\numberwithin{equation}{section}
\numberwithin{figure}{section}
\renewcommand{\a}{\alpha}
\renewcommand{\d}{\delta}
\newcommand{\g}{\gamma}
\newcommand{\e}{\varepsilon}
\newcommand{\f}{\varphi}
\newcommand{\s}{\sigma}
\renewcommand{\r}{\varrho}
\renewcommand{\t}{\tau}
\newcommand{\om}{\omega}
\newcommand{\D}{\Delta}
\newcommand{\Om}{\Omega}
\newcommand{\sets}[1]{\mathbb{#1}}
\newcommand{\CC}{\sets{C}}
\newcommand{\EE}{\sets{E}}
\newcommand{\NN}{\sets{N}}
\newcommand{\PP}{\sets{P}}
\newcommand{\QQ}{\sets{Q}}
\newcommand{\RR}{\sets{R}}
\newcommand{\nice}[1]{\mathcal{#1}}
\newcommand{\nB}{\nice{B}}
\newcommand{\nC}{\nice{C}}
\newcommand{\nD}{\nice{D}}
\newcommand{\nE}{\nice{E}}
\newcommand{\nF}{\nice{F}}
\newcommand{\nI}{\nice{I}}
\newcommand{\nK}{\nice{K}}
\newcommand{\nL}{\nice{L}}
\newcommand{\nM}{\nice{M}}
\newcommand{\nO}{\nice{O}}
\newcommand{\nS}{\nice{S}}
\newcommand{\nU}{\nice{U}}
\newcommand{\wt}[1]{\widetilde{#1}}
\newcommand{\wh}[1]{\widehat{#1}}
\newcommand{\arre}{\rightarrow}
\newcommand{\arrse}{\searrow}
\newcommand{\arrne}{\nearrow}
\newcommand{\dd}{\,\mathrm{d}}
\newcommand{\ee}{\mathrm{e}}
\newcommand{\ii}{\mathrm{i}}
\newcommand{\as}{\text{  a.s.}}
\newcommand{\id}{\mathrm{id}}
\newcommand{\cadlag}{c\`adl\`ag }
\newcommand{\conj}{\overline}
\newcommand{\<}{\langle}
\renewcommand{\>}{\rangle}
\newcommand{\cf}{\mathbbm{1}}
\newcommand{\ten}{\otimes}
\newtheoremstyle{flowstyle}{3pt}{3pt}{}{0pt}{\bf\sffamily}{. }{ }{}
\theoremstyle{flowstyle}
{
\newtheorem{theorem}{Theorem}
\newtheorem{corollary}[theorem]{Corollary}

\newtheorem{proposition}[theorem]{Proposition}
\newtheorem{lemma}[theorem]{Lemma}
\newtheorem{definition}[theorem]{Definition}
\newtheorem{example}[theorem]{Example}
\newtheorem{remark}[theorem]{Remark}
\newtheorem*{theorem*}{Theorem}
}
\numberwithin{theorem}{section}
\begin{document}

\title{L\'evy processes with values in locally convex Suslin spaces}

\author{Florian Baumgartner}

\thanks{2010 \emph{Mathematics Subject Classification:} Primary: 60B11. Secondary: 60G51  60G17  28C20.\\
\emph{Key words and phrases:} L\'evy processes, L\'evy-It\^o decomposition, locally convex space, Suslin space, stochastic process in infinite dimensions, Poisson integral, weak metric}

\maketitle

\begin{abstract}
We provide a L\'evy-It\^o decomposition of sample paths of L\'evy processes with values in complete locally convex Suslin spaces. This class of state spaces contains the well investigated examples of separable Banach spaces, as well as Fr\'echet or distribution spaces among many others. Sufficient conditions for the existence of a pathwise compensated Poisson integral handling infinite activity of the L\'evy process are given.
\end{abstract}

\section{Introduction}\label{sec:introduction}
The present paper is concerned with L\'evy processes with infinite-dimensional state spaces beyond Banach spaces. We assume the state space to be a \emph{complete locally convex Suslin space}. A fundemental result in the analysis of L\'evy processes is the decomposition of sample paths into independent diffusion and jump components. As a main result of this paper we obtain this so-called L\'evy-It\^o decomposition:
\begin{theorem*}[L\'evy-It\^o-decomposition]
Let $X$ be a L\'evy process in a locally convex Suslin space $E$ with characteristics $(\g,Q,\nu,K)$ and let the L\'evy measure $\nu$ be \emph{locally reducible} with reducing set $K$. Then there exist an $E$-valued Wiener process $(W_t)_{t\in T}$ with covariance operator $Q$, an independently scattered Poisson random measure $N$ on $T \times E$ with compensator $\lambda\ten \nu$ and a set $\Om_0\in \nF$ with $\PP(\Om_0)=1$ such that for all $\omega\in\Om_0$ one has
\begin{align}
X_t(\om)=\g t \;+ \;W_t(\om) \;+\!\! \int\limits_{[0,t]\times K}\!\!\!x\dd \wt{N}(s,x)(\om)\;+\!\!\int\limits_{[0,t]\times K^c}\!\!\! x\dd N(s,x)(\om)
\end{align}
for all $t\in T$. Furthermore, all the summands in \eqref{eq:levyitolcs} are independent and the convergence of the first integral in the sense of \eqref{eq:Jtdef} is a.s.\ uniform in $t$ on bounded intervals in $E_K$ and $E$.

Additionally, if the compact sets of $E$ admit a fundamental system of compact separable Banach disks, then there exists a separable compactly embedded Banach space $F\subseteq E$ such that the first three summands in the L\'evy-It\^o decomposition take values in $F$ a.s.
\end{theorem*}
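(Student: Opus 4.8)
The plan is to extract the jump structure directly from the sample paths of $X$, to define the Wiener part as the continuous remainder, and to verify the probabilistic structure by reduction to the one-dimensional projections, where the classical real-valued L\'evy--It\^o theory is available. Throughout I would use two structural features of a locally convex Suslin space: there is a countable family $\{f_n\}\subseteq E'$ separating the points of $E$, so that Borel measurability and the laws of $E$-valued random variables are determined by the projections $\langle f_n,\cdot\rangle$; and every Borel probability measure on $E$ is Radon, hence tight. I would first fix a \cadlag version of $X$ and define $N(\om,\cdot)$ on $T\times E$ as the counting measure of the jumps $(s,\D X_s(\om))$. That $N$ is an independently scattered Poisson random measure with compensator $\lambda\ten\nu$ I would prove by projection: each $\langle f_n,X\rangle$ is a real L\'evy process whose jump intensity is the image of $\nu$ under $f_n$, and the Poisson and independence properties lift to $T\times E$ through the separating family by a $\p$--$\lambda$ argument. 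Since $K$ is a reducing set one has $\nu(K^c)<\infty$, so for every $\om$ the large-jump term $\int_{[0,t]\times K^c}x\dd N(s,x)$ is a finite sum and defines an $E$-valued compound Poisson process.

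The analytic core is to give pathwise meaning to the first integral under possibly infinite jump activity. Local reducibility is exactly the hypothesis that produces the Banach space $E_K$ generated by the reducing set $K$, together with the second-moment control $\int_K\|x\|_{E_K}^2\dd\nu(x)<\infty$. I would approximate by compensated partial sums $J^n_t$ formed from jumps bounded away from the origin in $E_K$; each $J^n$ is a centred, square-integrable, independent-increment process in the separable Banach space $E_K$, so an It\^o--Nisio type theorem applies: $L^2$-boundedness and the martingale structure yield a.s.\ convergence of $J^n_\cdot$, uniformly in $t$ on bounded intervals, to a limit $J_\cdot$, first in the norm of $E_K$ and then, through the continuous inclusion $E_K\hookrightarrow E$, in $E$. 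This is the step I expect to be the main obstacle, both because one must verify that local reducibility really delivers the $E_K$-valued $L^2$ estimate uniformly along the approximation, and because the uniform-in-$t$ convergence has to be shown to persist under the passage from the norm topology of $E_K$ to the coarser topology of $E$.

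Writing $J_t:=\int_{[0,t]\times K}x\dd\wt N(s,x)$ and subtracting, I would define $W_t:=X_t-\g t-J_t-\int_{[0,t]\times K^c}x\dd N(s,x)$, so that the identity \eqref{eq:levyitolcs} holds by construction on the full-measure set $\Om_0$ on which all summands are defined. By the removal of its jumps, $W$ has continuous paths and inherits stationary independent increments, hence each $\langle f_n,W_t\rangle$ is a continuous real L\'evy process and therefore Gaussian; consequently $W$ is an $E$-valued Wiener process, and matching the second moments of the projections against the Gaussian part of the characteristic functional identifies its covariance as $Q$. The mutual independence of the four summands I would obtain by factoring the joint characteristic functional of the projections---using that the continuous martingale part and the purely discontinuous part of a L\'evy process decouple in the L\'evy--Khintchine exponent---and then transferring the factorisation to $E$ by means of the separating family $\{f_n\}$.

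For the additional assertion, the three relevant summands each induce a Radon law on $E$: the drift trivially, the compensated integral because it is $E_K$-valued, and the Wiener process because Gaussian laws on Suslin spaces are Radon. Viewing $(\g t+W_t+J_t)_{t\in[0,k]}$ as a random element of the path space over each bounded interval $[0,k]$ and using its tightness there, I obtain countably many compact subsets of $E$ that exhaust, up to a null set, the ranges of these three summands. Invoking the hypothesis that the compact subsets of $E$ possess a fundamental system of compact separable Banach disks, I would absorb this countable family of compacts into a single compact separable Banach disk $B$---constructed as the closed absolutely convex hull of a rapidly shrinking union $\bigcup_k\e_k B_k$ of fundamental disks, the scalars $\e_k$ chosen so that the hull remains compact---and set $F:=\mathrm{span}\,B$ with its Minkowski gauge. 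Then $F$ is a separable Banach space, the inclusion $F\hookrightarrow E$ is compact because $B$ is compact in $E$, and the first three summands take their values in $F$ almost surely; their measurability as $F$-valued maps follows from the Borel-isomorphism theory of Suslin spaces.
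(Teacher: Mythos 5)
There is a genuine gap at precisely the step you flag as the analytic core. You assert that local reducibility delivers the second-moment control $\int_K \|x\|_K^2 \dd\nu(x)<\infty$ in $E_K$ and then run an $L^2$-martingale/Doob argument. This is false: in a general separable Banach space the square-moment condition near the origin neither follows from, nor characterises, being a L\'evy measure (it is necessary only under cotype~2 and sufficient only under type~2; in spaces like $c_0$ there are L\'evy measures with $\int_{B_1}\|x\|^2\dd\nu=\infty$). What local reducibility actually gives, via Theorem~\ref{thm:restLevy}, is that $(\nu|_K)\big\|_{E_K}$ is a L\'evy measure on the separable Banach space $E_K$ in the shift-tightness sense, i.e.\ \emph{weak} convergence of the compensated partial sums $\sum_{n\le N} J([0,t]\times C_n)$ to $\wt{\ee}\big((\nu|_K)\|_{E_K}\big)$. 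The correct bridge to pathwise convergence is the It\^o--Nisio principle in its moment-free form --- convergence in distribution of sums of independent summands implies a.s.\ convergence --- which is exactly how the paper proceeds in Theorem~\ref{thm:comppoisint} (citing Heyer, Thm.~3.1.6, Dettweiler's argument for convergence in probability of the supremum, and a Banach-space version of Sato's Lemma~20.4 for the uniformity in $t$). Without the $L^2$ estimate your approximation scheme has no engine, so this step would fail as written.

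The construction of $F$ in the last paragraph also breaks down. Tightness of the laws on path space over $[0,k]$ gives, for each $\eta>0$, a compact set carrying the range up to probability $\eta$; letting $\eta\downarrow 0$ yields only a \emph{countable family} of compacts covering the range a.s., never a single one, and your plan to absorb $\bigcup_k \e_k B_k$ into one compact disk by shrinking scalars fails outside metrizable spaces: in $\nD(\RR^n)$ --- one of the paper's target examples possessing a fundamental system of compact separable Banach disks --- every bounded set lies in a single step $\nD_{K_m}$, so no choice of $\e_k$ makes the union bounded, let alone relatively compact, once the $B_k$ escape through the steps. The paper instead obtains a \emph{single} Banach disk of full measure from zero-one laws: Janssen's theorem (Theorem~\ref{thm:Banachsupportzeroone}) for the compensated Poisson part, Bogachev's Gaussian zero-one law together with the Fernique/Doob submartingale argument of Theorem~\ref{thm:BMinE} for the entire Wiener trajectory, and then $K_1+K_2+K_3\in\nK_0^s$ by Lemma~\ref{lem:sumsep}, with $F=E_{K_1+K_2+K_3}$. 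A secondary but real problem is your independence argument: since $\cf_K$ is not a cylindrical function of finitely many functionals, the summands $J$ and $L$ are not functionals of any finite-dimensional projection of $X$, so the factorisation of the joint characteristic functional does not reduce to the classical $\RR^d$ L\'evy--It\^o theorem; the paper sidesteps this by constructing on an auxiliary space a process $X'=J'+L'+W'+\g t$ whose summands are independent by construction and transferring the joint law through the path space $\nD(T;E)$ (Propositions~\ref{prop:orginialprocesses} and~\ref{prop:independence}), which is also where the measurability of the jump-counting measure --- requiring the weak-metric machinery of Section~\ref{sec:cadlagfunctionsinsuslinspaces} that your sketch passes over --- is established.
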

A compact separable Banach disk $K$ is a compact set such that its linear hull is a separable Banach space with respect to the closed unit ball $K$. The crucial notion of local reducibility and the precise definition of all terms in the theorem will be given and investigated in this paper.

This theorem extends and unifies almost all known results of L\'evy processes with values in topological vector spaces. L\'evy processes with values in $\RR^d$ \cite{sato:99,applebaum:09} and in Banach spaces \cite{stochintlevyitoBtype2,levystochbanachappleb,stochintlevybanach,itoformulapoissonrmbanach,dettweiler} have been well-established but beyond the Banach space case little is known.

Nevertheless, much can be found regarding more general stochastic processes or special cases of L\'evy processes in topological vector spaces: Wiener processes with values in locally convex spaces \cite{feyel_delapradelle_95,bogachev} or Markov processes in completely regular Suslin spaces \cite{schwartz_markov} were considered; K.\ It\^o presents a theory of stochastic processes with values in distribution spaces in order to solve abstract Cauchy problems, cf. \cite{ito:84}.  In different settings, SPDEs with solution processes in nuclear and duals of nuclear spaces have been investigated by various authors, cf.\ \cite{kallianpurxiong,kallianpurxionglecture,stochintnuclear,kallperez_SEENuclearSpace,kallperez_weakconvergence,bojdecki_gorostiza_langevin,bojdecki_jakubowski_Girsanov,kumar_currfluc,goros_navarro_rodrigues_fluctuations}. \smallskip \\
Returning to L\'evy processes, \"Ust\"unel presented a L\'evy-It\^o decomposition in a class of Suslin nuclear duals of nuclear spaces \cite{addprocessesnuclear}. However, the decomposition had some shortcomings, as missing independence of components and an $L^2$-converging integral not allowing to claim a desireable pathwise decomposition. Recently, C.\ Fonseca Mora was able to both enlarge the class of state spaces by new methods, cf.\ \cite{fonsecamora:15} and prove a satisfactory L\'evy-It\^o decomposition for duals of reflexive nuclear spaces.\smallskip\\
In this paper, a different approach based on the works of Dettweiler and Tortrat on infinitely divisible Radon measures on locally convex spaces, cf.\ \cite{dettweilerbadrikianisch,tortrat69}, allows to drop any nuclearity assumptions on the state space $E$. This extends and unifies the results of \"Ust\"unel (in the nuclear setting) and Dettweiler \cite{dettweiler} (for Banach spaces).

For example, this approach allows to treat a L\'evy process in the locally convex direct sum $L^p([0,1])\oplus \nD(\RR^n)$ which is obviously neither a Banach nor a nuclear space but a locally convex Suslin. Also, projecting on the components does not work in general as they might not be independent (as in the finite-dimensional case). So, even in this simple example, a unified approach is necessary. 

The main technique is the reduction of the small jumps to a compactly embedded separable Banach subspace $E_K$ of $E$ -- if possible, i.e., if the L\'evy measure $\nu$ is \emph{locally reducible}, cf.\ Definition~\ref{def:locallyreducible}. This will guarantee a.s.\ uniform convergence of a Poisson integral representing the small jumps of a L\'evy process:

\begin{theorem*}
Let $\nu$ be locally reducible with reducing set $K$. For $t\in T$, the quantity
\begin{align}
J_t:=\int_{(0,t]\times K} x\dd \wt{N}(s,x):=\sum_{n=1}^\infty  J([0,t]\times C_n)
\end{align}
is a series of independent random variables in $E_K=\bigcup_{n\in\NN} n K$ and converges almost surely in $E_K$ and $E$. The convergence is uniform in $t$ on bounded intervals of $T$. Finally, $(J_t)_{t\in T}$ is a \cadlag L\'evy process in $E$ with characteristics $(0,0,\nu|_{K},K)$.
\end{theorem*}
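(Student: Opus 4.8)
The plan is to realise $(J_t)_{t\in T}$ as the almost surely uniform-in-$t$ limit of the truncated processes $S_m(t):=\sum_{n=1}^{m}J([0,t]\times C_n)$ and to transfer the desired structural properties from these truncations to the limit. First I would note that, since the $C_n$ are pairwise disjoint with $\nu(C_n)<\infty$, each summand $J([0,t]\times C_n)$ is a compensated Poisson integral over a region of finite activity; it is therefore a bona fide $E_K$-valued (hence $E$-valued) random variable, and in $t$ a \cadlag L\'evy process whose characteristic exponent is governed by $\nu|_{C_n}$. Independence of the family $\{J([0,t]\times C_n)\}_{n\in\NN}$ for each fixed $t$ is immediate from the independently scattered property of $N$ applied to the disjoint sets $[0,t]\times C_n$.

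The heart of the matter is convergence. For fixed $t$, the partial sum $S_m(t)$ is a sum of independent, centred, $E_K$-valued random variables whose accumulated L\'evy measure is $t\,\nu|_{\bigcup_{n\le m}C_n}$, increasing to $t\,\nu|_K$. The hypothesis of local reducibility (Definition~\ref{def:locallyreducible}) is precisely what guarantees that $t\,\nu|_K$ is the L\'evy measure of an infinitely divisible Radon law on the separable Banach space $E_K$; consequently the laws of $S_m(t)$ converge weakly to that law. Because $E_K$ is a separable Banach space and the summands are independent, the It\^o--Nisio theorem upgrades this convergence in distribution to almost sure convergence, so $S_m(t)\to J_t$ almost surely in $E_K$, and the continuity of the embedding $E_K\hookrightarrow E$ yields the same in $E$.

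To make the convergence uniform in $t$ I would fix a bounded interval $[0,T_0]\subseteq T$ and apply a maximal inequality of L\'evy--Ottaviani type to the increment process $t\mapsto S_{m'}(t)-S_m(t)$, which again has independent increments in $E_K$. This bounds $\PP\bigl(\sup_{t\le T_0}\|S_{m'}(t)-S_m(t)\|_K>2\e\bigr)$ by a fixed multiple of the endpoint tail $\PP\bigl(\|S_{m'}(T_0)-S_m(T_0)\|_K>\e\bigr)$, which tends to $0$ by the previous step. A standard Cauchy/subsequence argument then promotes this to almost sure uniform convergence on $[0,T_0]$. Since a uniform almost sure limit of \cadlag paths is itself \cadlag, $(J_t)$ has \cadlag paths; stationarity and independence of increments as well as stochastic continuity pass to the limit, and the characteristic functional of $J_t$ is obtained as the limit of the compound-Poisson exponents of $S_m(t)$, giving the characteristics $(0,0,\nu|_K,K)$.

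The step I expect to be the main obstacle is precisely this uniform control in $t$: because $E_K$ is merely a separable Banach space and not a Hilbert space, the convenient $L^2$ Doob inequality is unavailable, so one is forced to rely on maximal inequalities valid for processes with independent increments in arbitrary Banach spaces and then to combine them with the It\^o--Nisio theorem in order to pass from convergence in probability of the suprema to almost sure uniform convergence. Correctly orchestrating these two ingredients in the \cadlag, infinite-dimensional setting is the technical core of the argument.
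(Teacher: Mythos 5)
Your proposal follows the paper's proof of Theorem~\ref{thm:comppoisint} essentially step for step: reduction to the separable Banach space $E_K$ via local reducibility (Theorem~\ref{thm:restLevy}), weak convergence of the laws of the partial sums to $\wt{\ee}\big((\nu|_K)\big\|_{E_K}\big)$, an It\^o--Nisio-type upgrade from convergence in distribution to a.s.\ convergence of the independent summands, an Ottaviani-type maximal inequality to control $\sup_{t}$ of the tails, and transfer to $E$ through the continuous embedding $\imath\colon E_K\hookrightarrow E$ using $p(\imath(x))\leq c_p\|x\|_K$. The only cosmetic difference is in the last step: where you invoke a Cauchy/subsequence argument, the paper applies the Banach-space version of \cite[Lemma~20.4]{sato:99} (itself proved via the same maximal inequality), which yields a.s.\ uniform convergence of the full sequence directly, without subsequences.
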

In Section~\ref{sec:separablebanachspaces} we investigate this reduction technique and characterise locally reducible L\'evy measures (Theorem~\ref{thm:restLevy}). It should be pointed out that this result is even a little better than in \cite[Proof of Theorem~2.1]{dettweiler}, as the series is converging a.s.\ uniformly without any subsequence arguments.

On the other hand, a zero-one law for generalised Poisson exponentials by Janssen \cite{janssen_01} will allow us to impose a simple and natural condition to the state space obtaining the following useful result:
\begin{theorem*}
If the compact sets of $E$ admit a fundamental system of compact separable Banach disks, then every L\'evy measure on $E$ is locally reducible.
\end{theorem*}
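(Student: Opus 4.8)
The plan is to pass from the L\'evy measure $\nu$ to its generalised Poisson exponential $\mu$ --- the pure-jump infinitely divisible Radon probability measure on $E$ whose L\'evy--Khintchine representation has L\'evy measure $\nu$ --- and to locate a single compact separable Banach disk that carries $\mu$. The idea is that \emph{carrying} $\mu$ is, via the characterisation of locally reducible L\'evy measures in Theorem~\ref{thm:restLevy}, exactly what reducibility demands. So the burden is shifted from constructing a convergent integral directly to a soft concentration statement about $\mu$, for which Janssen's zero--one law is tailor-made.

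First I would exploit the Suslin structure: every finite Borel measure on $E$ is Radon and concentrated on a $\sigma$-compact set, so $\mu$ lives on $\bigcup_n C_n$ with $C_n$ compact and increasing. Using the hypothesis, each $C_n$ sits inside a compact separable Banach disk; replacing these disks by the closed absolutely convex hulls of their finite unions --- which are again compact, since $E$ is complete, and again separable Banach disks, since their spans are finite sums of separable Banach spaces --- I obtain an increasing sequence of compact separable Banach disks $K_n$ with $\bigcup_n K_n \supseteq \bigcup_n C_n$. Each $E_{K_n} = \bigcup_m m K_n$ is $\sigma$-compact, hence Borel, and $\mu\!\left(\bigcup_n E_{K_n}\right) = 1$.

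The main step is Janssen's zero--one law for generalised Poisson exponentials \cite{janssen_01}: applied to the measurable subgroups $E_{K_n}$ it yields $\mu(E_{K_n}) \in \{0,1\}$ for every $n$. Since the $E_{K_n}$ increase to a set of full measure, continuity from below gives $\mu(E_{K_n}) \to 1$, so $\mu(E_K) = 1$ for $K := K_n$ with $n$ large. Thus a single compact separable Banach disk carries the entire generalised Poisson exponential.

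It remains to read $\mu(E_K) = 1$ as local reducibility with reducing set $K$, and this is where I expect the real work. Because $E_K$ is a separable Banach space continuously, indeed compactly, embedded in $E$, its Borel structure agrees with the trace Borel structure, so $\mu$ restricts to a Radon measure on $E_K$; the same zero--one law applied to the convolution roots $\mu_{1/n}$, whose L\'evy measure is $\nu/n$, shows that each is carried by $E_K$ as well, so $\mu$ is infinitely divisible on $E_K$ with L\'evy measure $\nu|_{E_K}$. The Banach-space L\'evy--Khintchine theory then identifies $\nu|_{E_K}$ as a genuine L\'evy measure on $E_K$, which by Theorem~\ref{thm:restLevy} is precisely the condition for $\nu$ to be locally reducible with reducing set $K$. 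The delicate point is this transfer of infinite divisibility and of the L\'evy measure from $E$ to the finer Banach topology on $E_K$ --- ensuring the characteristic functionals and the compensation match up --- rather than the covering and zero--one arguments that precede it.
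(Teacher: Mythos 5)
There is a genuine gap, and it sits at the heart of your argument: the application of Janssen's zero--one law to the subspaces $E_{K_n}$. Theorem~\ref{thm:janssen} does \emph{not} assert $\mu(H)\in\{0,1\}$ for every measurable linear subspace $H$; it requires the hypothesis $\nu(H^c)=0$. Your disks $K_n$ are chosen only to cover a $\sigma$-compact carrier of $\mu=\wt{\ee}(\nu)$, and nothing forces the L\'evy measure $\nu$ to vanish off $E_{K_n}$ for any fixed $n$. Without that hypothesis the dichotomy is simply false: take $\nu=\delta_{x_0}$ with $x_0\notin H$; then $\wt{\ee}(\nu)=\ee(\nu)=\ee^{-1}\sum_n \delta_{n x_0}/n!$ gives $\mu(H)=\ee^{-1}\in(0,1)$. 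So in your scheme continuity from below does give $\mu(E_{K_n})\arrne 1$, but each term may lie strictly between $0$ and $1$, and no single $n$ with $\mu(E_{K_n})=1$ need exist. Covering a $\sigma$-compact carrier of $\nu$ as well would not repair this: with $K_1\supseteq K_0$ (where $\nu(K_0^c)<\infty$ by Lemma~\ref{lem:cpK}) one gets $\nu(E_{K_n}^c)\arrse 0$, but the zero--one law needs $\nu(E_{K_n}^c)=0$ \emph{exactly}, at a finite stage, which a decreasing limit does not provide.

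The paper's proof (Proposition~\ref{prop:Banachsupportzeroone}) is built precisely to make Janssen's hypothesis hold on the nose: first split off the big jumps by fixing a compact absolutely convex $K_0$ with $\nu(K_0^c)<\infty$ and pass to $\wt{\ee}(\nu|_{K_0})$; then apply the fundamental-system hypothesis to the single compact set $K_0$ to get $K_2\in\nK_0^s$ with $K_0\subseteq K_2$, so that $\nu|_{K_0}$ vanishes off $E_{K_2}$ identically. Positivity is obtained from Radonness via one compact separable Banach disk $K_1$ with $\wt{\ee}(\nu|_{K_0})(K_1)>0$ (no increasing sequence is needed), and $K:=K_1+K_2\in\nK_0^s$ by Lemma~\ref{lem:sumsep}; Janssen then yields $\wt{\ee}(\nu|_{K_0})(E_K)=1$. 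Finally one must upgrade to $\wt{\ee}(\nu|_K)(E_K)=1$ --- this, together with $\nu(K^c)<\infty$ (a condition your $K$ is never shown to satisfy, though it would follow a posteriori from Banach-space theory once $\mu(E_K)=1$ is known), is what Definition~\ref{def:locallyreducible} actually demands --- and the paper does so via $\wt{\ee}(\nu|_K)=\ee(\nu|_{K\setminus K_0})*\wt{\ee}(\nu|_{K_0})$, the finite factor being carried by $E_K$ since $K\subseteq E_K$. Your closing transfer paragraph (roots via Lemma~\ref{lem:infinitedivsubspace}, uniqueness of the L\'evy measure, Theorem~\ref{thm:restLevy}) is sound in outline and close to the paper's mechanism, but it rests entirely on $\mu(E_K)=1$, which your argument does not establish.
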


This property is simpler to check and satisfied e.g.\ by separable Fr\'echet and all common distribution spaces.\smallskip\\
Another peculiarity happens taking into account the possibly uncountable neighbourhood bases of $E$. Measurability problems in connection with limits can be overcome introducing a weak metric on $E$ and exploiting the Suslin property of $E$ which is essential in this approach, cf.\ Section~\ref{sec:cadlagfunctionsinsuslinspaces}. \medskip\\
The paper is organised as follows. After some preliminaries on spaces and measures, vector valued L\'evy processes will be defined in Section~\ref{sec:levyprocesses}. Section~\ref{sec:separablebanachspaces} is dedicated to the reduction of the small jumps to the Banach subspace, Section~\ref{sec:cadlagfunctionsinsuslinspaces} deals with \cadlag functions with Suslin space values, Section~\ref{sec:brownianmotion} with Wiener processes. The main results are Theorem~\ref{thm:comppoisint} in Section~\ref{sec:jumpprocesses} and the L\'evy-It\^o decomposition, Theorem~\ref{thm:levyito}, in Section~\ref{sec:levyito}.

\section{Preliminaries}\label{sec:preliminaries}
\subsection{Spaces.}
Throughout this paper, we will make the following assumption on the state space $E$ (unless explicitly stated differently):
\begin{enumerate}[label=(S\arabic*)]
\item $E$ is a real locally convex space,
\item $E$ complete, and
\item $E$ is a Suslin space.
\end{enumerate}
A complete locally convex space has the property that every Cauchy net has a limit. A \emph{Suslin space} is a Hausdorff (topological) space which is a surjective continuous image of a polish space. We denote by $E'$ the topological dual of $E$ and $\nB(E)$ is the Borel-$\s$-algebra of $E$. The cylindrical $\s$-algebra $\nE(E)$ is generated by the elements of $E'$. The above assumptions guarantee the following frequently used properties: $E$ is separable, Hausdorff and completely regular (i.e.\ a point and a closed set can be separated by a continuous function), $\nE(E)=\nB(E)$ and $(E,\nB(E))$ is a measurable vector space, i.e., addition and scalar multiplication are measurable. Furthermore, there exists a sequence of elements in $E'$ separating the points of $E$ and therefore one has a continuous metric on $E$. 

The following essential result is due to L.~Schwartz:
\begin{proposition}[Corollary 2,~p.~101 of {\cite{schwartz_radon}}]\label{prop:comparablesuslin}
Let $\t_1$ and $\t_2$ two comparable Suslin topologies on a set $F$. Then, the respective Borel-$\s$-algebras coincide.
\end{proposition}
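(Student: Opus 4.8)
The plan is to reduce the statement to two ingredients: the stability of the class of Suslin subsets under refinement of the topology, and the descriptive-set-theoretic characterisation of Borel sets in a Suslin space as exactly the \emph{bi-Suslin} sets (sets that are Suslin together with their complement). Since $\t_1$ and $\t_2$ are comparable, by symmetry I may assume $\t_1\subseteq\t_2$, so that $\t_2$ is the finer topology. Then every $\t_1$-open set is $\t_2$-open, whence $\nB_{\t_1}\subseteq\nB_{\t_2}$ at once; the entire content of the proposition is the reverse inclusion $\nB_{\t_2}\subseteq\nB_{\t_1}$.

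The first step is the observation that the identity map $\id\colon(F,\t_2)\to(F,\t_1)$ is continuous, because $\t_2$ is finer. Recall that a subset $S\subseteq F$ is a \emph{Suslin subset} of $(F,\t_i)$ if it is the image of a Polish space under a $\t_i$-continuous map. Hence, if $S$ is $\t_2$-Suslin, witnessed by a continuous $g\colon P\to(F,\t_2)$ with $g(P)=S$, then $\id\circ g\colon P\to(F,\t_1)$ is continuous with the same image, so $S$ is also $\t_1$-Suslin. In other words, refining the topology can only enlarge the collection of Suslin subsets, and in particular every $\t_2$-Suslin set is $\t_1$-Suslin.

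The second step is to invoke the characterisation valid in an arbitrary Suslin space: a subset is Borel if and only if both it and its complement are Suslin. One direction is soft: open and closed subsets of a Suslin space are Suslin, the bi-Suslin sets are closed under complementation by symmetry and, using that Suslin sets are stable under countable unions and intersections, under countable unions as well; thus the bi-Suslin sets form a $\s$-algebra containing the open sets, hence all Borel sets, so every Borel set is bi-Suslin. The reverse direction is Lusin's separation theorem: two disjoint Suslin sets can be separated by a Borel set, so a Suslin set $S$ with Suslin complement satisfies $S\subseteq C\subseteq S$ for some Borel $C$, forcing $S=C$ to be Borel. This separation theorem is the genuinely hard ingredient; in the framework underlying Schwartz's corollary, however, it is an already-established result that I would cite rather than reprove.

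Finally I would combine the two steps. Let $B\in\nB_{\t_2}$. Since $(F,\t_2)$ is a Suslin space, both $B$ and $F\sm B$ are $\t_2$-Suslin. By the monotonicity of Suslin subsets under refinement, both $B$ and $F\sm B$ are then $\t_1$-Suslin. Since $(F,\t_1)$ is itself a Suslin space, the bi-Suslin characterisation yields $B\in\nB_{\t_1}$. This proves $\nB_{\t_2}\subseteq\nB_{\t_1}$, and together with the trivial inclusion the two Borel-$\s$-algebras coincide. The only delicate point to watch is that all notions (Suslin subset, bi-Suslin characterisation, Lusin separation) are available for possibly non-metrizable Suslin spaces, which is precisely the generality in Schwartz's treatment that I would rely upon.
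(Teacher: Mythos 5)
Your proof is correct, and it follows essentially the same route as the source: the paper itself states this proposition without proof, citing Corollary~2, p.~101 of Schwartz \cite{schwartz_radon}, where the result is obtained exactly as you do it --- the identity map from the finer to the coarser topology makes every Suslin subset for the finer topology Suslin for the coarser one, and the bi-Suslin characterisation of Borel sets (resting on the Lusin separation theorem, which does hold for possibly non-metrizable Hausdorff spaces in Schwartz's framework, as you rightly flag) then transfers Borel sets downward, the reverse inclusion being trivial. No gaps to report.
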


\subsection{Measures.}
The set of (nonnegative) Borel measures on $E$ is denoted by $\nM(E)$, its subset of bounded, resp.\ probability measures by $\nM^b(E)$, $\nM^1(E)$, respectively. A measure $\mu\in\nM^b(E)$ is called Radon, if for $B\in\nB(E)$ and $\e>0$ there exists a compact set $K\subseteq B$ such that $\mu(B\setminus K)<\e$.
Every finite Borel measure on any Suslin space is Radon \cite[Thm. 10, p. 122]{schwartz_radon}. The Fourier transform of a measure $\mu\in\nM^b(E)$ is given by
$$\wh{\mu}\colon\,E'\rightarrow \CC\qquad \wh{\mu}(a):=\int_E \ee^{\ii\<x,a\>}\dd \mu(x).$$
Any two finite measures with equal Fourier transform coincide, cf.\ \cite[Theorem~2.2,~p.~200]{vakhania_tarieladze}. For $\mu,\nu\in\nM(E)$ the convolution is defined by $$\mu*\nu(B)=(\mu\ten\nu)_\a(B)=\mu\ten\nu(\a^{-1}(B)),$$
where $\a: E\times E\arre E,\,(x,y)\mapsto x+y$, cf.\ \cite[Appendix,~p.\ 373f]{bogachev}. \sloppypar
For $A\in\nB(E)$, the restriction of a measure $\mu\in\nM(E)$ to $A$ is defined by $\mu|_A(B):=\mu(B\cap A)$, $B\in\nB(E)$. Note that $\mu|_A\in\nM(E)$. If $\mu|_A$ is only considered on $\nB(A)$, we denote it by $\mu\|_A\in\nM(A)$.
\subsection{Infinitely divisible measures.}
A measure $\mu\in\nM^1(E)$ is called \emph{Poissonian} provided that there exists $\nu\in\nM^b(E)$  such that its Fourier transform satisfies
$$\wh{\mu}(a)=\ee^{\int_E \left(\ee^{\ii\<x,a\>}-1\right)\dd \nu(x)}.$$
For a measure $\nu\in\nM^b(E)$ its \emph{Poisson exponential} can be defined by
\begin{align}\label{eq:PoissonSeries}
\ee(\nu):=\ee^{-\nu(E)}\sum_{n=0}^\infty \frac{\nu^{*n}}{n!},
\end{align}
with a setwise converging series. In this case, $\ee(\nu)$ is indeed a Poisson\-ian measure with associated measure $\nu$, cf.\  \cite[p.~288]{dettweilerbadrikianisch}.
A measure $\r\in\nM^1(E)$ is called \emph{Gaussian} if for any $a\in E'$ the measure $\r\circ a^{-1}$ is Gaussian. A measure $\mu\in \nM^1(E)$ is called \emph{infinitely divisible} provided that for every $n\in \NN$ there exists a measure $\mu_n\in\nM^1(E)$ such that $\mu=\mu_n^{*n}$. Gaussian and Poisson measures are infinitely divisible. The set $\nI(E)$ of infinitely divisible measures on $E$ is closed in $\nM^1(E)$  in the weak topology, cf.\ \cite[Korollar~1.10]{dettweilerbadrikianisch}.

A set $M\subseteq \nM^b(E)$ of finite measures on $E$ is \emph{uniformly tight} if $\sup_{\mu \in M} \mu(E) <\infty$ and if for all $\e>0$ there exists a compact set $K\subseteq E$ such that $\mu(K^c)<\e$ for all $\mu\in M$.
The set $M$ is called \emph{shift tight}, if for every $\mu\in M$ there exists an $x_\mu\in E$ such that the familiy $(\mu*\d_{x_\mu})_{\mu\in M}$ is uniformly tight.\sloppypar
The following definition is from \cite{dettweilerbadrikianisch,tortrat69}.
\begin{definition}
A measure $\nu\in\nM(E)$ is called \emph{L\'evy measure} if it satisfies
\begin{enumerate}
\item $\nu(\{0\})=0$;
\item  there exists an upwards directed set of finite measures (w.r.t.\ $\leq$) $\{\mu_i\in\nM^b(E) : i\in I\}$ with $\mu_i\leq \nu$ and $\sup_i \mu_i=  \nu$ (setwise) and such that the family of Poisson measures $(\ee(\mu_i))_{i\in I}$ is shift tight.
\end{enumerate}
\end{definition}
If $E$ is a separable Banach space, this definition coincides with the following characterisation of a L\'evy measure: $\nu(\{0\})=0$, $\nu(B_\d^c)<\infty$ for all $\d>0$ and where $B_\d$ is the ball with radius $\d$, and for each positive sequence $\d_n\arrse 0$, the set $\{\ee(\nu|_{B_{\d_n}^c}),n\in\NN\}$ is shift tight, cf.\ \cite[Theorem~3.4.9]{heyer}. The above definition is thus indeed an extention of well-known concepts.\sloppypar
There exists a \emph{L\'evy-Khintchine-decomposition} of infinitely divisible measures:
\begin{theorem}[Dettweiler, {\cite[Satz~2.5]{dettweilerbadrikianisch}}]\label{thm:levy_khintchine} For $\mu\in \nI(E)$ there exist $\g\in E$, a linear symmetric and positive definite operator $Q\colon E'\arre E$, an absolutely convex and compact set $K\subseteq E$ and a L\'evy measure $\nu$ such that the characteristic function of $\mu$ has the form
\begin{align*}
\wh{\mu}(a) = \exp \left(\ii \<\g,a\>-\frac{1}{2}\<Qa,a\>+\!\int_E\big(\ee^{\ii\<x,a\>}\!-1-\ii\<x,a\>\cf_K(x)\big)\!\dd \nu(x)\right)
\end{align*}
for every $a\in E'$. $\nu$ and $Q$ are uniquely determined by $\mu$, $\g$ is unique after the choice of $K$.\\
Conversely, every measure $\mu\in\nM^1(E)$ with a Fourier transform of this type is infinitely divisible.
\end{theorem}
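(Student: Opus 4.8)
The plan is to reduce the statement to the classical finite-dimensional L\'evy--Khintchine theorem on $\RR^d$ and then to lift the resulting family of characteristics back to $E$, using the Radon property that the Suslin hypothesis guarantees together with the shift tightness built into the notion of a L\'evy measure. The closedness of $\nI(E)$ in the weak topology, already recorded above, will carry the converse direction.

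First I would treat the forward implication. Fix $\mu\in\nI(E)$ and, for each $n$, a root $\mu_n\in\nM^1(E)$ with $\mu_n^{*n}=\mu$. Pushing forward along an arbitrary finite family $a_1,\dots,a_d\in E'$, i.e.\ along $\pi=(a_1,\dots,a_d)\colon E\to\RR^d$, gives $\mu\circ\pi^{-1}=(\mu_n\circ\pi^{-1})^{*n}$, which is infinitely divisible on $\RR^d$. The finite-dimensional theorem then supplies characteristics $(\g_\pi,Q_\pi,\nu_\pi)$ with $Q_\pi$ a positive semidefinite matrix and $\nu_\pi$ a L\'evy measure on $\RR^d$. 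These are consistent under the linear maps induced by inclusions of finite subsets of $E'$: the covariances $Q_\pi$ patch into a symmetric positive definite form on $E'\times E'$, and the $\nu_\pi$ form a projective system of Radon measures on the finite-dimensional quotients, so together they determine a cylindrical set function.

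The central task is to show that this cylindrical data is induced by genuine objects on $E$: an operator $Q\colon E'\to E$, an absolutely convex compact truncation set $K$, a drift $\g\in E$, and above all a single Radon L\'evy measure $\nu$. Here I would invoke the accompanying-Poisson structure of infinite divisibility: the uniform infinitesimality of the array $(\mu_n)$ lets one replace $\mu=\mu_n^{*n}$ by the Poisson exponentials $\ee(n\mu_n)$ up to shifts, so that the shift tightness required in the definition of a L\'evy measure is exactly what controls the family of finite measures $(n\mu_n|_{\{0\}^c})$ obtained by removing mass near $0$, and forces their setwise limit $\nu$ to be tight. Since (S3) makes every finite Borel measure on $E$ automatically Radon, each finite restriction of $\nu$ to a set bounded away from $0$ is Radon, and these piece together to a $\sigma$-finite Radon L\'evy measure; completeness (S2) then yields the limiting drift $\g\in E$ once $K$ is fixed. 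Uniqueness of $Q$ and $\nu$ follows by projecting back to each $\RR^d$ and using finite-dimensional uniqueness (finite measures being determined by their Fourier transforms), while $\g$ is pinned down only after the choice of $K$, through the centering term $\ii\<x,a\>\cf_K(x)$. The hard part will be precisely this promotion of the consistent cylindrical family to a countably additive Radon measure on $E$, and the clean separation, in the limit of $n\mu_n$, of the Gaussian contribution concentrating near $0$ from the genuine jump contribution.

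For the converse, given characteristics $(\g,Q,\nu,K)$ of the stated form, I would exhaust $\nu$ by an upwards directed family $\mu_i\uparrow\nu$ as in the definition of a L\'evy measure and exhibit $\mu$ as a weak limit of convolutions of the Gaussian law with covariance $Q$ and drift $\g$ against the accompanying Poisson measures $\ee(\mu_i)$, suitably recentered by the truncation over $K$. Each such convolution is infinitely divisible; shift tightness provides a convergent subnet, and since $\nI(E)$ is closed in the weak topology the limit $\mu$ is again infinitely divisible. Matching Fourier transforms on all of $E'$ then identifies $\mu$ as the measure with the prescribed characteristic function, completing the equivalence.
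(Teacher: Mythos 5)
A preliminary remark: the paper does not prove this theorem at all --- it is imported wholesale from Dettweiler \cite[Satz~2.5]{dettweilerbadrikianisch}, so the only meaningful benchmark is Dettweiler's original argument. Your outline does echo its general shape (reduction to the infinitesimal array $(\mu_n)$ with $\mu_n^{*n}=\mu$, accompanying Poisson laws $\ee(n\mu_n)$, generalized Poisson exponentials, and the weak closedness of $\nI(E)$ for the converse), but as a proof it has genuine gaps, the central one of which you flag yourself without closing it.

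Concretely: (i) your appeal to ``the shift tightness built into the notion of a L\'evy measure'' to control the family $(n\mu_n|_{\{0\}^c})$ is circular --- at that stage of the forward direction no L\'evy measure exists yet, and shift tightness is a property of the very object being constructed. What is actually available is only that $\mu$ is Radon (from the Suslin hypothesis), and the analytic core of Dettweiler's proof is precisely to deduce from $\mu=\mu_n^{*n}$ the shift tightness of the roots and then of the accompanying laws $\ee(n\mu_n)$, by symmetrization and concentration-function arguments going back to Tortrat \cite{tortrat69} and Siebert \cite{Siebert}. Without this step your ``promotion of the consistent cylindrical family to a countably additive Radon measure'' cannot work: $E$ is not assumed nuclear, so no Minlos--Sazonov-type theorem is available, and a consistent cylindrical family on such a space in general does \emph{not} extend to a Radon measure --- the tightness must be produced by hand, and your sketch defers exactly that. (ii) The finite-dimensional L\'evy measures $\nu_\pi$ are consistent only modulo mass at the origin (points of $\ker \pi$ are crushed to $0$), a wrinkle your patching argument ignores. (iii) The separation of the Gaussian contribution from the small-jump contribution in the limit of $n\mu_n$, and the existence of the drift $\g\in E$ once $K$ is fixed (which uses completeness nontrivially), are likewise named by you as ``the hard part'' and left unproved. (iv) In the converse you convolve against ``the Gaussian law with covariance $Q$'', but for a general symmetric positive $Q\colon E'\arre E$ no Radon Gaussian measure need exist on an infinite-dimensional $E$ (the identity on a Hilbert space is the standard counterexample); its existence must be extracted from the hypothesis that a probability measure with the full Fourier transform exists, via a factorization step your sketch does not supply --- the Poissonian factor alone is covered by the generalized Poisson exponential, as in Corollary~\ref{cor:levymeas}. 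The uniqueness argument via projections to $\RR^d$ and the use of the closedness of $\nI(E)$ are sound, but they are the easy parts; what remains open in your proposal is the substance of Dettweiler's theorem.
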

One says that $\mu$ has \emph{characteristics} $(\g,Q,\nu,K)$ if $\mu$ admits the above decomposition. The covariance operator is symmetric in the sense that $\<Qa,b\>=\<Qb,a\>$ for $a,b\in E'$.

\begin{lemma}[Dettweiler \cite{dettweilerbadrikianisch}, Lemma 1.5/Proof of Satz~2.5] \label{lem:cpK}
Let $\mu\in \nI(E)$. Then there exists a unique L\'evy measure $\nu$ associated to $\mu$ and some absolutely convex and compact set $K$ such that $\nu(K^c)<\infty$.
\end{lemma}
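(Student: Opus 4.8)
Existence and uniqueness of the associated L\'evy measure $\nu$ are already contained in the L\'evy--Khintchine decomposition (Theorem~\ref{thm:levy_khintchine}): it produces, for $\mu\in\nI(E)$, a L\'evy measure $\nu$, and asserts that $\nu$ is uniquely determined by $\mu$. Hence the only genuinely new assertion is the existence of an \emph{absolutely convex} compact set $K$ with $\nu(K^c)<\infty$. This is a property of $\nu$ alone, which I would extract directly from the shift-tightness built into the definition of a L\'evy measure, so that no appeal to the analytic form of the characteristic function is needed.

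The plan is to use the defining upwards directed family $\{\mu_i : i\in I\}$ with $\mu_i\le\nu$ finite, $\sup_i\mu_i=\nu$ setwise, and $(\ee(\mu_i))_{i\in I}$ shift tight. Writing $\mu_i^{-}(B):=\mu_i(-B)$ for the reflection and $\mu_i^{\mathrm{s}}:=\mu_i+\mu_i^{-}$ for the symmetrised intensity, one has $\ee(\mu_i^{\mathrm{s}})=\ee(\mu_i)*\ee(\mu_i)^{-}$, so that symmetrisation cancels the shifts and turns the shift-tight family into a \emph{uniformly tight} family $(\ee(\mu_i^{\mathrm{s}}))_{i\in I}$ of \emph{symmetric} Poisson measures. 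From uniform tightness I would select a symmetric compact set of small tail and pass to its closed absolutely convex hull, which is compact because $E$ is complete (assumption (S2)); this yields an absolutely convex compact $C$ with $\ee(\mu_i^{\mathrm{s}})(C^c)<\tfrac14$ for all $i$.

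The quantitative heart of the argument is the comparison of the tail of a symmetric Poisson measure with the tail of its intensity. Concretely, for a \emph{finite} symmetric measure $\sigma$ and an absolutely convex set $C$ I would prove
\begin{equation*}
\ee(\sigma)(C^c)\;\ge\;\tfrac12\bigl(1-\ee^{-\sigma((2C)^c)}\bigr).
\end{equation*}
Since $\sigma$ is finite, $\ee(\sigma)$ is the law of the sum $S$ of the (a.s.\ finitely many) points of a Poisson random measure with intensity $\sigma$; conditioning on the number $n\ge1$ of these points lying in $(2C)^c$, such points are i.i.d.\ with symmetric law, and writing $S=\xi+R$ with $\xi$ one of them and $R$ symmetric and independent, the convex--symmetric dichotomy $\cf_{C^c}(\xi+R)+\cf_{C^c}(\xi-R)\ge1$ (valid whenever $\xi\notin 2C$, for otherwise $\xi=\tfrac12[(\xi+R)+(\xi-R)]\in C\subseteq 2C$) together with $\xi-R\overset{d}{=}\xi+R$ forces the conditional probability $\ge\tfrac12$. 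Applying this with $\sigma=\mu_i^{\mathrm{s}}$ and the set $C$ above, the bound $\ee(\mu_i^{\mathrm{s}})(C^c)<\tfrac14$ gives $\mu_i^{\mathrm{s}}((2C)^c)<\ln 2$, hence $\mu_i((2C)^c)\le\mu_i^{\mathrm{s}}((2C)^c)<\ln 2$ uniformly in $i$. Taking the setwise supremum over the directed family yields $\nu((2C)^c)=\sup_i\mu_i((2C)^c)\le\ln 2<\infty$, so $K:=2C$ is the required absolutely convex compact set.

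I expect the main obstacle to be the reflection inequality, and more precisely making the conditioning and the symmetry of $R$ rigorous at the level of the Poisson random measure; the finiteness of each $\mu_i^{\mathrm{s}}$ is exactly what legitimises conditioning on the finite point count and keeps the bound $\ln 2$ independent of $i$, so that the possibly infinite total mass of $\nu$ causes no degradation. The remaining technical points---uniform tightness of the symmetrisation and compactness of the closed absolutely convex hull---are standard given completeness and the Radon property of finite measures on Suslin spaces.
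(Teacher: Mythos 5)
Your proposal is correct, but note that the paper has no proof of Lemma~\ref{lem:cpK} to compare against: it is quoted verbatim from Dettweiler (Lemma~1.5/proof of Satz~2.5 of \cite{dettweilerbadrikianisch}). What you supply is a self-contained argument for the only substantive claim---finiteness of $\nu$ outside an absolutely convex compact set---derived purely from the paper's definition of a L\'evy measure, via symmetrisation of the shift-tight family $(\ee(\mu_i))_{i\in I}$ and the reflection inequality $\ee(\sigma)(C^c)\ge\tfrac12\bigl(1-\ee^{-\sigma((2C)^c)}\bigr)$ for finite symmetric $\sigma$ and absolutely convex $C$. The steps check out: $\ee(\mu_i^{\mathrm{s}})=\ee(\mu_i)*\ee(\mu_i)^-$ (with $\ee(\mu_i)^-$ the image under $x\mapsto -x$) cancels the shifts and yields uniform tightness; the closed absolutely convex hull of a compact set is compact by completeness, which the paper itself invokes via \cite[Proposition~6.7.2]{jarchow} in the proof of Lemma~\ref{lem:Lshift}; the conditioning on the Poisson count in $(2C)^c$ is legitimate because each $\mu_i^{\mathrm{s}}$ is finite, the restrictions of the Poisson random measure to the disjoint symmetric sets $(2C)^c$ and $2C$ are independent, and symmetry of $\mu_i^{\mathrm{s}}$ makes $R$ symmetric and independent of $\xi$, so the dichotomy $\cf_{C^c}(\xi+R)+\cf_{C^c}(\xi-R)\ge 1$ for $\xi\notin 2C$ forces the conditional probability $\ge\tfrac12$. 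The uniform bound $\mu_i((2C)^c)\le\mu_i^{\mathrm{s}}((2C)^c)<\ln 2$ then passes to $\nu$ through the setwise supremum in the definition, so $K:=2C$ serves. This is the classical symmetrisation route for such finiteness statements (cf.\ the Banach-space case in \cite{heyer}) and is very likely close to Dettweiler's own argument. One architectural caveat: you take existence and uniqueness of $\nu$ from Theorem~\ref{thm:levy_khintchine}, which is legitimate within this paper since both results are quoted externally; but in Dettweiler's original development the lemma feeds into the proof of Satz~2.5, so reconstructing the source this way would be circular---your finiteness argument, fortunately, uses only the definition of a L\'evy measure and is free of that issue.
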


\begin{definition}
Let $\nu$ be a L\'evy measure and $x_\mu\in E$ be chosen in a way such that the family $(\ee(\mu) * \delta_{x_{\mu}})_{\mu\leq \nu}$  is uniformly tight. The \emph{generalised Poisson exponential} $\wt{\ee}(\nu)$ is an accumulation point of this family.
\end{definition}
The generalised Poisson exponential is unique up to translations \cite[p.\ 288]{dettweilerbadrikianisch}. We will sometimes use $\wt{\ee}(\nu)$ in relations like
$$\wt{\ee}(\nu) = \wt{\ee}(\nu|_K)*\ee(\nu|_{K^c})$$
meaning that choosing certain representatives of the generalised Poisson exponential, the equality holds up to a convolution with a Dirac measure on one side.

By Prokhorov's theorem, cf.\ \cite[Theorem~I.3.6]{vakhania_tarieladze}, the mentioned family is weakly relatively compact which justifies the definition as an accumulation point. We will use the fact that for a L\'evy measure, its generalised Poisson exponential $\wt{\ee}(\nu)$ is infinitely divisible. This follows from $\nI(E)$ being closed in $\nM^1(E)$.

\begin{corollary}\label{cor:levymeas}
For a measure $\nu\in\nM(E)$ the following assertions are equivalent:
\begin{enumerate}
\item $\nu$ is a L\'evy measure on $E$.
\item There exists a measure $\e\in\nM^1(E)$ such that its Fourier transform equals
$$\int_E\ee^{\ii\<x,a\>}\dd\e(x) = \exp \left( \int_E \big(\ee^{\ii \<x,a\>} -1 - \ii \<x,a\>\cf_K(x)\big)\dd \nu(x)\right),\quad a\in E'$$
for some absolutely convex and compact set $K\subseteq E$.
\end{enumerate}
\end{corollary}
\begin{proof}
(2) $\Longrightarrow$ (1): If $\wh{\e}$ has the above form, $\e\in\nI(E)$ and $\nu$ is the unique L\'evy measure corresponding to $\e$ by Theorem~\ref{thm:levy_khintchine}.\\
(1) $\Longrightarrow$ (2): If $\nu$ is a L\'evy measure, it follows from the proof of the L\'evy-Khintchine decomposition that there exists a generalised Poisson exponential $\e$ with the given Fourier transform (end of point 1 in the proof of Satz 2.5, \cite{dettweilerbadrikianisch}, where this measure is called $\nu_0*\ee(F_0)$).
\end{proof}

\section{L\'evy processes in locally convex spaces}\label{sec:levyprocesses}
An $E$-valued \emph{random vector} $X$ on a probability space $(\Om,\nF,\PP)$ is a measurable map $X\colon (\Om,\nF)\rightarrow (E,\nB(E)).$ An $E$-valued stochastic process $(X_t)_{t\in T}$ is a collection of $E$-valued random vectors over the parameter space $T=[0,t_{\max} ]$ with $t_{\max}>0$ or $T=[0,\infty)$. If there is no risk of confusion, we will omit the emphasis that a process is $E$-valued and call it simply a stochastic process.

A family $(\mu_t)_{t\in T}$ of probability measures on $(E,\nB(E))$ is called a \emph{convolution semigroup}, if $\mu_{t+s}=\mu_t*\mu_s$ for all $s,t,s+t\in T$ and $\mu_0=\d_0$. It is \emph{weakly continuous}, if $\mu_t$ converges to $\delta_0$ for $t\arrse 0$ in the weak topology of measures.

\begin{definition}[L\'evy processes]
An $E$-valued stochastic process $(X_t)_{t\in T}$ on a probability space $(\Om,\nF,\PP)$ with distributions $\mu_t:=\PP_{X_t}$ is a \emph{L\'evy process}, if
\begin{enumerate}[label=(L\arabic*)]
\item $X_t-X_s$ is independent of $\nF_s:=\s(X_r: r\leq s)$ for any $0\leq s<t$;
\item the distributions of $X_{t+s}-X_t$ and $X_s$ are equal for all $t$;
\item $X_0=0$ a.s.; and
\item \label{ax:wcsg} the family $(\mu_t)_{t\in T}$ is a weakly continuous convolution semigroup.
\end{enumerate}
\end{definition}
If $E=\RR^d$, the above definition yields the notion of a \emph{L\'evy process in law}, cf.\ \cite[Definition~1.6]{sato:99}, where \ref{ax:wcsg} is substituted by the equivalent property (cf.\ \cite[Proposition~1.4.1]{applebaum:09}) of stochastic continuity.

\begin{proposition}\label{prop:semigroup}
Let $\mu\in\nI(E)$.
\begin{enumerate}
\item The $n$-th root $\mu_{1/n}\in \nM^1(E)$ is unique.
\item Let $\eta\colon E'\arre \CC$ be the characteristic exponent of $\mu$ in Theorem~\ref{thm:levy_khintchine}. Then, $\wh{\mu_t}(a)=\ee^{t\eta(a)}$, $a\in E'$, $t\in T$.
\item There is a unique convolution semigroup $\mu_t:=\mu^{*t}$ embedded into $\mu$ such that $\mu_1=\mu$.

\end{enumerate}
\end{proposition}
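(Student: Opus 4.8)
The plan is to read everything off the L\'evy--Khintchine representation of Theorem~\ref{thm:levy_khintchine}, which gives $\wh{\mu}(a)=\ee^{\eta(a)}$ with a single-valued exponent $\eta\colon E'\arre\CC$; in particular $\wh{\mu}$ never vanishes. For part (1) I would prove uniqueness of the $n$-th root by one-dimensional projection. Fix $a\in E'$ and push $\mu$ forward by $a$: the law $\mu\circ a^{-1}\in\nM^1(\RR)$ is infinitely divisible with characteristic function $s\mapsto\ee^{\eta(sa)}$, and $s\mapsto\eta(sa)$ is continuous with value $0$ at $s=0$ (it is the distinguished logarithm of the $1$-dimensional infinitely divisible law). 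If $\rho\in\nM^1(E)$ satisfies $\rho^{*n}=\mu$, then $\wh{\rho}(sa)^n=\ee^{\eta(sa)}$, so $h(s):=\wh{\rho}(sa)\,\ee^{-\eta(sa)/n}$ obeys $h(s)^n=1$. Since $h$ is continuous, $h(0)=1$, and the $n$-th roots of unity are discrete, connectedness of $\RR$ forces $h\equiv1$. Evaluating at $s=1$ gives $\wh{\rho}(a)=\ee^{\eta(a)/n}$ for every $a\in E'$, so $\rho$ is determined by its Fourier transform; thus $\mu_{1/n}$ is unique with $\wh{\mu_{1/n}}(a)=\ee^{\eta(a)/n}$.

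For parts (2) and (3), for $q=m/n\in\QQ\cap T$ I would set $\mu_q:=(\mu_{1/n})^{*m}$; the identity $\wh{\mu_q}(a)=\ee^{q\eta(a)}$ together with uniqueness of Fourier transforms shows this is independent of the representation of $q$, yields the semigroup law $\mu_p*\mu_q=\mu_{p+q}$ on rationals, and (since $\mu_q=(\mu_{q/k})^{*k}$ for every $k\in\NN$) shows every $\mu_q$ is infinitely divisible. To extend to real $t$ I would fix an integer $N\geq t$ and note that for rational $q\in[0,N]$ one has $\mu^{*N}=\mu_q*\mu_{N-q}$, so the $\mu_q$ are convolution factors of the fixed Radon measure $\mu^{*N}$; by shift-tightness of the factors of a Radon measure and Prokhorov's theorem (Theorem~I.3.6 of \cite{vakhania_tarieladze}) the family $\{\mu_q\}$ is relatively weakly compact. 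Any accumulation point $\mu_t$ of $\mu_{q_k}$ along rationals $q_k\to t$ has Fourier transform $\lim_k\ee^{q_k\eta(a)}=\ee^{t\eta(a)}$, which by uniqueness of Fourier transforms does not depend on the sequence; hence $\mu_{q_k}\to\mu_t$ weakly, $\wh{\mu_t}(a)=\ee^{t\eta(a)}$ (this is (2)), and $\mu_t\in\nI(E)$ by closedness of $\nI(E)$. The semigroup identity and $\mu_0=\d_0$ pass to the limit through the Fourier transforms, and weak continuity at $0$ follows from $\ee^{t\eta(a)}\to1$ as $t\arrse0$ together with the established tightness. Finally, if $(\lambda_t)_{t\in T}$ is any weakly continuous convolution semigroup with $\lambda_1=\mu$, then $\lambda_{1/n}$ is an $n$-th root of $\mu$, so $\lambda_{1/n}=\mu_{1/n}$ by (1) and $\lambda_q=\mu_q$ for all rational $q$; weak continuity then gives $\lambda_t=\mu_t$ for all $t$, which is the uniqueness asserted in (3).

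The delicate point is the passage from rational to real $t$: pointwise convergence of characteristic functions does not by itself yield a limiting measure in infinite dimensions, so one must produce genuine weak compactness. I would obtain it from the observation that the $\mu_q$ are convolution factors of the single Radon measure $\mu^{*N}$ and that such factors are shift-tight; controlling the centering shifts, so as to get convergence to $\mu_t$ itself rather than merely up to translation, is the part requiring care. It is here that completeness and the Suslin property of $E$ enter, through the availability of Prokhorov's theorem and the Radon property of every finite Borel measure on $E$.
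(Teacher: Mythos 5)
Part (1) of your proposal is correct and is a close cousin of the paper's argument: the paper projects via finitely many functionals $(a_1,\ldots,a_d)$ and invokes uniqueness of $n$-th roots in $\nI(\RR^d)$, while you project onto lines and run the roots-of-unity/connectedness argument with the distinguished logarithm; both work, modulo the small point that continuity of $s\mapsto\eta(sa)$ should be checked from the representation in Theorem~\ref{thm:levy_khintchine} (it does hold, and the paper uses the same one-dimensional identification $\wh{\mu^{*q}}(sa)=\ee^{q\eta(sa)}$). The rational-power bookkeeping, the identification of Fourier transforms, and your uniqueness argument in (3) (roots force agreement at rationals, continuity extends it) are also fine.

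The gap is in the passage from rational to real $t$, and it is exactly the step you flag but do not close. Your assertion that ``by shift-tightness of the factors of a Radon measure and Prokhorov's theorem the family $\{\mu_q\}$ is relatively weakly compact'' is a non sequitur: shift-tightness only yields relative compactness of the recentered family $\{\mu_q*\d_{x_q}\}$, not of $\{\mu_q\}$ itself (a family of Dirac measures $\d_{z_q}$ with $z_q$ escaping every compact set is shift-tight). Consequently your accumulation points have Fourier transform $\ee^{t\eta(a)}$ only up to an unknown character factor, and in this generality one cannot simply recenter: a pointwise limit of the characters $\ee^{\ii\<x_k,a\>}$ need not be of the form $\ee^{\ii\<x,a\>}$ with $x\in E$, since a complete locally convex Suslin space need not be semi-reflexive, so the weakly Cauchy sequence of shifts need not converge in $E$. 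Everything downstream --- the existence of $\mu_t$, assertion (2), and weak continuity at $0$ in (3) --- rests on this unproven tightness. The paper avoids the issue entirely by citing Siebert \cite[Satz~6.4]{Siebert}, whose embedding theory on topological groups gives \emph{uniform tightness of the unshifted family} $(\mu^{*q})_{q\in\QQ,\,t\leq q\leq t_0}$ directly (exploiting that the rational powers form a commuting divisible system, not merely arbitrary convolution factors), and then applies \cite[Theorem~IV.3.1]{vakhania_tarieladze} to upgrade pointwise convergence of the Fourier transforms plus relative compactness to weak convergence of the net $\mu^{*q}$, $q\arrse t$. To repair your proof you would need either such a citation or an actual control of the shifts (e.g.\ via symmetrization together with a recentering argument using the semigroup structure); as written, the crucial compactness claim fails.
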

\begin{proof}
(1):  On a locally convex Suslin space, $\nE(E)=\nB(E)$, therefore $\mu$ is uniquely determined on the generating $\pi$-system of cylindrical sets of the form $C=(a_1^{-1},\ldots,a_d^{-1})(B)$ with $B\in\nB(\RR^d)$ and  $a_1,\ldots,a_d\in E'$. So, let $\r_1$ and $\r_2$ be two different $n$-th roots of $\mu$. Then, there exist $a_1,\ldots,a_d\in E'$ such that $\r_1\circ (a_1,\ldots,a_d)^{-1}\neq \r_2\circ (a_1,\ldots,a_d)^{-1}$. These are two different $n$-th roots of $\mu\circ (a_1,\ldots,a_d)^{-1}\in \nI(\RR^d)$, a contradiction.\\
(2) and (3): From (1) it follows that $\mu_q:=\mu^{*q}$ is unique and definable for all rational $q$ by setting $\mu^{*q_1/q_2}:=\mu_{1/q_2}^{*q_1}$.
For $a\in E'$, the measure $\mu^{*q}_a:=\mu^{*q}\circ a^{-1}$ is infinitely divisible, and from the one-dimensional case it follows that its Fourier transform is $\f_a^q(u)=\ee^{q \eta_a(u)}=\ee^{q\eta(ua)}$, where $\eta\colon E'\arre \CC$ is the characteristic exponent of $\mu$, i.e.\ $\wh{\mu}(a)=\ee^{\eta(a)}$. This follows from $\nu_a$ being a L\'evy measure on $\RR$ (cf.\ \cite[Theorem~3.4.9]{heyer}).

For $q\arrse t$, the Fourier transforms $\f_a^q(u)\arre \f_a^t(u)$ for all $u\in \RR$, which yields (2). In other words, $\wh{\mu^{*q}}(a)\arre \wh{\mu^{*t}}(a)$ for all $a\in E'$. The family $(\mu^{*q})_{q\in\QQ,\,t\leq q\leq t_0}$ is uniformly tight by \cite[Satz~6.4]{Siebert} and  therefore weakly relatively compact by Prokhorov's theorem cf.\ \cite[Theorem~I.3.6]{vakhania_tarieladze}. This is sufficient to apply \cite[Theorem~IV.3.1]{vakhania_tarieladze} and obtain weak convergence of the net $\mu^{*q}, q\in \QQ,q\geq t$ for  $q\arrse t$.
\end{proof}
The previous proposition allows to keep the characteristics of a L\'evy process over time. The following existence theorem is proved in \cite{baumgartner:phd:15} and relies on the results of \cite{schwartz_markov}.
\begin{proposition}\label{prop:existence_levy_process1}
Let $(\mu_t)_{t\in T}$ be a weakly continuous convolution semigroup on $E$. Then, there exists a L\'evy process $(X_t)_{t \in T}$ on some probability space $(\Omega,\nF,\PP)$ with values in $E$ and such that $\PP_{X_t}= \mu_t$ for all $t\in T$.
Furthermore, one can choose a probability space such that the set of c\'adl\'ag paths has probability one.
\end{proposition}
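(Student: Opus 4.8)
The plan is to build the process through its finite-dimensional distributions via Kolmogorov's extension theorem, and then to obtain the \cadlag statement by recognising the process as a homogeneous Markov process and appealing to the regularisation theory for Suslin-valued Markov processes. First I would fix $t_1<\dots<t_n$ in $T$ and declare the law of $(X_{t_1},\dots,X_{t_n})$ to be the pushforward of the product measure $\mu_{t_1}\ten\mu_{t_2-t_1}\ten\cdots\ten\mu_{t_n-t_{n-1}}$ on $E^n$ under the continuous, hence measurable, partial-sum map $(y_1,\dots,y_n)\mapsto(y_1,\,y_1+y_2,\,\dots,\,y_1+\dots+y_n)$. The convolution-semigroup property $\mu_{s+t}=\mu_s*\mu_t$ together with $\mu_0=\d_0$ yields the Kolmogorov consistency relations: dropping a coordinate $t_j$ replaces the consecutive factors $\mu_{t_j-t_{j-1}}*\mu_{t_{j+1}-t_j}$ by $\mu_{t_{j+1}-t_{j-1}}$, which is precisely the law of the merged increment.

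Next I would invoke the Kolmogorov extension theorem. Because $E$ is Suslin, each marginal $\mu_t$ is Radon, so the projective family consists of Radon measures whose compact inner regularity supplies the tightness required to form the projective limit on the path space $(E^T,\nB(E)^{\ten T})$. This yields a probability space $(\Om,\nF,\PP)$ carrying a process $(X_t)_{t\in T}$ with the prescribed finite-dimensional laws, in particular $\PP_{X_t}=\mu_t$. The L\'evy axioms are then immediate: the product structure of the finite-dimensional distributions gives the independence (L1) of $X_t-X_s$ from $\s(X_r:r\le s)$; stationarity (L2) and $X_0=0$ a.s.\ (L3) follow from $X_{t+s}-X_t\sim\mu_s$ and $\mu_0=\d_0$; and (L4) is the hypothesis.

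The genuinely hard part is the \cadlag statement. I would recast $(X_t)$ as a homogeneous Markov process with transition kernel $P_t(x,B)=\mu_t(B-x)=(\mu_t*\d_x)(B)$; weak continuity of the semigroup then reads $P_t(x,\cdot)\arre\d_x$ as $t\arrse 0$, i.e.\ stochastic continuity, and the completely regular Suslin structure of $E$ places the process exactly in the setting of Schwartz's theory of Markov processes on Suslin spaces \cite{schwartz_markov}. Invoking the corresponding regularisation theorem, as carried out in \cite{baumgartner:phd:15}, produces a modification with \cadlag paths off a $\PP$-null set, which one then removes by restricting the space. The main obstacle here is measurability: with the possibly uncountable neighbourhood base of $E$ the set of \cadlag paths need not belong to the product $\s$-algebra, so the regularisation has to be performed with respect to the continuous weak metric induced by a countable point-separating family in $E'$. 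Only afterwards does one transfer right-continuity and the existence of left limits back to the original topology, using completeness of $E$ to guarantee that the weak limits are genuine limits in $E$ and Proposition~\ref{prop:comparablesuslin} to identify the weak and original Borel structures, so that the regularised object is again a bona fide $E$-valued process. Reconciling control of the jumps in the weak metric with their existence in the complete topology is exactly where the Suslin and completeness assumptions do the real work.
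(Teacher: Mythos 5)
Your proposal is correct and takes essentially the approach the paper itself indicates: the paper defers the proof to \cite{baumgartner:phd:15}, where the process is constructed from its finite-dimensional distributions (Kolmogorov extension, made available by the Radon property of Borel measures on Suslin spaces) and the \cadlag statement is obtained from Schwartz's regularisation theory for Markov processes on completely regular Suslin spaces \cite{schwartz_markov}, exactly as you outline. The measurability issues you flag --- working with a weaker continuous metric and using completeness together with the coincidence of Borel structures (Proposition~\ref{prop:comparablesuslin}) to return to the original topology --- are likewise the concerns the paper itself addresses in Section~\ref{sec:cadlagfunctionsinsuslinspaces}.
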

From now on, we assume that an $E$-valued L\'evy process $X=(X_t)_{t\in T}$ is always given on a complete probability space $(\Omega,\nF,\PP)$ such that almost all paths of $X$ are \cadlag.\medskip\\
Let $\Om_0:=\{\om\in \Om\colon t\mapsto X_t(\om)\text{ is \cadlag}\!\!\}$. For $\om\in\Om_0$ we define
$$X_{t-}(\om):=\lim_{s\nearrow t} X_s(\om)\quad \text{and} \quad \D X_t(\om):=X_t(\om)-X_{t-}(\om).$$
For $\om\in \Om_0^c$, one sets $\Delta X_t(\om):=0$ for all $t\in T$.

\section{Separable Banach subspaces}\label{sec:separablebanachspaces}
In this section, suitable conditions on the L\'evy measure, allowing a reduction of the small jumps part to a Banach subspace, will be investigated.  \smallskip\\
For the functional analytic background of the following cf.\ \cite{jarchow}. A disk in a locally convex space is a bounded and absolutely convex set. We define the sets $\nB_0(E,\t)$ resp.\ $\nK_0(E,\t)$ of closed and compact disks, respectively. If there is no risk of confusion we omit the dependency on $E$ or its topology. For $B\in \nB_0$ (and therefore $\nK_0$) the linear hull
$$E_B:=\bigcup_{n\in\NN} n\cdot B$$
is a Banach space with respect to the gauge function $\|x\|_B:=\inf \{\rho>0 \colon x\in \rho\cdot B\}$, $x\in E_B.$
By boundedness of $B$ in $E$,  the canonical injection $\imath\colon E_B\hookrightarrow E$ is continuous (for $B\in\nK_0$ even a compact mapping).

\subsection{Measures on different underlying topologies}
We give some lemmas on certain types of measures with respect to different topologies.
\begin{lemma}\label{lem:gaussianmeasurecomparable}
Let $\t$ denote the given topology on $E$ and let $\t'$ be another comparable locally convex Suslin topology on $E$.  $\t$ and $\t'$ need not be complete. A measure $\r$ is a Gaussian measure with respect to $(E,\t)$ if and only if it is a Gaussian measure with respect to $\t'$.
\end{lemma}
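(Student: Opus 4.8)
The plan is to use the symmetry of the statement and assume without loss of generality that $\t'$ is coarser than $\t$; the identity map $(E,\t)\to(E,\t')$ is then continuous, so $E'_{\t'}\subseteq E'_{\t}$. Before anything else I would record that, $\t$ and $\t'$ being comparable Suslin topologies, Proposition~\ref{prop:comparablesuslin} yields $\nB_\t(E)=\nB_{\t'}(E)=:\nB(E)$, so speaking of ``$\r$ a Borel measure'' is unambiguous and, both topologies being Suslin, $\r$ is automatically Radon for each of them. With this in hand the implication ``$\t$-Gaussian $\Rightarrow$ $\t'$-Gaussian'' is immediate: if $\r\circ a^{-1}$ is Gaussian for every $a\in E'_{\t}$, then it is Gaussian in particular for every $a$ in the smaller set $E'_{\t'}$.

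The real content is the converse, where $\r$ is assumed $\t'$-Gaussian and I must show $\r\circ a^{-1}$ is Gaussian for \emph{every} $a\in E'_{\t}$, including functionals that are $\t$-continuous but not $\t'$-continuous. The key step is measure-theoretic rather than topological: such an $a$ is $\t$-continuous, hence $\nB_\t(E)=\nB(E)$-measurable, and is therefore a $\r$-measurable linear functional on the space $(E,\t')$ which carries the Radon Gaussian measure $\r$. I would then invoke the structure theory of Radon Gaussian measures: every $\r$-measurable linear functional coincides $\r$-almost everywhere with an element of the closure of $E'_{\t'}$ in $L^2(E,\r)$, and this closure consists of Gaussian random variables because an $L^2$-limit of Gaussians is Gaussian (cf.\ \cite{bogachev}). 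Hence $\r\circ a^{-1}$ is Gaussian, as required.

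Two points I would treat with care. First, the cleanest form of the measurable-linear-functional theorem is for centered measures, so I would reduce to that case by symmetrisation: the measure $\wt{\r}:=\r*(\r\circ(-\id)^{-1})$ is centered and still $\t'$-Gaussian (each one-dimensional projection is the convolution of an $N(m,\s^2)$ with an $N(-m,\s^2)$, namely $N(0,2\s^2)$). Applying the theorem to $\wt{\r}$ gives that $\wt{\r}\circ a^{-1}=(\r\circ a^{-1})*\overline{(\r\circ a^{-1})}$ is Gaussian, and Cram\'er's decomposition theorem then forces $\r\circ a^{-1}$ itself to be Gaussian. Second, I would point out that completeness of $\t$ or $\t'$ is never used---only the Suslin property enters, through Proposition~\ref{prop:comparablesuslin} and the resulting automatic Radon property---so the stated hypotheses suffice. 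I expect the main obstacle to be precisely the bridge in the second paragraph: arguing that a functional continuous only for the finer topology may nonetheless be fed into the Gaussian measure attached to the coarser topology. The resolution is to avoid any topological approximation of $a$ and instead rely on the coincidence of the two Borel structures together with the fact that measurable linear functionals of a Gaussian measure are themselves Gaussian.
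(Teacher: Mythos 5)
Your proof is correct, but it follows a genuinely different route from the paper. The paper's proof is a two-line, purely Borel-structural argument: by \cite[Proposition~2.2.10]{bogachev}, Gaussianity of $\rho$ is equivalent to the rotation invariance $(\rho\otimes\rho)\circ\psi^{-1}=\rho$ for the maps $\psi(x,y)=x\sin\varphi+y\cos\varphi$, a condition formulated entirely in terms of the $\sigma$-algebra $\nB(E_\tau)=\nB(E_{\tau'})$ (Proposition~\ref{prop:comparablesuslin}); since the two Borel structures coincide, both directions follow at once, with no WLOG on which topology is finer and no appeal to the Radon property beyond the Suslin hypothesis --- the same pattern the paper uses for Poisson measures in Lemma~\ref{lem:poissonmeasurecomparable}. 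You instead split the equivalence: the coarser-to-finer direction is trivial from $E'_{\tau'}\subseteq E'_{\tau}$, and the hard direction is handled by the automatic-Gaussianity theorem for measurable linear functionals of a centered Radon Gaussian measure (\cite[\S 2.10]{bogachev}), with symmetrisation and Cram\'er's decomposition theorem to reduce to the centered case. This is heavier machinery (the structure theory of $\rho$-measurable linear functionals and Cram\'er's theorem are both deep), but it buys something the paper's argument does not: you actually show that every $\tau$-continuous --- indeed every Borel-measurable --- linear functional agrees $\rho$-a.e.\ with an element of the $L^2(\rho)$-closure of $E'_{\tau'}$, which locates $E'_{\tau}$ inside the Gaussian structure of $(E,\tau')$. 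A further small point in your favour: the rotation-invariance characterisation as invoked in the paper is stated for centered measures, whereas your symmetrisation/Cram\'er step treats a general mean explicitly. Your preliminary observations (coincidence of Borel $\sigma$-algebras, automatic Radon property on Suslin spaces, measurability of addition and of $-\mathrm{id}$ so that $\wt{\rho}$ is well defined) are all justified by facts recorded in Section~\ref{sec:preliminaries}, so the argument is complete as written.
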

\begin{proof}

Let $\r$ be Gaussian on $\nB(E_\t)=\nE(E,E'_\t)$. By \cite[Proposition~2.2.10]{bogachev}, this is equivalent to the statement that for the map $\psi\colon X\times X\arre X$, defined by $(x,y)\mapsto x \sin \f + y \cos \f$ it holds that $(\r\ten \r)(\psi^{-1}(B))=\r(B)$ for all $B\in \nB(E_{\t})=\nB(E_{\t'})=\nE(E,E'_{\t'})$. This means, $\r$ is Gaussian on $E_{\t'}$.
\end{proof}
\begin{lemma}\label{lem:poissonmeasurecomparable}
Let $\t$ denote the given topology on $E$ and let $\t'$ be another comparable locally convex Suslin topology on $E$.  $\t$ and $\t'$ need not be complete. A measure $\mu\in\nM^1(E)$ is a Poisson measure with respect to $\t$ if and only if it is a  Poisson measure with respect to $\t'$.
\end{lemma}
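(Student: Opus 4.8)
The plan is to mirror the proof of Lemma~\ref{lem:gaussianmeasurecomparable} and reduce the Poisson property to a statement about Fourier transforms that does not see the difference between the two topologies. The key observation is Proposition~\ref{prop:comparablesuslin}: since $\t$ and $\t'$ are comparable Suslin topologies on the same underlying set, their Borel $\s$-algebras coincide, so $\nB(E_\t)=\nB(E_{\t'})$ and hence $\nM^1(E)$ and $\nM^b(E)$ do not depend on which of the two topologies we use. Thus a candidate associated measure $\nu\in\nM^b(E)$ is a finite Borel measure for one topology precisely when it is for the other.

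First I would recall the definition: $\mu\in\nM^1(E)$ is Poissonian with respect to a topology exactly when there is a finite measure $\nu$ with $\wh{\mu}(a)=\exp\big(\int_E(\ee^{\ii\<x,a\>}-1)\dd\nu(x)\big)$ for all $a$ in the corresponding dual. The subtlety is that the dual spaces $E'_\t$ and $E'_{\t'}$ may differ, so the Fourier transforms are a priori being compared on different sets of functionals. The main step is therefore to show that the relation characterising a Poissonian measure, once it holds on one dual, forces the corresponding identity on the other dual as well. Concretely, suppose $\mu$ is Poissonian on $(E,\t)$ with associated $\nu$; I would show that $\nu$ also realises $\mu$ as Poissonian on $(E,\t')$, and by symmetry the converse. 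Since $\nu$ is a finite Borel measure on the common $\s$-algebra, the integral $\int_E(\ee^{\ii\<x,a\>}-1)\dd\nu(x)$ makes sense for every $a\in E'_{\t'}$, because such an $a$ is $\nB(E)$-measurable (elements of either dual are continuous, hence Borel, for their own topology, and by the coincidence of Borel $\s$-algebras they are measurable for the common structure).

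The crux of the argument is identifying the two Fourier transforms. Here I would use the uniqueness theorem for Fourier transforms (the cited \cite[Theorem~2.2,~p.~200]{vakhania_tarieladze}): two finite measures with equal Fourier transforms coincide. Define the two candidate Poisson measures $\ee(\nu)$ via the setwise convergent series \eqref{eq:PoissonSeries}; this construction is purely measure-theoretic and uses only $\nB(E)$ and convolution, so it yields \emph{the same} measure $\ee(\nu)\in\nM^1(E)$ irrespective of the topology. Because $\mu$ is Poissonian on $(E,\t)$ with associated $\nu$, one has $\mu=\ee(\nu)$ as measures on $\nB(E)$ (up to the translation built into the definition), and this single measure then automatically satisfies the defining Fourier-transform identity on $E'_{\t'}$, exhibiting $\mu$ as Poissonian for $\t'$ with the same associated measure $\nu$. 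The symmetric argument gives the reverse implication.

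I expect the main obstacle to be the bookkeeping around the two distinct duals and making precise that the Poisson exponential $\ee(\nu)$ is a genuinely topology-independent object. The series \eqref{eq:PoissonSeries} is defined setwise through convolution powers $\nu^{*n}$, and convolution is defined via the addition map $\a\colon E\times E\arre E$; one must check $\a$ is Borel measurable for the common Borel structure, which follows from $(E,\nB(E))$ being a measurable vector space as recorded in the preliminaries (and this property is stable under passing to a comparable Suslin topology by Proposition~\ref{prop:comparablesuslin}). Once this identification is in place the Fourier-transform identities transfer for free, so the analytic content is light and the work is essentially verifying that every ingredient in the definition of ``Poissonian'' is insensitive to replacing $\t$ by the comparable $\t'$.
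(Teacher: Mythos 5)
Your proposal is correct and follows essentially the same route as the paper, whose proof is exactly this: by Proposition~\ref{prop:comparablesuslin} the Borel $\s$-algebras of the two comparable Suslin topologies coincide, and the Poisson exponential defined through the setwise converging series \eqref{eq:PoissonSeries} depends only on that common Borel structure, so being Poissonian transfers between $\t$ and $\t'$; your additional bookkeeping (Fourier-transform uniqueness to identify $\mu=\ee(\nu)$, Borel measurability of functionals in either dual, measurability of addition) just makes explicit what the paper leaves implicit. One small correction: the definition of a Poissonian measure involves no translation, so your parenthetical ``up to the translation built into the definition'' is unnecessary --- translations enter only for \emph{generalised} Poisson exponentials.
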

\begin{proof}
As recalled above, the Borel-$\s$-algebras of both topological spaces coincide and one obtains the result by considering the setwise converging series \eqref{eq:PoissonSeries} which only depends on the Borel structure.
\end{proof}

\begin{lemma}\label{lem:levymeasurecomparable}
Let $\t$ denote the given topology on $E$ and let $\t'$ be another comparable locally convex Suslin topology on $E$.  A measure $\nu$ is a L\'evy measure with respect to $(E,\t)$ if and only if it is a L\'evy measure with respect to $\t'$.
\end{lemma}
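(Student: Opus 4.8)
The plan is to prove Lemma~\ref{lem:levymeasurecomparable} by unwinding the definition of a L\'evy measure and showing that each ingredient is insensitive to the change of topology, relying on the two preceding lemmas. Recall that $\nu$ is a L\'evy measure if (i) $\nu(\{0\})=0$ and (ii) there exists an upwards directed family $\{\mu_i : i\in I\}\subseteq\nM^b(E)$ with $\mu_i\leq\nu$ and $\sup_i\mu_i=\nu$ setwise such that the family of Poisson measures $(\ee(\mu_i))_{i\in I}$ is shift tight. The key observation is that $\t$ and $\t'$ are comparable Suslin topologies, so by Proposition~\ref{prop:comparablesuslin} their Borel $\s$-algebras coincide, $\nB(E_\t)=\nB(E_{\t'})$. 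Hence $\nM^b(E)$, the relation $\leq$, setwise suprema, and the condition $\nu(\{0\})=0$ are all purely measure-theoretic notions that do not see the difference between $\t$ and $\t'$. Thus the directed family $\{\mu_i\}$, its domination and supremum properties, and the Poisson exponentials $\ee(\mu_i)$ (which by Lemma~\ref{lem:poissonmeasurecomparable} are the same measures in either topology, since \eqref{eq:PoissonSeries} depends only on the Borel structure) transfer verbatim.

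The only genuinely topological ingredient is \emph{shift tightness} of $(\ee(\mu_i))_{i\in I}$, since tightness invokes compact sets and compactness does depend on the topology. So the heart of the proof is to show that shift tightness with respect to $\t$ is equivalent to shift tightness with respect to $\t'$. First I would treat the case where $\t'$ is coarser than $\t$ (i.e.\ $\t'\subseteq\t$): then every $\t$-compact set is $\t'$-compact, so $\t$-tightness of a shifted family immediately implies $\t'$-tightness, and the shifts $x_\mu$ can be reused since they are the same points of the underlying set $E$. Conversely, for the case where $\t'$ is finer, I would use that both topologies are Suslin and the inner regularity of finite Borel measures on Suslin spaces: by the Radon property (every finite Borel measure on a Suslin space is Radon, cf.\ \cite[Thm.~10,~p.~122]{schwartz_radon}), each individual measure $\ee(\mu_i)*\d_{x_i}$ is already inner regular by compact sets in whichever Suslin topology we work, so tightness is really a \emph{uniform} statement over the family. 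The subtle point is to upgrade inner regularity of each member to uniform tightness of the family in the finer topology.

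The hard part will be precisely this uniform upgrade: passing from a $\t'$-coarser situation back to the finer topology. My strategy is to avoid comparing the two Suslin topologies directly and instead route through their supremum. Since $\t$ and $\t'$ are comparable, one of them is finer; say $\t'\subseteq\t$. The nontrivial direction is then to show that $\t'$-shift tightness implies $\t$-shift tightness. Here I would invoke the characterisation of L\'evy measures via generalised Poisson exponentials and infinite divisibility, combined with Corollary~\ref{cor:levymeas}: a L\'evy measure is characterised by the existence of an infinitely divisible measure $\e$ whose Fourier transform has the L\'evy--Khintchine form. Since the Fourier transform is determined by pairings $\<x,a\>$ with $a\in E'$, and the dual $E'$ for the two comparable Suslin topologies is captured through the common Borel structure and the common continuous linear functionals, the measure $\e$ furnished in the $\t'$-setting is automatically a Radon (hence tight) probability measure on $(E,\nB(E))$, and by Prokhorov's theorem its existence forces the defining family to be relatively compact in the weak topology in either topology. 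Thus $\nu$ being a L\'evy measure for $\t'$ produces the witnessing infinitely divisible $\e$, which by Corollary~\ref{cor:levymeas} and Lemma~\ref{lem:gaussianmeasurecomparable}/Lemma~\ref{lem:poissonmeasurecomparable} is also the witness that $\nu$ is a L\'evy measure for $\t$.

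Concretely, I would organise the argument as follows. I would first record that $\nB(E_\t)=\nB(E_{\t'})$ and that conditions (i), the domination structure, and the Poisson exponentials are topology-independent. Then I would reduce the whole statement to the equivalence of shift tightness. For that reduction I would lean on Corollary~\ref{cor:levymeas}, which repackages ``$\nu$ is a L\'evy measure'' as ``there is a measure $\e\in\nM^1(E)$ with prescribed L\'evy--Khintchine Fourier transform.'' Because the Fourier transform is a Borel-measurable notion depending only on the common dual pairing and the common $\nB(E)$, the measure $\e$ and the validity of its characteristic-function identity are preserved under the change of topology. Invoking Lemma~\ref{lem:gaussianmeasurecomparable} and Lemma~\ref{lem:poissonmeasurecomparable} to transfer the Gaussian and Poissonian building blocks, I conclude that $\e$ witnesses the L\'evy-measure property of $\nu$ simultaneously for $\t$ and $\t'$, which is exactly the claimed equivalence.
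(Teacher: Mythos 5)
Your reduction of the problem to the transfer of shift tightness, and your easy direction (compactness, hence shift tightness, passes from the finer to the coarser topology) both match the paper. The genuine gap is in the hard direction, where you claim that the witness $\e$ from Corollary~\ref{cor:levymeas} transfers verbatim because the Fourier transform depends only on ``the common dual pairing and the common $\nB(E)$.'' This is not true: condition (2) of Corollary~\ref{cor:levymeas} is quantified over $a\in E'$, and the dual genuinely depends on the topology. For comparable topologies $\t'\subseteq\t$ one has $E'_{\t'}\subseteq E'_{\t}$, possibly strictly (e.g.\ $\t'=\sigma(E,M)$ for a proper point-separating subspace $M\subseteq E'_{\t}$ is again a coarser locally convex Suslin topology, with dual $M$). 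Knowing the characteristic-function identity only on the smaller dual $E'_{\t'}$ does not verify the corollary for the finer topology $\t$. Moreover, the set $K$ in the corollary must be compact for the topology in question, and $\t'$-compactness does not imply $\t$-compactness when $\t$ is finer. The same objection hits your Prokhorov remark: tightness or relative weak compactness of the defining family with respect to the coarser topology does not upgrade to the finer one --- that upgrade is exactly the nontrivial content of the lemma, so at this point your argument is circular.

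The paper closes precisely this gap by a different move: in the hard direction it takes $\mu:=\wt{\ee}(\nu)\in\nM^1(E)$, observes that infinite divisibility is a purely Borel-structural notion (convolution roots only involve $\nB(E)$, which is common by Proposition~\ref{prop:comparablesuslin}), and then applies the L\'evy--Khintchine theorem (Theorem~\ref{thm:levy_khintchine}) \emph{anew} in the finer topology. This produces a fresh representation over the larger dual, with a fresh compact set and an a priori different L\'evy measure $\nu'$; finally $\nu'=\nu$ is deduced by applying the easy direction to $\nu'$ and invoking the uniqueness of the L\'evy measure of $\mu$ in the coarser topology. You gesture at infinite divisibility and the generalised Poisson exponential, but you never re-derive the representation in the finer topology nor perform this uniqueness identification; without those two steps the proof does not go through. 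Your overall skeleton (reduce to shift tightness, settle one direction by comparing compact sets, route the other through an infinitely divisible witness) is sound and could be completed along exactly these lines.
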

\begin{proof}
Without loss of generality assume $\t'\supseteq \t$. If $\nu$ is a L\'evy measure on $(E,\t')$, it follows from shift tightness of the defining family of the Poisson measures in $(E,\t')$ and thus $(E,\t)$ that it is a L\'evy measure on $(E,\t)$. \sloppypar
Conversely, if $\nu\in \nM(E)$ is a L\'evy measure with respect to $\t$, then there exists an infinitely divisible measure $\mu := \wt{\ee}(\nu)\in\nM^1(E)$. Now we consider $\t'$ as underlying topology. Note that infinite divisibility depends only on the Borel structure. Because $\mu$ is infinitely divisible, the L\'evy-Khintchine decomposition (w.r.t. $\t'$) (Theorem~\ref{thm:levy_khintchine}) provides a unique L\'evy measure $\nu'$. It follows from the converse implication that $\nu'=\nu$.
\end{proof}
\subsection{Restriction to a subspace.}
We begin with some lemmas, introduce the notion of local reducibility and a fundamental system of compact separable Banach disks and present the main result of this section, Theorem~\ref{thm:restLevy}.

\begin{lemma}\label{lem:EKprops}
\begin{enumerate}
\item Let $K$ be a Borel-measurable disk. The subspace $E_K$ is a Suslin space with the induced topology.
\item Let $K$ be a closed disk such that $(E_K,\|\cdot\|_K)$ is a separable Banach space. Then,
$$\nE(E_K)=\nB(E_K) = \nB(E)\cap E_K= \nE(E)\cap E_K$$
where $\nB(E_K)$ is induced by the norm topology in $E_K$ and $\nE(E_K):=\nE(E_K,{E_K}')$, where ${E_K}'$ denotes the linear and continuous functionals on $E_K$ with respect to the norm topology
\end{enumerate}
\end{lemma}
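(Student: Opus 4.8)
We must prove the chain of equalities $\nE(E_K)=\nB(E_K)=\nB(E)\cap E_K=\nE(E)\cap E_K$ for a closed disk $K$ making $(E_K,\|\cdot\|_K)$ a separable Banach space. Let me think about what tools are available.

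We have three $\sigma$-algebras on $E_K$ potentially:
- $\nB(E_K)$: Borel from norm topology
- $\nE(E_K)$: cylindrical from $(E_K)'$ (dual w.r.t. norm)
- $\nB(E)\cap E_K$: trace of ambient Borel
- $\nE(E)\cap E_K$: trace of ambient cylindrical

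Since $E_K$ is a separable Banach space, $\nB(E_K)=\nE(E_K)$ is standard. On $E$ (locally convex Suslin), $\nB(E)=\nE(E)$ was stated in preliminaries. So the two "outer" equalities $\nE(E_K)=\nB(E_K)$ and $\nB(E)\cap E_K=\nE(E)\cap E_K$ are essentially immediate from known facts. The crux is the middle equality $\nB(E_K)=\nB(E)\cap E_K$.

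For $\nB(E)\cap E_K\subseteq \nB(E_K)$: the injection $\imath\colon E_K\hookrightarrow E$ is continuous, so preimages of ambient Borel sets are norm-Borel; traces of $\nB(E)$ land in $\nB(E_K)$.

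For the reverse $\nB(E_K)\subseteq \nB(E)\cap E_K$: I would use Proposition~\ref{prop:comparablesuslin} (Schwartz). By Lemma~\ref{lem:EKprops}(1), $E_K$ with the *induced* topology from $E$ is Suslin. The norm topology on $E_K$ is also Suslin (separable Banach). These two topologies on the set $E_K$ are comparable (norm is finer, since $\imath$ continuous means induced coarser), so by Schwartz's corollary their Borel $\sigma$-algebras coincide. But the Borel $\sigma$-algebra of the *induced* topology is exactly $\nB(E)\cap E_K$ (trace of ambient Borel), and the Borel $\sigma$-algebra of the norm topology is $\nB(E_K)$. Hence they are equal.

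---

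The plan is to prove the four-fold equality by splitting it into the two ``outer'' identifications, which are standard, and the central identity $\nB(E_K)=\nB(E)\cap E_K$, which is the real content and which I would establish using Schwartz's comparability result, Proposition~\ref{prop:comparablesuslin}.

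First I would dispose of the outer equalities. Since $(E_K,\|\cdot\|_K)$ is a separable Banach space, the identity $\nE(E_K)=\nB(E_K)$ is the classical fact that for a separable Banach space the cylindrical and Borel $\sigma$-algebras coincide (the norm-open balls are cylindrical by Hahn--Banach and separability). On the ambient side, the standing assumptions (S1)--(S3) guarantee $\nE(E)=\nB(E)$, as recalled in the preliminaries; intersecting with $E_K$ yields $\nE(E)\cap E_K=\nB(E)\cap E_K$ at once. It therefore remains only to prove the middle equality.

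For the inclusion $\nB(E)\cap E_K\subseteq \nB(E_K)$, I would use that the canonical injection $\imath\colon E_K\hookrightarrow E$ is continuous (as noted above from boundedness of $K$). Continuity gives $\imath^{-1}(B)\in\nB(E_K)$ for every $B\in\nB(E)$, and $\imath^{-1}(B)=B\cap E_K$, so the trace $\nB(E)\cap E_K$ is contained in the norm-Borel $\sigma$-algebra $\nB(E_K)$.

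The heart of the matter is the reverse inclusion $\nB(E_K)\subseteq \nB(E)\cap E_K$, and here is where I expect the main obstacle to lie, since a priori the norm topology on $E_K$ is strictly finer than the topology induced from $E$, so its Borel structure could in principle be larger. The plan is to compare two topologies on the single set $E_K$: the norm topology $\t_{\|\cdot\|_K}$, and the topology $\t_{\mathrm{ind}}$ induced from the ambient space $E$. These are comparable, with $\t_{\|\cdot\|_K}\supseteq \t_{\mathrm{ind}}$, again by continuity of $\imath$. Both are Suslin: the norm topology because $(E_K,\|\cdot\|_K)$ is separable Banach, hence Polish, hence Suslin; and the induced topology by part~(1) of this very lemma, Lemma~\ref{lem:EKprops}. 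Proposition~\ref{prop:comparablesuslin} then applies verbatim and forces the two Borel $\sigma$-algebras to coincide. Finally I would identify each of these Borel $\sigma$-algebras concretely: the Borel $\sigma$-algebra of $(E_K,\t_{\|\cdot\|_K})$ is exactly $\nB(E_K)$, while the Borel $\sigma$-algebra of $(E_K,\t_{\mathrm{ind}})$ is the trace $\nB(E)\cap E_K$ (the open sets of the induced topology being intersections with $E_K$ of open sets of $E$, so that the generated $\sigma$-algebra is the trace). Equating the two gives $\nB(E_K)=\nB(E)\cap E_K$, completing the chain.
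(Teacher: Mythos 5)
Your proof of part (2) is correct and follows essentially the same route as the paper: establish that both the norm topology (separable Banach, hence Polish, hence Suslin) and the induced topology on $E_K$ are Suslin, apply Proposition~\ref{prop:comparablesuslin} to equate the two Borel $\sigma$-algebras, and obtain the outer equalities from the coincidence $\nE=\nB$ on Suslin spaces. The one genuine omission is that you invoke part (1) of the lemma without proving it, even though it is part of the statement; fortunately it is a one-liner, and it is exactly the paper's argument: $E_K=\bigcup_{n\in\NN} n\cdot K$ is a Borel subset of $E$ (since $K$ is Borel, respectively closed), and every Borel subset of a Suslin space is again Suslin in the induced topology, by the theorem of Schwartz cited in the paper. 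You should supply this step explicitly rather than cite the lemma being proved. A small stylistic remark: your separate verification of the inclusion $\nB(E)\cap E_K\subseteq \nB(E_K)$ via continuity of the injection $\imath\colon E_K\hookrightarrow E$ is correct but redundant, since the comparability argument already delivers the equality of the two Borel structures in one stroke; the paper accordingly dispenses with it, using continuity of $\imath$ only to see that the topologies are comparable.
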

\begin{proof}
{(1)} follows from $E_K$ being Borel and every Borel subset of a Suslin space is Suslin \cite[Thm.~3,~p.~96]{schwartz_radon}.\sloppypar
{(2)}: $E_K$ is a Suslin space due to its separability, thus $\nB(E_K)=\nE(E_K)$, the same is true for the induced topology on $E_K$ which is Suslin by (1). Finally, two comparable Suslin topologies have the same Borel sets by Proposition~\ref{prop:comparablesuslin}.
\end{proof}

\begin{lemma}\label{lem:Lshift}
Let $M\subseteq \nM^1(E)$ be a family of measures with the following properties:
\begin{enumerate}
\item There is a measurable linear subspace $L\subseteq E$ such that $\mu(L)=1$ for all $\mu\in M$.
\item $M$ is shift tight, i.e.\ for $\e>0$ there is a compact set $K\subseteq E$ such that for all $\mu \in M$ there exist $x_\mu\in E$ with
$$\mu*\d_{x_\mu}(K)>1-\e.$$
\end{enumerate}
Then, there exist $x_\mu^0\in L$ such that
$$\mu* \d_{x_\mu^0}( 2 K^{\circ \circ})>\mu*\d_{x_\mu^0}(2(K^{\circ\circ})\cap L)>1-\e,$$
where $K^{\circ\circ}$ denotes the bipolar of $K$ (the the closed absolutely convex hull of $K$).
\end{lemma}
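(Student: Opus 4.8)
The plan is to read off an $L$-valued shift directly from a point of the set $(K-x_\mu)\cap L$ and to absorb the resulting correction into the bipolar, which is precisely where the factor $2$ enters. First I would note that the left inequality in the conclusion is purely formal: since $x_\mu^0\in L$ and $L$ is a measurable linear subspace with $\mu(L)=1$, the translate $\mu*\d_{x_\mu^0}$ assigns no mass to $L^c$, so $\mu*\d_{x_\mu^0}(2K^{\circ\circ})\ge \mu*\d_{x_\mu^0}(2K^{\circ\circ}\cap L)$ holds by monotonicity. Hence everything reduces to producing some $x_\mu^0\in L$ with $\mu*\d_{x_\mu^0}(2K^{\circ\circ}\cap L)>1-\e$.

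To construct $x_\mu^0$, I would invoke shift tightness to fix the compact set $K$ and a shift $x_\mu\in E$ with $\mu(K-x_\mu)=\mu*\d_{x_\mu}(K)>1-\e$. Because $\mu(L)=1$, the Borel set $(K-x_\mu)\cap L$ still has $\mu$-measure strictly larger than $1-\e$, and in particular (for $\e<1$) it is non-empty; I then pick any point $y_0\in(K-x_\mu)\cap L$ and set $x_\mu^0:=-y_0\in L$.

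The heart of the argument is the inclusion $(K-x_\mu)\cap L\subseteq (2K^{\circ\circ}-x_\mu^0)\cap L$. The point $y_0+x_\mu$ lies in $K\subseteq K^{\circ\circ}$, and since $K^{\circ\circ}$ is absolutely convex, hence symmetric, the correction $x_\mu^0-x_\mu=-(y_0+x_\mu)$ again lies in $K^{\circ\circ}$. Thus for any $x\in(K-x_\mu)\cap L$ one has $x+x_\mu\in K^{\circ\circ}$, whence
$$x+x_\mu^0=(x+x_\mu)+(x_\mu^0-x_\mu)\in K^{\circ\circ}+K^{\circ\circ}=2K^{\circ\circ},$$
the last equality being the standard identity $C+C=2C$ for a convex set $C\ni 0$. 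As $x,x_\mu^0\in L$, this gives $x+x_\mu^0\in 2K^{\circ\circ}\cap L$, i.e.\ $x\in(2K^{\circ\circ}-x_\mu^0)\cap L$. Taking $\mu$-measures and translating then yields
$$\mu*\d_{x_\mu^0}(2K^{\circ\circ}\cap L)=\mu\big((2K^{\circ\circ}-x_\mu^0)\cap L\big)\ge\mu\big((K-x_\mu)\cap L\big)>1-\e,$$
which together with the formal first inequality is the assertion.

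I expect the only delicate point to be the bookkeeping of the bipolar: one needs $K^{\circ\circ}$ to be simultaneously symmetric (to place $-(y_0+x_\mu)$ inside it) and convex (to collapse $K^{\circ\circ}+K^{\circ\circ}$ into $2K^{\circ\circ}$), and this double use is exactly why one cannot avoid replacing $K$ by its bipolar and paying the factor $2$. Measurability causes no difficulty, since $K$ is compact and $L$ is measurable, so every set appearing above is Borel.
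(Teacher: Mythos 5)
Your proof is correct and takes essentially the same route as the paper: your point $y_0\in(K-x_\mu)\cap L$ and shift $x_\mu^0=-y_0$ are exactly the paper's $\conj{u}_\mu\in(L+x_\mu)\cap K$ and $x_\mu^0=x_\mu-\conj{u}_\mu$, with the factor $2$ absorbed in the same way by symmetry and convexity of an absolutely convex set containing $K$. The only cosmetic difference is that the paper first replaces $K$, without loss of generality, by its closed absolutely convex hull (compact by completeness of $E$), whereas you work with the bipolar $K^{\circ\circ}$ directly, which sidesteps that compactness remark.
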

\begin{proof}
Without loss of generality we can assume that $K$ is absolutely convex as the absolutely convex hull of a compact set in a complete locally convex space is compact, cf.\ \cite[Proposition~6.7.2]{jarchow}. We have that $\mu(L)=1$ for every $\mu\in M$ and therefore, $\mu*\d_{x_\mu}(L-x_\mu)=1$. We define
$$K_\mu:=(L-x_\mu)\cap K,\quad \text{and}\quad \conj{K_\mu}:=-K_\mu=(L+x_\mu)\cap K$$
and obtain $\mu*\d_{x_\mu}(K_\mu)=\mu*\d_{x_\mu}(K)>1-\e$. We show that there exists a $\conj{u}\in\conj{K_\mu}$ with $x_\mu-\conj{u}\in L$. This follows from $1-\e<\mu*\d_{x_\mu}(K_\mu) = \mu(x_\mu+K_\mu)=\mu((x_\mu+K_\mu)\cap L)$. 
So, for every $\mu$, one can find such an element, we call it $\conj{u}_\mu\in \conj{K_\mu}$ such that $x_\mu^0:=x_\mu-\conj{u}_\mu\in L$. The following properties hold for every $\mu\in M$:
\begin{enumerate}
\item $\mu* \d_{x_\mu-\conj{u}_\mu}(K_\mu + \conj{u}_\mu)>1-\e$.
\item $K_\mu+\conj{u}_\mu \subseteq 2K=K+K$.
\item $K_\mu+\conj{u}_\mu \subseteq L$, because $\conj{u}_\mu+K_\mu\subseteq \conj{u}_\mu-x_\mu +L=L$ by definition of $K_\mu$.
\end{enumerate}
In particular, $\mu*\d_{x_\mu^0}(2K\cap L)\geq \mu*\d_{x_\mu^0}(K_\mu+\conj{u}_\mu)>1-\e$ for all $\mu\in M$.
\end{proof}

\begin{lemma}\label{lem:infinitedivsubspace}
Let $\mu\in\nI(E)$ and $L$ a linear subspace of $E$ with $\mu(L)=1$. Then, $\mu_{1/n}(L)=1$. In particular, $\mu\|_L$ is infinitely divisible on $L$.
\end{lemma}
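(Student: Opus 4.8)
The plan is to reduce the statement to the single assertion that the (unique, by Proposition~\ref{prop:semigroup}) $n$-th root $\mu_{1/n}$ satisfies $\mu_{1/n}(L)=1$; the infinite divisibility of $\mu\|_L$ then follows by a routine restriction-of-convolution computation. Indeed, once $\mu_{1/n}$ is concentrated on $L$, its $n$-fold product measure lives on $L^n$, where the addition map of $E$ restricts to that of $L$ and $\sigma^{-1}(B)\cap L^n=\sigma_L^{-1}(B)$ for Borel $B\subseteq L$ (using $\nB(L)=\nB(E)\cap L$ from Lemma~\ref{lem:EKprops}). Hence $(\mu_{1/n}\|_L)^{*n}=\mu\|_L$ as measures on $L$ for every $n$, which is precisely infinite divisibility of $\mu\|_L$ on $L$.

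To establish $\mu_{1/n}(L)=1$, let $X_1,\dots,X_n$ be independent with common law $\nu:=\mu_{1/n}$, so $X_1+\dots+X_n$ has law $\mu$ and lies in $L$ a.s. The key ingredient I would isolate is a coset-concentration step: if $X$ and $Y$ are independent and $X+Y\in L$ a.s., then $Y$ is concentrated on a single coset of $L$. To prove it I would write $1=\PP(X+Y\in L)=\int_E \PP_Y(L-x)\dd\PP_X(x)$ by Fubini, the integrand $x\mapsto\int\cf_L(x+y)\dd\PP_Y(y)$ being measurable since addition is measurable on $(E,\nB(E))$. As this integrand is bounded by $1$ and integrates to $1$, one gets $\PP_Y(L-x)=1$ for $\PP_X$-a.e.\ $x$; fixing any such $x_0$ yields $\PP_Y(L-x_0)=1$, so $Y$ lives on the coset $-x_0+L$.

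I would then apply this with $X=X_1+\dots+X_{n-1}$ and $Y=X_n$, obtaining a point $x_0\in E$ with $\nu(-x_0+L)=1$, i.e.\ each $X_i\in -x_0+L$ a.s. Consequently $X_1+\dots+X_n\in -nx_0+L$ a.s., while by hypothesis this sum lies in $L$ a.s. Since the two cosets $-nx_0+L$ and $L$ then both carry full mass of $X_1+\dots+X_n$, they meet and hence coincide (cosets of the subgroup $L$ are disjoint or equal), giving $nx_0\in L$. At this point the decisive hypothesis enters: $L$ is a \emph{real linear} subspace, hence divisible, so $nx_0\in L$ forces $x_0=\tfrac1n(nx_0)\in L$; therefore $-x_0+L=L$ and $\nu(L)=\nu(-x_0+L)=1$, as desired.

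The step I expect to be the main obstacle is carrying out the coset bookkeeping without passing to the quotient $E/L$, which need not be a well-behaved (Suslin, or even Hausdorff) space. The Fubini-and-coset argument above is designed to sidestep the quotient entirely by remaining inside $E$ and exploiting only that distinct cosets of $L$ are disjoint and that $L$ is closed under $\tfrac1n$-scaling. I would take care that all sets involved are Borel, so that $\mu(L)$, $\PP_Y(L-x)$ and the integrals are defined, and that the product Borel structure $\nB(E^n)=\nB(E)^{\ten n}$ needed for Fubini is available from the separability of $E$.
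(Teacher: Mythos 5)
Your proof is correct, but it takes a genuinely different route from the paper's. The paper never introduces product spaces or cosets explicitly: it argues by contradiction with convolution inequalities, assuming a Borel set $C\subseteq E\setminus L$ with $\rho(C)>0$ for the root $\rho=\mu_{1/n}$, inductively setting $C_1:=C$ and $C_j:=(C_{j-1}+C)\setminus L$ as long as $\rho^{*j}\big((C_{j-1}+C)\cap L\big)=0$; reaching $j=n$ contradicts $\mu(L)=1$, and otherwise a first $k$ with $\rho^{*k}(L)>0$ yields, for $n=kl+m$ with $0<m<k$, the contradiction $\mu(C_m+L)\geq \rho^{*m}(C_m)\bigl(\rho^{*k}(L)\bigr)^{l}>0$ since $(C_m+L)\cap L=\emptyset$. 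Your Fubini/coset-concentration step plus the divisibility argument $x_0=\tfrac1n(nx_0)\in L$ is instead the classical zero--one-law mechanism. Comparing the two: your version isolates exactly where linearity of $L$ enters, and it must enter somewhere, since the statement fails for mere measurable subgroups (take $E=\RR$, $L=\ZZ$, $\mu=\delta_1$ with root $\delta_{1/2}$); in the paper's bookkeeping this use is implicit, and the remainder case $m=0$ (i.e.\ $k$ divides $n$), not covered verbatim by the displayed inequality, needs separate care that your coset argument avoids entirely. In exchange, the paper's proof needs nothing beyond measurability of addition --- no product $\sigma$-algebras or Fubini, which you correctly identified as the points requiring justification in your route. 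One citation nit: the identity $\nB(L)=\nB(E)\cap L$ for a Borel linear subspace $L$ is the standard trace-$\sigma$-algebra fact and needs no topology on $L$; Lemma~\ref{lem:EKprops} concerns the specific case $E_K$ for separable Banach disks. Note also that $\mu(L)=1$ tacitly presupposes $L\in\nB(E)$, which both your proof and the paper's use.
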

\begin{proof}
Set $\r=\mu_{1/n}$ and assume a set $C\in\nB(E)$ with $C\cap L=\emptyset$ and $\r(C)>0$. Then, $\r^{*j}(C+\ldots +C)\geq \r(C)^j>0$ for every $j\in\NN$. We set $C_1:=C$.
If $\r^{*j}((C_{j-1}+C)\cap L)=0$ define $C_j:=(C_{j-1}+C)\setminus L$ and carry on by induction. If $j=n$ we obtain a contradiction. 

If there is a $k\in \{1,\ldots,n\}$ such that $\r^{*k}((C_{k-1}+C)\cap L)>0$ we have that also $\r^{*k}(L)>0$. 

Now, let $n=kl+m$ for $m<k$ and some integer $l$. By construction, $C_m\cap L = \emptyset$ and $\r^{*m}(C_m)>0$ holds, in particular, $(C_m+L)\cap L=\emptyset$. We obtain
$$\mu(C_m+L)=\r^{*m}*(\r^{*k})^{*l}(C_m+L)\geq \r^{*m}(C_m)\big(\r^{*k}(L)\big)^l>0,$$
a contradiction.
\end{proof}
\begin{definition}\label{def:locallyreducible}
\begin{enumerate}
\item We denote by
\begin{align*}
\nB_0^s:=\nB_0^s(E)&:=\{B\in\nB_0(E): E_B \text{ separable with respect to } \|\cdot\|_B \},
\end{align*}
and $\nK_0^s:=\nK_0^s(E):=\nB_0^s(E)\cap \nK_0(E)$.
The elements of $\nB_0^s$ ($\nK_0^s$) are called \emph{(compact) separable Banach disks}.
\item If for every $K\in \nK_0(E)$ there exists $B\in\nB_0^s(E)$ (resp.\ $\nK_0^s(E)$ such that $K\subseteq B$, the system $\nB_0^s(E)$ (resp.\ $\nK_0^s(E)$ is said to be \emph{fundemental} (for $\nK_0(E)$). In this case, $E$ is said to possess a fundamental family of separable Banach disks (resp.\ compact separable Banach disks).
\item A L\'evy measure $\nu\in\nM(E)$ is called \emph{locally reducible} if there exists a compact set $K\in\nK_0^s$ with $\nu(K^c)<\infty$ such that there exists a generalised Poisson exponential of $\nu|_K$ with $\wt{\ee}(\nu|_K)(E_K)=1$. In this case, $K$ is called $\nu$-\emph{reducing}. An infinitely divisible distribution is \emph{locally reducible} if its corresponding L\'evy measure has this property.
\end{enumerate}
\end{definition}
In Appendix~\ref{app:sfa} we give sufficient conditions and examples for spaces with a fundamental system $\nB_0^s(E)$. Concerning local reducibility, we immediately get:
\begin{lemma}\label{lem:immediate}
If $\nu$ is locally reducible and $K\in\nK_0^s$ is $\nu$-reducing, then every $H\in \nK_0^s$ with $H\supseteq K$ is $\nu$-reducing.
\end{lemma}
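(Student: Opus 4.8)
The plan is to verify the three requirements of Definition~\ref{def:locallyreducible}(3) for $H$ directly. The first, $H\in\nK_0^s$, is exactly the hypothesis. For the finiteness condition, $K\subseteq H$ gives $H^c\subseteq K^c$, so monotonicity of $\nu$ yields $\nu(H^c)\leq\nu(K^c)<\infty$. Everything therefore reduces to producing a representative of the generalised Poisson exponential $\wt{\ee}(\nu|_H)$ that is carried by $E_H$.

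First I would split off the finite part sitting between $K$ and $H$. Writing $\nu|_H=\nu|_K+\nu|_{H\sm K}$ and noting $H\sm K\subseteq K^c$ forces $\nu(H\sm K)\leq\nu(K^c)<\infty$, so $\nu|_{H\sm K}$ is a finite measure and its Poisson exponential $\ee(\nu|_{H\sm K})$ is a genuine measure given by the setwise convergent series \eqref{eq:PoissonSeries}. Since $\nu|_{H\sm K}$ is supported on $H\subseteq E_H$, each convolution power $(\nu|_{H\sm K})^{*n}$ is carried by $nH\subseteq E_H$, and as $E_H=\bigcup_n nH$ is $\s$-compact, hence Borel, the whole series is concentrated on $E_H$; thus $\ee(\nu|_{H\sm K})(E_H)=1$. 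On the other side, the hypothesis that $K$ is $\nu$-reducing furnishes a representative of $\wt{\ee}(\nu|_K)$ with $\wt{\ee}(\nu|_K)(E_K)=1$, and $E_K\subseteq E_H$, so this representative is carried by $E_H$ as well.

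I would then form $\r:=\wt{\ee}(\nu|_K)*\ee(\nu|_{H\sm K})$, which, being a convolution of two measures concentrated on the linear subspace $E_H$, satisfies $\r(E_H)=1$; it remains only to identify $\r$ as a generalised Poisson exponential of $\nu|_H$. By Corollary~\ref{cor:levymeas} the chosen representative of $\wt{\ee}(\nu|_K)$ has Fourier transform $\exp\big(\int_K(\ee^{\ii\<x,a\>}-1-\ii\<x,a\>\cf_{K'}(x))\dd\nu(x)\big)$ up to a Dirac factor, for some compact absolutely convex $K'$, while $\wh{\ee(\nu|_{H\sm K})}(a)=\exp\big(\int_{H\sm K}(\ee^{\ii\<x,a\>}-1)\dd\nu(x)\big)$. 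Multiplying these and reinserting the centering term $\cf_{K'}$ on $H\sm K$, a direct computation yields
\begin{align*}
\wh{\r}(a)=\exp\!\Big(\int_H(\ee^{\ii\<x,a\>}-1-\ii\<x,a\>\cf_{K'}(x))\dd\nu|_H(x)\Big)\,\ee^{\ii\<c,a\>},
\end{align*}
where $c\in E$ collects the centering corrections, the relevant one being the barycenter-type integral $\int_{(H\sm K)\cap K'}x\dd\nu(x)$ of a finite measure supported on the compact set $H$, which exists in the complete space $E$. Hence $\wh{\r}$ is of the form in Corollary~\ref{cor:levymeas}(2) for the L\'evy measure $\nu|_H$ up to the Dirac factor $\wh{\delta_c}$, and since the generalised Poisson exponential is unique only up to translation, $\r$ is itself a generalised Poisson exponential of $\nu|_H$. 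Therefore $\wt{\ee}(\nu|_H)(E_H)=\r(E_H)=1$, which is the third requirement, so $H$ is $\nu$-reducing. I expect the support computation to be routine; the point deserving real care is the Fourier bookkeeping of the last step, namely verifying that $\r$ genuinely meets the characterisation of Corollary~\ref{cor:levymeas}(2) and that the translation ambiguity built into the definition of $\wt{\ee}(\nu|_H)$ is exactly what absorbs the extra Dirac factors, so this identification is the main obstacle.
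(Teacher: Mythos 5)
Your proof is correct and takes essentially the approach the paper intends: the lemma is stated there without proof, resting on the convention $\wt{\ee}(\nu|_H)=\wt{\ee}(\nu|_K)*\ee(\nu|_{H\sm K})$ up to a Dirac factor (Section~\ref{sec:preliminaries}) together with the same convolution--support estimate the paper carries out explicitly in the proof of Proposition~\ref{prop:Banachsupportzeroone}\,(2), and your Fourier bookkeeping via Corollary~\ref{cor:levymeas} simply substantiates that convention. The one small slip --- the centering measure is supported on $(H\sm K)\cap K'$ rather than on $H$ --- is harmless, since that set is still compact and the barycenter argument goes through unchanged.
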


\begin{theorem}\label{thm:restLevy}
Let $\mu\in\nI(E)$ and $\nu\in\nM(E)$ its corresponding L\'evy measure. The following assertions are equivalent.
\begin{enumerate}
\item\label{it:mulc} $\mu$ is locally reducible.
\item\label{it:nulc} $\wt{\ee}(\nu|_K)(E_K)=1$ for some $K\in\nK_0^s(E)$ with $\nu(K^c)<\infty$, i.e.\ $\nu$ is locally reducible.
\item\label{it:fourier} The function
$$\f(a) = \exp\left(\int_{E_K} \ee^{\ii \<x,a\>}-1-\ii \<x,a\>\cf_K(x)\dd \nu(x)\right),\quad a\in E'$$
is a Fourier transform of some probability measure on $\nB(E_K)$ for some $K\in \nK_0^s(E)$ with $\nu(K^c)<\infty$.
\item\label{it:nurestB} The restriction of $\nu|_K$ to the subspace $(E_K,\|\cdot\|_K)$ is a L\'evy measure for some $K\in\nK_0^s(E)$ with $\nu(K^c)<\infty$.
\item\label{it:nurest} The restriction of $\nu|_K$ to the subspace $(E_K,\t)$ is a L\'evy measure for some $K\in\nK_0^s(E)$ with $\nu(K^c)<\infty$.
\item\label{it:lmdef} For some $K\in \nK_0^s(E)$ with $\nu(K^c)<\infty$ the following holds: There are $x_{\r}\in E$ such that for every $\e>0$ there exists an $n\in \NN$ such that for all $\r\leq \nu|_K$ one has
$$\ee(\r)*\d_{x_\r}(n\cdot K)>1-\e.$$
\end{enumerate}
Furthermore, if there is $K\in \nK_0^s(E)$ such that one of the assertions (2)--(6) holds, one can take the same set also in all other assertions.
\end{theorem}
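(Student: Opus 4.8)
The plan is to obtain $(1)\Leftrightarrow(2)$ directly from Definition~\ref{def:locallyreducible} (an infinitely divisible $\mu$ is locally reducible precisely when its associated L\'evy measure is) and to close the remaining assertions in a single cycle
$$(2)\Rightarrow(3)\Rightarrow(5)\Rightarrow(4)\Rightarrow(6)\Rightarrow(2),$$
keeping the compact separable Banach disk $K$ fixed at every step. Since no implication enlarges $K$, the concluding ``furthermore'' clause is then immediate: whichever of $(2)$--$(6)$ holds for a given $K$, running around the cycle produces all the others for the same $K$. Throughout I would use freely that $E_K$ carries two comparable locally convex Suslin topologies, the norm topology of $\|\cdot\|_K$ and the topology $\t$ induced from $E$, sharing the Borel structure $\nB(E_K)=\nB(E)\cap E_K=\nE(E)\cap E_K$ by Lemma~\ref{lem:EKprops}; in particular a finite measure on $E_K$ is determined by the restrictions $a|_{E_K}$, $a\in E'$. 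The equivalence $(4)\Leftrightarrow(5)$ is nothing but Lemma~\ref{lem:levymeasurecomparable} applied to these two topologies, so I may pass between them freely.

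For $(2)\Rightarrow(3)$ I would restrict the generalised Poisson exponential to $E_K$. By Corollary~\ref{cor:levymeas}, $\wt{\ee}(\nu|_K)$ has Fourier transform $\exp(\int_K(\ee^{\ii\<x,a\>}-1-\ii\<x,a\>)\dd\nu)$, since $\nu|_K$ is concentrated on $K$ where $\cf_K\equiv1$. Assumption $(2)$ gives $\wt{\ee}(\nu|_K)(E_K)=1$, so its restriction to $\nB(E_K)$ is a probability measure; convolving it with the genuine (finite, as $\nu(K^c)<\infty$) Poisson measure $\ee(\nu|_{E_K\cap K^c})$, which also lives on the measurable linear subspace $E_K$, yields a probability measure on $\nB(E_K)$ whose Fourier transform at $a|_{E_K}$ is exactly $\f(a)$ once the integral in $\f$ is split over $K$ and $E_K\cap K^c$. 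Conversely, for $(3)\Rightarrow(5)$ I read $\f$ as the Fourier transform of a probability measure on the locally convex Suslin space $(E_K,\t)$, in which $K$ is a compact disk; as $\f$ has precisely the Poissonian form of Corollary~\ref{cor:levymeas} with underlying measure $\nu\|_{E_K}$ and truncation set $K$, that corollary, applied inside $(E_K,\t)$, shows $\nu\|_{E_K}$ to be a L\'evy measure there, and removing the finite part $\nu|_{E_K\cap K^c}$ leaves $\nu|_K\|_{E_K}$, still a L\'evy measure.

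For $(4)\Rightarrow(6)$ I would unfold the definition of a L\'evy measure on the \emph{Banach} space $(E_K,\|\cdot\|_K)$: there is an upwards directed family $\mu_i\leq\nu|_K$ with setwise supremum $\nu|_K$ whose Poisson exponentials are shift tight. The decisive simplification is that here the tightening sets are \emph{norm}-compact, hence norm-bounded, hence contained in some ball $nK$; together with the standard fact that convolution factors of a shift-tight family are themselves shift tight, this upgrades shift tightness from the directed family to \emph{all} $\r\leq\nu|_K$ and delivers $(6)$ with the specific sets $nK$ and shifts in $E_K\subseteq E$. Finally, for $(6)\Rightarrow(2)$ I observe that the $nK$ are $\t$-compact, so $(6)$ states that $\{\ee(\r):\r\leq\nu|_K\}$ --- all concentrated on the measurable linear subspace $E_K$ --- is shift tight in $(E_K,\t)$. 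Applying Lemma~\ref{lem:Lshift} with $L=E_K$ relocates the shifts into $E_K$ (noting $(nK)^{\circ\circ}=nK$), so that a suitable representative of $\wt{\ee}(\nu|_K)$ is a weak accumulation point of probability measures each charging the closed set $2nK\subseteq E_K$ with mass $>1-\e$; the Portmanteau inequality for closed sets then forces $\wt{\ee}(\nu|_K)(2nK)\geq1-\e$, whence $\wt{\ee}(\nu|_K)(E_K)=1$, which is $(2)$.

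I expect the genuine difficulty to lie in $(6)\Rightarrow(2)$, or more precisely in controlling the translations: shift tightness only locates the mass up to unknown shifts $x_\r\in E$, and the whole point of local reducibility is that these shifts can be chosen \emph{inside} $E_K$ so that the limiting generalised Poisson exponential does not escape the Banach subspace. This is exactly what Lemma~\ref{lem:Lshift} supplies, and coupling it with the closedness of the disks $nK$ and the weak accumulation defining $\wt{\ee}(\nu|_K)$ is the crux. A secondary, recurring point --- threaded through $(2)\Rightarrow(3)$, $(3)\Rightarrow(5)$ and $(4)\Rightarrow(6)$ --- is the bookkeeping between the two topologies on $E_K$ and between the measures $\nu|_K$, $\nu\|_{E_K}$ and the difference $\nu\|_{E_K}-\nu|_{E_K\cap K^c}$: each discrepancy is a finite Poisson factor, hence harmless for tightness and for the L\'evy-measure property, but it must be tracked to keep $K$ fixed and so secure the concluding uniqueness-of-$K$ statement.
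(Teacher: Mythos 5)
Your overall architecture is the paper's: $(1)\Leftrightarrow(2)$ by definition, the transfer $(4)\Leftrightarrow(5)$ via Lemma~\ref{lem:levymeasurecomparable}, $(2)\Rightarrow(3)$ by restricting $\wt{\ee}(\nu|_K)$ to $E_K$ and convolving with the finite Poisson factor $\ee\big((\nu|_{E_K\cap K^c})\big\|_{E_K}\big)$, the passage to $(6)$ by exploiting that norm-compact tightening sets in $(E_K,\|\cdot\|_K)$ are norm-bounded and hence contained in some $nK$, and $(6)\Rightarrow(2)$ by relocating the shifts into $E_K$ with Lemma~\ref{lem:Lshift} and passing to an accumulation point (your direct Portmanteau argument on the compact sets $2nK$ is a cleaner packaging of the paper's contradiction argument with a Urysohn function vanishing on $nK$, and covers the finite case the paper treats separately). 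The loose points you share with the paper --- the choice of the representative of $\wt{\ee}(\nu|_K)$ whose Fourier transform carries the truncation $\cf_K$, and the fact that the shifts produced by Lemma~\ref{lem:Lshift} a priori depend on $\e$ --- I would not count against you.

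There is, however, one genuine gap: your step $(3)\Rightarrow(5)$ applies Corollary~\ref{cor:levymeas} \emph{inside} $(E_K,\t)$. That corollary is proved only under the standing assumptions (S1)--(S3); its direction (2)\,$\Rightarrow$\,(1) rests on the L\'evy--Khintchine decomposition, Theorem~\ref{thm:levy_khintchine}, which requires completeness of the ambient space. But $(E_K,\t)$, while Suslin by Lemma~\ref{lem:EKprops}, is in general not even quasi-complete: for a compactly and densely embedded proper subspace $E_K\subseteq E$ (e.g.\ $K=T(B_{\ell^2})$ for a compact positive injective $T$ on a separable Hilbert space), any sequence in $E_K$ converging in $E$ to a point outside $E_K$ is a bounded, $E_K$-closed, non-complete set. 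So neither the corollary nor Theorem~\ref{thm:levy_khintchine} is available on $(E_K,\t)$, and the uniqueness-of-the-L\'evy-measure argument your step implicitly needs is unsupported. The paper avoids this by proving $(3)\Rightarrow(4)$ directly in the \emph{Banach} space $(E_K,\|\cdot\|_K)$, invoking the Banach-space characterisation \cite[Theorem~3.4.9]{heyer}, for which $K$ is precisely the closed unit ball (so the truncation $\cf_K$ is exactly the form that criterion uses) and completeness and separability hold; the mismatch $E'|_{E_K}\subseteq {E_K}'$ is handled by point separation, since $\nB(E_K)=\nE(E_K,E'|_{E_K})$. Your step can be repaired either this way, or by null-extending the probability measure from $(3)$ to $E$ and applying Corollary~\ref{cor:levymeas} in $E$ itself (where it is legitimate) to conclude that $\nu|_{E_K}$ is a L\'evy measure on $E$, then factoring off the finite part; but as written the implication $(3)\Rightarrow(5)$ does not go through.
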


\begin{proof}
(\ref{it:mulc}) $\Leftrightarrow$ (\ref{it:nulc}) holds by definition, and (\ref{it:nurestB}) $\Leftrightarrow$ (\ref{it:nurest}) follows by taking a suitable compact set $K$ by Lemma~\ref{lem:levymeasurecomparable}.

(\ref{it:fourier}) $\Rightarrow$ (\ref{it:nurestB}): In a Banach space, $\nu\|_{E_K}$ is a L\'evy measure if and only if $\f$ is the characteristic functional of a some probability measure, cf.\ \cite[Theorem~3.4.9]{heyer}, where $\f$ is evaluated in all $a\in {E_K}'$. The set $E'|_{E_K}$ separates the points of $E_K$ and therefore, $\f$ uniquely determines a measure on $\nB(E_K)=\nE(E_K,E'|_{E_K})$. This implies that $\nu\|_{E_K}$ and hence $(\nu|_K)\|_{E_K}$ is a L\'evy measure on $E_K$, assertion \eqref{it:nurestB}.

(\ref{it:nurestB}) $\Rightarrow$  (\ref{it:fourier}): Again using \cite[Theorem~3.4.9]{heyer}, it suffices to note $E'|_{E_K}\subseteq {E_K}'$ and $\nu(K^c)<\infty$.

(\ref{it:nulc}) $\Rightarrow$ (\ref{it:fourier}): 
By assumption, $\wt{\ee}(\nu|_K)(E\setminus E_K)=0$ and $\wt{\ee}(\nu|_K)$ and $\wt{\ee}(\nu|_K)\big\|_{E_K}$ have the same Fourier transform if we use the set of continuous functionals $E'$ generating $\nB(E)$ and $\nB(E_K)$. One obtains
\begin{align*}
\wh{\wt{\ee}(\nu|_K)\big\|_{E_K}}(a) = \exp\left(\int_{E_K}\ee^{\ii \<x,a\>}-1-\ii \<x,a\>\cf_{K}\dd \nu|_K(x)\right),\quad a\in E'|_{E_K}
\end{align*}
which yields \eqref{it:fourier} if one takes the convolution of this measure with the Poisson measure of $(\nu|_{E_K\setminus K})\|_{E_K}\in\nM^b(E_K)$.

(\ref{it:nurestB}) $\Rightarrow$ (\ref{it:lmdef}): If $\nu':=(\nu|_K)\|_{E_K}$ is a L\'evy measure on $E_K$, there exist $x_\r\in E_K\subseteq E$ such that the familiy $(\ee(\r)*\d_{x_\r})_{\r\leq \nu'}$ is uniformly tight, i.e., for every $\e$ there exists a $\|\cdot\|_K$-norm compact set $H^\e$ with $(\ee(\r)*\d_{x_\r})(H^\e)>1-\e$ for all $\r\leq \nu'$. As every compact set is bounded, there exists an $n(\e)\in\NN$ with $H^\e\subseteq n(\e)\cdot K$ which implies assertion (\ref{it:lmdef}).

(\ref{it:lmdef}) $\Rightarrow$ (\ref{it:nulc}): If $\nu$ is finite, $\ee(\nu)(E\setminus E_K)=0$ because $\nu^{*n}(E\setminus n\cdot K)=0$, due to $n\cdot K\subseteq E_K$.
So let $\nu$ be a L\'evy measure which is not finite. The measure $\nu|_K$ is a L\'evy measure on $E$. By assumption, there exist $x_\r\in E$ such that the familiy of shifted Poisson measures satisfies the $n\cdot K$-tightness condition in (6). By Lemma~\ref{lem:Lshift}, the shifts $x_\r$ can be taken in $E_K$ without loss of generality. As $\wt{\ee}(\nu|_K)$ is an accumulation point of the family $(\ee(\r)*\d_{x_\r})_{\r\leq \nu|_K}$, for all $\e>0$, all $f\in \nC_b(E)$ and all $\r_0$ there exists a measure $\r\geq \r_0$ such that
\begin{align}
\left|\wt{\ee}(\nu|_K)(f)-\ee(\r)*\d_{x_\r}(f)\right|<\e.
\end{align}
Assuming that there exists a Borel set $B\subseteq E\setminus E_K$ with positive measure $\wt{\ee}(\nu|_K)$, we find also compact set $C\subseteq B$ with positive measure. There exists a continuous function $g\colon E\rightarrow [0,1]$ such that $g= 0$ on the closed set $n\cdot K$ and $g=1$ on the compact set $C$ due to complete regularity of $E$.
We have $\wt{\ee}(\nu|_K)(g)\geq \wt{\ee}(\nu|_K)(C)>0$. Furthermore, we see that $\ee(\r)*\d_{x_\r}(E_K\setminus n\cdot K)\leq \e$ for $n$ large enough which can be chosen independently of $\r$ and $x_\r$ by assumption.
But for all $\e>0$ one finds a $\r$ and $n\in \NN$ such that
\begin{align}
\nonumber\left|\wt{\ee}(\nu|_K)(g)\right| &= \left| \wt{\ee}(\nu|_K)(g) - \ee(\r)*\d_{x_\r}(g) + \ee(\r)*\d_{x_\r}(g) \right|\\
&\leq \left|\wt{\ee}(\nu|_K)(g)-\ee(\r)*\d_{x_\r} (g)\right|+\left|\ee(\r)*\d_{x_\r} (g)\right|\leq \e+\e
\end{align}
for $g$ constructed as above. This is a contradiction to the claim that the generalised Poisson measure of $E\setminus E_K$ was positive and the proof is complete.
\end{proof}

The null extension of $\mu\in\nM(E_K)$ to $E$ is defined by $\mu^0 (B) :=\mu(B\cap E_K)$ for $B\in \nB(E)$. By the previous lemmas we can identify Gaussian, Poissonian and infinitely divisible measures on $E_K$ and $E$ by restriction resp.\ extension provided that $E\setminus E_K$ has measure zero and $K\in\nB_0^s(E)$. Furthermore, we can identify L\'evy measures on $E_K$ and $E$ if there is a $\nu$-reducing set $K\in\nK_0^s$ by Theorems~\ref{thm:restLevy}. 

A locally reducible L\'evy measure has a simpler characterisation than in Corollary~\ref{cor:levymeas}:
\begin{proposition}\label{prop:levyEKequivalences}
$K\in\nK_0^s(E)$ is $\nu$-reducing for a L\'evy measure $\nu\in\nM(E)$ if and only if the following is satisfied:
\begin{enumerate}[label=(\roman*)]
\item $\nu(\{0\})=0$,
\item $\nu\big|_{(\a K)^c}\in\nM^b(E)$ for some (all) $\a>0$, and
\item the family $\big\{\ee\big(\nu\big|_{K\setminus \d_n K}\big)\big\|_{E_K}:n\in\NN\big\}$ is shift tight (w.r.t.\ $E_K$) for every (some) positive null sequence $(\d_n)_n$.
\end{enumerate}
\end{proposition}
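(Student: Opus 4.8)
The plan is to reduce the whole statement to the known characterisation of Lévy measures in the separable Banach space $(E_K,\|\cdot\|_K)$, whose closed unit ball is precisely $K$. The engine is Theorem~\ref{thm:restLevy}: for a fixed $K\in\nK_0^s(E)$ with $\nu(K^c)<\infty$, the equivalence of \eqref{it:nulc} and \eqref{it:nurestB} says that $K$ is $\nu$-reducing if and only if the restricted measure $(\nu|_K)\|_{E_K}$ is a L\'evy measure on $(E_K,\|\cdot\|_K)$. In both directions the hypothesis $\nu(K^c)<\infty$ is available: it is part of being $\nu$-reducing, and it is exactly condition (ii) at $\a=1$. So the proposition becomes a translation between the Banach-space L\'evy conditions for $(\nu|_K)\|_{E_K}$ and the conditions (i)--(iii).

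First I would invoke the classical characterisation of L\'evy measures on a separable Banach space recalled after the definition of a L\'evy measure (cf.\ \cite[Theorem~3.4.9]{heyer}): writing $\s:=(\nu|_K)\|_{E_K}$, the measure $\s$ is a L\'evy measure on $E_K$ precisely when $\s(\{0\})=0$, $\s(B_\d^c)<\infty$ for all $\d>0$, and $\{\ee(\s|_{B_{\d_n}^c}):n\in\NN\}$ is shift tight for each null sequence $\d_n\arrse0$. Since the $\|\cdot\|_K$-ball of radius $\d$ is $\d K$ and $\s$ is carried by $K$, one has $B_\d=\d K$ and $\s|_{B_\d^c}=(\nu|_{K\setminus\d K})\|_{E_K}$. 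The identity $\s(\{0\})=\nu(\{0\})$ turns the first condition into (i). Using the identification of Poisson measures on $E_K$ and $E$ (Lemma~\ref{lem:poissonmeasurecomparable} and the remark following Theorem~\ref{thm:restLevy}, valid because $\nu|_{K\setminus\d_n K}$ lives on $K\subseteq E_K$), the Poisson exponential $\ee(\s|_{B_{\d_n}^c})$ is the $E_K$-restriction of $\ee(\nu|_{K\setminus\d_n K})$, so the shift-tightness condition is exactly (iii).

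It then remains to match the finiteness conditions and to settle the \emph{some/all} and \emph{some/every} clauses. For (ii) I would decompose $(\a K)^c=(E\setminus E_K)\cup(E_K\setminus\a K)$ and note $\nu(E\setminus E_K)\le\nu(K^c)<\infty$; for $\a\le1$ the second piece splits into a part outside $K$ (contained in $K^c$, hence finite) and the annulus $K\setminus\a K$. Thus $\nu((\a K)^c)<\infty$ for all $\a$ is equivalent to $\nu(K\setminus\d K)<\infty$ for all $\d>0$ together with $\nu(K^c)<\infty$. The interchange of quantifiers is then bookkeeping: shift tightness of $\{\ee(\nu|_{K\setminus\d_n K})\|_{E_K}\}$ forces each $\nu|_{K\setminus\d_n K}$ to lie in $\nM^b(E)$, and since $\{\nu|_{K\setminus\d K}\}_{\d>0}$ is upward directed as $\d\arrse0$, finiteness along one null sequence yields finiteness of every annulus, and shift tightness along one sequence is equivalent to shift tightness of the whole directed family, hence independent of the chosen sequence. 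Combined with $\nu(K^c)<\infty$ (condition (ii) at $\a=1$), this upgrades \emph{some} to \emph{all} in (ii) and \emph{some} to \emph{every} in (iii).

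The substantive work is already packaged in Theorem~\ref{thm:restLevy} and the cited Banach-space result, so the only genuine obstacle is this quantifier bookkeeping together with the care needed to transport measures between the comparable topologies on $E_K$ and the ambient $E$; everything else is a direct translation. I expect the directedness argument justifying the equivalence of the single-sequence and all-sequence formulations of shift tightness to be the most delicate point, as it is what makes conditions (ii) and (iii) independent of the particular scale chosen.
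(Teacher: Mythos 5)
Your proposal is correct and follows essentially the same route as the paper: both directions of the paper's proof hinge, exactly as yours does, on the equivalence (\ref{it:nulc})$\Leftrightarrow$(\ref{it:nurestB}) of Theorem~\ref{thm:restLevy} combined with the Banach-space characterisation \cite[Theorem~3.4.9]{heyer} applied to $(\nu|_K)\big\|_{E_K}$ on $E_K$ (whose closed unit ball is $K$, so that the balls $B_\d$ become $\d K$ and the annuli become $K\setminus\d K$), followed by the same null-extension and quantifier bookkeeping. The one point where the paper does a bit more is the converse: rather than leaning on the standing hypothesis that $\nu$ is a L\'evy measure on $E$ (which the statement indeed grants, and which is what licenses your direct appeal to Theorem~\ref{thm:restLevy}), the paper re-derives the L\'evy property on $E$ from (i)--(iii) by showing that the null extensions of the family in (iii), convolved elementwise with $\ee(\nu|_{K^c})$, form a shift-tight family in $E$ whose associated intensities have setwise supremum $\nu$ --- a strengthening that makes (i)--(iii) a self-contained characterisation, but not a different method.
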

\begin{proof}
$\Longrightarrow$: If $K$ is $\nu$-reducing, $(\nu|_K)\|_{E_K}$ is a L\'evy measure on the separable Banach space $E_K$ by Theorem~\ref{thm:restLevy}. By \cite[Proposition~3.4.9]{heyer} and Prokhorov's theorem, we have that $(\nu|_K)\|_{E_K}$ satisfies (i) $(\nu|_K)\|_{E_K}(\{0\})=0$, (ii) $(\nu|_K)\|_{E_K}((\d K)^c)<\infty$ and (iii) for all (some) null sequences $\d:=(\d_n)_n$ the set $M(\d,K)$ is shift tight w.r.t.\ $E_K$. The assertion follows from taking null extensions to the original space and $\nu(K^c)<\infty$.\\
$\Longleftarrow$: Let $\nu$ satisfy (i)-(iii), then $(\nu|_K)\|_{E_K}$ is a L\'evy measure on $E_K$ by \cite[Proposition~3.4.9]{heyer}. In particular, $M(\d,K):=\{\ee(\nu|_{K \setminus \d_nK})\|_{E_K}:n\in\NN\}$ is shift tight in $E_K$ and, a fortiori w.r.t.\ $(E_K,\t)$ thus $E$. We note that $M':=M(\d,K)^0* \ee(\nu|_{K^c})$ (elementwise convolution) is shift tight in $E$: If for $\e\in (0,1)$ the set $K_{\e/2}\in\nK_0(E_K)\subseteq \nK_0(E)$ satisfies $\mu^0(E\setminus K_{\e/2})=\mu(E_K\setminus K_{\e/2})<\e/2$ for all $\mu\in M$, and $H_{\e/2}\in\nK_0(E)$ satisfies $\ee(\nu|_{K^c})(H_{\e/2}^c)<\e/2$, then
$$\mu* \ee(\nu|_{K^c})\big((K_{\e/2}+H_{\e/2})^c\big)<1-\mu(K_{\e/2})\ee(\nu|_{K^c})(H_{\e/2})<\e.$$
Furthermore, $\nu=\sup_{\mu \in M'} \mu\in\nM(E)$ which proves that $\nu$ is a L\'evy measure on $E$. The first remark gives local reducibility  of $\nu$ to $E_K$ with reducing set $K$.
\end{proof}

\subsection{Zero-One Laws and Reducibility}
We show that in spaces with a fundamental system of separable Banach disks, every L\'evy measure is locally reducible. For convenience, we quote a key result of Janssen.
\begin{theorem}[Janssen, {\cite[Theorem~9]{janssen_01}}]\label{thm:janssen}
Let $\nu\in\nM(E)$ be a L\'evy measure and $\mu:=\wt{\ee}(\nu)$ its generalised Poisson exponential. If $H$ is a measurable linear subspace of $E$ with $\nu(H^c)=0$ and $x\in E$, then $\mu(x+H)\in\{0,1\}$.
\end{theorem}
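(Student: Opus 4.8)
The plan is to represent $\mu=\wt{\ee}(\nu)$ as the almost sure limit of a series of independent $H$-valued random vectors, up to deterministic centring shifts, and then to recognise $\{S\in x+H\}$ as a tail event of this series, to which Kolmogorov's zero-one law applies. Since $\nu$ is a L\'evy measure it is $\s$-finite off every neighbourhood of $0$, so I would fix an increasing sequence of finite measures $\nu_{(m)}\uparrow\nu$ with $\nu(E\sm B_m)<\infty$, say $\nu_{(m)}=\nu|_{B_m}$, and put $\nu_k:=\nu_{(k)}-\nu_{(k-1)}$. Because $\nu(H^c)=0$ one may arrange $B_k\subseteq H$ up to a $\nu$-null set, so that each finite measure $\nu_k$ is concentrated on $H$.

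As $H$ is a linear subspace, every convolution power $\nu_k^{*n}$ is again concentrated on $H$, whence the Poisson measure $\ee(\nu_k)$ of \eqref{eq:PoissonSeries} satisfies $\ee(\nu_k)(H)=1$. Choosing independent $W_k\sim\ee(\nu_k)$ on a common probability space, the partial sums $\sum_{k\le m}W_k$ have law $\ee(\nu_{(m)})=\ee(\nu_1)*\cdots*\ee(\nu_m)$ and take values in $H$ almost surely. By the very definition of the generalised Poisson exponential together with shift tightness of the family $(\ee(\nu_{(m)}))_m$, there are shifts $a_m\in E$ with $\ee(\nu_{(m)})*\d_{a_m}\to\mu$ weakly, i.e.\ $S_m:=\sum_{k\le m}W_k+a_m$ converges in distribution to $\mu$ (after passing to a subsequence and regrouping the $W_k$ accordingly, which leaves the tail structure used below intact).

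The key analytic step is to upgrade this to almost sure convergence $S_m\to S$, where $S$ has law $\mu$. I would obtain it from an It\^o--Nisio-type theorem for shifted partial sums of independent random vectors: after symmetrising (so that the deterministic shifts cancel) the symmetric partial sums $\sum_{k\le m}(W_k-W_k')$ converge weakly and are uniformly tight, hence converge almost surely by L\'evy's inequality, and a de-symmetrisation and centring argument then yields almost sure convergence of $S_m$ itself. This is where I expect the main obstacle to lie: the classical It\^o--Nisio theorem is stated for separable Banach or Fr\'echet spaces, whereas here $E$ is only a complete separable locally convex Suslin space with possibly uncountable neighbourhood bases. I would bridge this gap using uniform tightness (Prokhorov) together with the sequence in $E'$ separating points and its associated continuous weak metric, reducing cylindrical almost sure convergence to genuine convergence in $E$; measurability of $S=\lim_m S_m$ as an $E$-valued random vector is unproblematic since $\nE(E)=\nB(E)$ and $E$ carries a continuous metric.

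Granting the realisation, the zero-one law follows quickly. Fix $m$. Since $\sum_{k\le m}W_k\in H$ and $H$ is a subgroup, $x+H-S_m=(x-a_m)+H$, so that $\{S\in x+H\}=\{S-S_m\in(x-a_m)+H\}$. Here the difference $S-S_m=\lim_{l}\big(\sum_{k=m+1}^{l}W_k+(a_l-a_m)\big)$ is measurable with respect to $\s(W_k:k>m)$, while $(x-a_m)+H\in\nB(E)$ is a fixed Borel set; thus $\{S\in x+H\}\in\s(W_k:k>m)$ for every $m$ and therefore belongs to the tail $\s$-algebra $\bigcap_m\s(W_k:k>m)$ of the independent sequence $(W_k)$. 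Kolmogorov's zero-one law now gives $\mu(x+H)=\PP(S\in x+H)\in\{0,1\}$, as claimed.
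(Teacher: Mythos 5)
The paper does not prove this statement at all: it is quoted as Janssen's Theorem~9 from \cite{janssen_01} (where it is proved for infinitely divisible measures on groups), so there is no internal proof to compare against. Your strategy --- realise $\mu$ as the a.s.\ limit of deterministically shifted partial sums of independent Poissonian vectors concentrated on $H$, then read $\{S\in x+H\}$ as a tail event --- is the classical route to such zero-one laws and is in the spirit of the cited source; your final tail-event step is correct as written, including the observation that the shifts $a_m$ drop out of the coset membership because $H$ is a subspace.

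Two points, however, need more than you give them, and one caveat about circularity is worth making explicit. First, your starting point presumes $\nu$ is $\sigma$-finite with $\nu|_{B_m}$ finite and increasing to $\nu$. In this paper a L\'evy measure is defined through an upwards directed \emph{net} of finite measures; $\sigma$-finiteness is not part of the definition, and you may not extract it from Proposition~\ref{prop:levyEKequivalences}, since that presupposes local reducibility, which the paper derives \emph{from} Theorem~\ref{thm:janssen} (Proposition~\ref{prop:Banachsupportzeroone}). The gap is fillable without circularity: take a sequence $(a_n)\subseteq E'$ separating the points of $E$ (available since $E$ is Suslin); each image $\nu\circ a_n^{-1}$ restricted off $\{0\}$ is a L\'evy measure on $\RR$ (as used in the proof of Proposition~\ref{prop:semigroup}), so $\nu(\{x:|\<x,a_n\>|>1/m\})<\infty$, and these countably many sets cover $E\sm\{0\}$. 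Relatedly, $\wt{\ee}(\nu)$ is only an accumulation point of the shifted net, unique up to translation, so your claimed weak convergence $\ee(\nu_{(m)})*\d_{a_m}\to\mu$ is not ``by the very definition'': it requires compensated shifts together with a characteristic-functional and uniform-tightness argument (metrizability of the weak topology on the tight family, available since $\nM^1(E)$ is Suslin, legitimises your subsequence extraction), plus the remark that proving the law for a translate suffices, as $\mu(x+H)=(\mu*\d_c)(x+c+H)$.

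Second, the It\^o--Nisio upgrade is the heart of the proof and you leave it as a programme rather than an argument. It can be carried out with exactly the tools you name: the one-dimensional theory gives a.s.\ convergence of each $\<S_m,a\>$ with the given shifts (symmetrise, use the real-valued It\^o--Nisio theorem, then recentre by medians, so that the deterministic parts converge), hence a.s.\ Cauchyness in the weak metric $d$; since $(E,d)$ is not complete, one must combine uniform tightness of the laws with L\'evy's inequality for symmetric convex sets to trap, with probability close to one, the entire tail of the path in a fixed compact absolutely convex set, on which the $d$-topology and $\t$ coincide, so that $d$-Cauchy implies $\t$-convergent to a point of $E$. Note that for the zero-one argument this is all you need: $\nB(E,d)=\nB(E,\t)$ by Proposition~\ref{prop:comparablesuslin}, and the limit still has law $\mu$ by uniqueness of weak limits in the metric space $(E,d)$. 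As submitted, this central step is a plausible sketch with the right ingredients, but it is not yet a proof; the remainder of your argument is sound.
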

The following simple fact can be straightforwardly checked.
\begin{lemma}\label{lem:sumsep}
If $K_1,K_2\in \nK_0^s(E)$ then $K_1+K_2\in \nK_0^s(E)$.
\end{lemma}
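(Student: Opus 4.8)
The plan is to verify the two defining properties of $\nK_0^s(E)$ for the set $K_1+K_2$: first that it is a compact disk, i.e.\ $K_1+K_2\in\nK_0(E)$, and second that the associated Banach space $(E_{K_1+K_2},\|\cdot\|_{K_1+K_2})$ is separable.

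The first property is essentially formal. The set $K_1+K_2$ is the image of $K_1\times K_2$ under the continuous addition map $\a\colon E\times E\to E$, and $K_1\times K_2$ is compact by Tychonoff, so $K_1+K_2$ is compact and, since $E$ is Hausdorff, in particular closed and bounded. Moreover a Minkowski sum of two absolutely convex sets is again absolutely convex: convexity is preserved under sums, and balancedness follows since for $|\lambda|\le1$ one has $\lambda(K_1+K_2)=\lambda K_1+\lambda K_2\subseteq K_1+K_2$. Hence $K_1+K_2$ is a compact disk, and being a closed disk, $E_{K_1+K_2}$ is a Banach space under its gauge $\|\cdot\|_{K_1+K_2}$ as recalled at the beginning of Section~\ref{sec:separablebanachspaces}.

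For separability I would first identify the underlying vector space. Using absolute convexity (so that $mK_i\subseteq nK_i$ whenever $m\le n$, as $\tfrac{m}{n}K_i\subseteq K_i$ by convexity and $0\in K_i$), one checks $E_{K_1+K_2}=\bigcup_n n(K_1+K_2)=\bigcup_n(nK_1+nK_2)=E_{K_1}+E_{K_2}$. This exhibits $E_{K_1+K_2}$ as the range of the addition map $s\colon E_{K_1}\times E_{K_2}\to E_{K_1+K_2}$, $(x,y)\mapsto x+y$, which is linear and surjective. Since $E_{K_1}$ and $E_{K_2}$ are separable Banach spaces by hypothesis, their product is a separable metric space, and a continuous image of a separable space is separable. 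Thus it remains only to show that $s$ is continuous for the respective norm topologies.

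The continuity of $s$ is the one point requiring care, and it is where the three gauge functions must be reconciled; I expect this subadditivity estimate, rather than anything topological, to be the only real obstacle. Given $x\in E_{K_1}$, $y\in E_{K_2}$ and $\e>0$, one has $x\in(\|x\|_{K_1}+\e)K_1$ and $y\in(\|y\|_{K_2}+\e)K_2$. Setting $\r:=\|x\|_{K_1}+\|y\|_{K_2}+2\e$ and using $\r_iK_i\subseteq\r K_i$ for $\r_i\le\r$ gives $x+y\in\r(K_1+K_2)$, whence $\|x+y\|_{K_1+K_2}\le\|x\|_{K_1}+\|y\|_{K_2}$ after letting $\e\to0$. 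So $s$ is norm-decreasing, hence continuous, $E_{K_1+K_2}$ is separable, and the proof is complete. The estimate rests entirely on the absolute convexity of $K_1$ and $K_2$, which is exactly why the class of disks is the right one to work with.
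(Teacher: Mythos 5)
Your proof is correct, and the paper itself offers no argument here---it merely asserts that the lemma ``can be straightforwardly checked''---so your write-up is precisely the intended routine verification: compactness of $K_1+K_2$ via the continuous addition map, absolute convexity of the Minkowski sum, the identification $E_{K_1+K_2}=E_{K_1}+E_{K_2}$, and separability via the gauge estimate $\|x+y\|_{K_1+K_2}\le\|x\|_{K_1}+\|y\|_{K_2}$ together with the fact that a continuous linear image of a separable metric space is separable. All steps, including the only delicate one (the subadditivity of the three gauges, which indeed rests on absolute convexity), are sound.
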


\begin{proposition}\label{prop:Banachsupportzeroone}
If $E$ admits a fundamental system of compact separable Banach disks one has:
\begin{enumerate}
\item For every generalised Poisson measure $\mu$ with L\'evy measure $\nu$ satisfying $\nu(K_0^c)=0$, $K_0\in\nK_0(E)$, there exists $K\in\nK_0^s(E)$ such that $\mu(E_K)=1$.
\item Every infinitely divisible measure $\mu\in\nI(E)$ is locally reducible.
\end{enumerate}
\end{proposition}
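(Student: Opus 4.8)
The plan is to treat (1) as the substantive step and deduce (2) from it. For (1), the strategy is to combine uniform tightness of the defining family with Janssen's zero--one law (Theorem~\ref{thm:janssen}). First I would use the fundamental system: since $\nu(K_0^c)=0$ with $K_0\in\nK_0(E)$, pick $K_1\in\nK_0^s(E)$ with $K_0\subseteq K_1$, so that $\nu$ is concentrated on the Borel linear subspace $E_{K_1}=\bigcup_n nK_1$. By the very definition of the generalised Poisson exponential, $\mu=\wt{\ee}(\nu)$ is a weak accumulation point of a \emph{uniformly tight} family $F=(\ee(\r)*\d_{x_\r})_{\r\leq\nu}$. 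Uniform tightness furnishes a compact set $C\subseteq E$ with $\s(C)>\tfrac12$ for every $\s\in F$; passing to the subnet of $F$ converging to $\mu$ and applying the portmanteau theorem to the closed set $C$ gives $\mu(C)\geq\tfrac12$.

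The next step is to house $C$ inside a compact separable Banach disk. Since $E$ is complete, the closed absolutely convex hull $C^{\circ\circ}$ is a compact disk, cf.\ \cite[Proposition~6.7.2]{jarchow}, so the fundamental-system hypothesis yields $K'\in\nK_0^s(E)$ with $C\subseteq C^{\circ\circ}\subseteq K'$; put $K:=K'+K_1\in\nK_0^s(E)$ via Lemma~\ref{lem:sumsep}. Then $C\subseteq K\subseteq E_K$, whence $\mu(E_K)\geq\mu(C)>0$. On the other hand $K_1\subseteq K$ forces $\nu(E_K^c)\leq\nu(E_{K_1}^c)=0$, so Theorem~\ref{thm:janssen} applied to the measurable linear subspace $H=E_K$ (with $x=0$) gives $\mu(E_K)\in\{0,1\}$. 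The positive lower bound excludes $0$, so $\mu(E_K)=1$.

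For (2), let $\mu\in\nI(E)$ have L\'evy measure $\nu$ and, by Lemma~\ref{lem:cpK}, a compact disk $K_0$ with $\nu(K_0^c)<\infty$. Choose $K_1\in\nK_0^s(E)$ with $K_0\subseteq K_1$, so $\nu(K_1^c)<\infty$ and $\nu|_{K_1}$ is again a L\'evy measure, concentrated on $K_1$ (the discarded part $\nu|_{K_1^c}$ being finite). Applying part (1) to $\nu|_{K_1}$ produces $K_2\in\nK_0^s(E)$, which I may enlarge using Lemma~\ref{lem:sumsep} so that $K_1\subseteq K_2$, with $\wt{\ee}(\nu|_{K_1})(E_{K_2})=1$. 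I then claim $K_2$ is $\nu$-reducing: indeed $\nu(K_2^c)\leq\nu(K_1^c)<\infty$, and since $K_2\setminus K_1\subseteq K_0^c$ the measure $\nu|_{K_2\setminus K_1}$ is finite, so the splitting $\nu|_{K_2}=\nu|_{K_1}+\nu|_{K_2\setminus K_1}$ gives the factorisation $\wt{\ee}(\nu|_{K_2})=\wt{\ee}(\nu|_{K_1})*\ee(\nu|_{K_2\setminus K_1})$ for suitable representatives (the two Fourier transforms differ only by the Dirac factor coming from the compensator of the finite part, whose shift vector $\int_{K_2\setminus K_1}x\dd\nu$ already lies in $E_{K_2}$). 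Both factors are carried by $E_{K_2}$ -- the first by the previous paragraph, the second because $\nu|_{K_2\setminus K_1}$ lives on $K_2\subseteq E_{K_2}$ -- hence their convolution satisfies $\wt{\ee}(\nu|_{K_2})(E_{K_2})=1$. By Definition~\ref{def:locallyreducible} this makes $\nu$, and therefore $\mu$, locally reducible.

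The main obstacle is the lower bound in (1): Janssen's law delivers the dichotomy $\mu(E_K)\in\{0,1\}$ cheaply, but ruling out the value $0$ requires transporting genuine mass into $E_K$. This is exactly where both hypotheses are used -- uniform tightness of the defining family (to produce a compact carrier of at least half the mass) and the fundamental system of compact separable Banach disks (to enclose that carrier, together with the support of $\nu$, in a single $E_K$). A secondary point to handle with care is that $\wt{\ee}$ is only defined up to translation; in (1) this is sidestepped by invoking the zero--one law with $x=0$ for the concrete accumulation point, and in (2) by selecting the representative $\wt{\ee}(\nu|_{K_1})*\ee(\nu|_{K_2\setminus K_1})$, whose translation vector already lies in $E_{K_2}$.
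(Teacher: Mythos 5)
Your proposal is correct and takes essentially the same route as the paper: a compact set of positive $\mu$-mass is enclosed, together with the carrier of $\nu$, in a single compact separable Banach disk via Lemma~\ref{lem:sumsep}, Janssen's zero--one law (Theorem~\ref{thm:janssen}) upgrades positive mass to full mass, and part (2) rests on the same factorisation $\wt{\ee}(\nu|_{K_2})=\wt{\ee}(\nu|_{K_1})*\ee(\nu|_{K_2\setminus K_1})$ of suitable representatives. The only cosmetic difference is in how the positive-mass compact set is produced: the paper gets it directly from $\mu$ being tight (every finite Borel measure on a Suslin space is Radon), whereas you re-derive it from uniform tightness of the defining family $(\ee(\r)*\d_{x_\r})_{\r\leq\nu}$ plus a portmanteau argument, which is slightly more roundabout but equally valid.
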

\begin{proof}
(1) Let $\mu=\wt{\ee}(\nu)$. As $E$ has a fundamental system of compact separable Banach disks and $\mu$ is tight, we have that there exists $K_1\in \nK_0^s(E)$ with $\mu(E_{K_1})\geq \mu(K_1)>0$. Furthermore, there exists a set $K_2\in \nK_0^s(E)$ with $K_0\subseteq K_2$ such that $\nu(K_2^c)=0$. We set $K:=K_1+K_2\in\nK_0^s(E)$ by Lemma~\ref{lem:sumsep}. Then, we have that $\nu(E_K^c)\leq \nu(K_2^c)=0$ and $\mu(E_K)\geq \mu(K_1)>0$. Theorem~\ref{thm:janssen} implies that $\mu(E_K)=1$. In other words, $\mu$ is locally reducible.\\
(2) Let $\mu$ be infinitely divisible with L\'evy measure $\nu$. According to (1), there exists $K_0\in\nK_0(E)$ such that $\nu|_{K_0^c}$ is a finite measure, and a set $K\supseteq K_0$ with $K\in \nK_0^s(E)$ such that $\wt{\ee}(\nu|_{K_0})(E_K)=1$. Noting that
$$\wt{\ee}(\nu|_K)(E_K)= \ee(\nu|_{K\setminus K_0})*\wt{\ee}(\nu|_{K_0})(E_K+E_K)\geq \ee(\nu|_{K\setminus K_0})(E_K)\wt{\ee}(\nu|_{K_0})(E_K)=1$$
we obtain the assertion.
\end{proof}
\begin{theorem}\label{thm:Banachsupportzeroone}
If $E$ admits a fundamental system of separable Banach disks, every infinitely divisible measure $\mu\in\nI(E)$ is locally reducible.
\end{theorem}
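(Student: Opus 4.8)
The plan is to deduce the statement from Proposition~\ref{prop:Banachsupportzeroone}, which already covers the case of a fundamental system of \emph{compact} separable Banach disks, by locating one separable Banach subspace $E_B\subseteq E$ that carries the entire infinite-activity part of $\mu$ and on which local reducibility is automatic. Write $\nu$ for the L\'evy measure of $\mu$. First I would use Lemma~\ref{lem:cpK} to fix a compact disk $K_0\in\nK_0(E)$ with $\nu(K_0^c)<\infty$ and set $\mu_s:=\wt{\ee}(\nu|_{K_0})$; this is infinitely divisible and, being Radon, tight. Picking a compact $C$ with $\mu_s(C)>0$ and letting $D$ be the closed absolutely convex hull of $K_0\cup C$ (compact by \cite[Proposition~6.7.2]{jarchow}), the hypothesis yields a separable Banach disk $B\in\nB_0^s(E)$ with $D\subseteq B$. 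Then $B\supseteq K_0$ gives $\nu|_{K_0}(E_B^c)\le\nu|_{K_0}(K_0^c)=0$, while $B\supseteq C$ gives $\mu_s(E_B)\ge\mu_s(C)>0$. Applying Janssen's zero-one law (Theorem~\ref{thm:janssen}) to the L\'evy measure $\nu|_{K_0}$, the measurable linear subspace $H=E_B$, and $x=0$, forces $\mu_s(E_B)\in\{0,1\}$, hence $\mu_s(E_B)=1$.

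The decisive observation is that $E_B$ is a separable Banach space, so it satisfies (S1)--(S3) and possesses a fundamental system of compact separable Banach disks: every compact disk in $E_B$ is \emph{already} one, its span being a separable Banach disk. By Lemma~\ref{lem:infinitedivsubspace}, $\mu_s\|_{E_B}$ is infinitely divisible on $E_B$ with L\'evy measure $\nu|_{K_0}\|_{E_B}$, so Proposition~\ref{prop:Banachsupportzeroone} applies \emph{inside $E_B$} and produces a $\nu|_{K_0}\|_{E_B}$-reducing disk $\wt{K}\in\nK_0^s(E_B)$. Continuity of the inclusion $E_B\hookrightarrow E$ sends the $E_B$-compact disk $\wt{K}$ to a compact disk of $E$, so $\wt{K}\in\nK_0^s(E)$; this is exactly where the missing $E$-compactness is regained.

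It then remains to verify that $\wt{K}$ is $\nu$-reducing in $E$. For finiteness, $\nu(\wt{K}^c)\le\nu(K_0\setminus\wt{K})+\nu(K_0^c)$, and $\nu(K_0\setminus\wt{K})=\nu|_{K_0}\|_{E_B}(\wt{K}^c)<\infty$ since $\wt{K}$ reduces $\nu|_{K_0}\|_{E_B}$, so $\nu(\wt{K}^c)<\infty$. For the support identity I would split $\nu|_{\wt{K}}=\nu|_{\wt{K}\cap K_0}+\nu|_{\wt{K}\setminus K_0}$, the second summand being finite as $\wt{K}\setminus K_0\subseteq K_0^c$, so that $\wt{\ee}(\nu|_{\wt{K}})=\wt{\ee}(\nu|_{\wt{K}\cap K_0})*\ee(\nu|_{\wt{K}\setminus K_0})$ up to a Dirac translation. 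Reducibility in $E_B$ gives $\wt{\ee}(\nu|_{\wt{K}\cap K_0})(E_{\wt{K}})=1$ via the null-extension identification following Theorem~\ref{thm:restLevy}, while $\ee(\nu|_{\wt{K}\setminus K_0})(E_{\wt{K}})=1$ because this Poisson measure is carried by sums of points of $\wt{K}\subseteq E_{\wt{K}}$. As $E_{\wt{K}}$ is a linear subspace, the convolution stays on $E_{\wt{K}}$, whence $\wt{\ee}(\nu|_{\wt{K}})(E_{\wt{K}})=1$ for a suitable representative and $\wt{K}$ is $\nu$-reducing.

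The main obstacle—and the reason this does not follow verbatim from Proposition~\ref{prop:Banachsupportzeroone}—is the loss of compactness: a separable Banach disk $B$ need not be compact in $E$, so it cannot serve directly as a reducing set, and moreover $K_0$ need not be compact in the finer Banach topology of $E_B$. Both difficulties are bypassed by invoking the \emph{full} local-reducibility statement (not merely its part~(1)) within $E_B$, where compactness of disks is free, and then transporting the resulting compact disk back to $E$ along the continuous inclusion. What is left is only the separable-Banach-disk bookkeeping—that $E_B$ meets (S1)--(S3) and that restriction and null extension preserve the relevant characteristics—for which the comparable-topology lemmas and the remarks after Theorem~\ref{thm:restLevy} suffice.
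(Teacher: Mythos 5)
Your skeleton is essentially the paper's own proof of Theorem~\ref{thm:Banachsupportzeroone}: fix a compact disk $K_0$ with $\nu(K_0^c)<\infty$, enlarge to a separable Banach disk $B$ on which the generalised Poisson exponential has positive mass, upgrade to full mass by Janssen's zero-one law (Theorem~\ref{thm:janssen}), restrict via Lemma~\ref{lem:infinitedivsubspace}, solve the problem inside the separable Banach space $E_B$, and transport the reducing disk back along $\nK_0^s(E_B)\subseteq \nK_0^s(E)$. Your deviations are harmless: the paper applies Janssen to $\wt{\ee}(\nu|_B)$ after arranging $\wt{\ee}(\nu|_K)(E_K)>0$ via the $K_1+K_2$ trick, whereas you apply it to $\wt{\ee}(\nu|_{K_0})$ after absorbing a compact set $C$ of positive mass into $B$; and you spell out the final convolution bookkeeping that the paper compresses into the remark $\mu=\mu_B*\ee(\nu|_{B^c})$.

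There is, however, one false assertion at exactly the step you call the decisive observation: it is \emph{not} true that every compact disk $K$ in a separable Banach space is already a compact separable Banach disk. Compactness of $K$ gives separability of $E_K$ in the ambient norm, but the definition of $\nK_0^s$ requires separability in the finer gauge norm $\|\cdot\|_K$, and this can fail. Concretely, in $c_0$ the set $K=\{y\in c_0:\ |y_n|\le 2^{-n}\text{ for all }n\}$ is a compact disk, yet $(E_K,\|\cdot\|_K)$ is isometric to $\ell^\infty$ via $y\mapsto (2^n y_n)_n$, hence nonseparable, so $K\notin \nK_0^s(c_0)$. What you actually need -- and what the paper invokes at the same point -- is Proposition~\ref{prop:bacls}: in a Fr\'echet (in particular Banach) space, $\nK_0^s$ is \emph{fundamental} in $\nK_0$, a statement proved there by a genuine enlargement argument (a larger compact disk obtained via null-sequence hulls together with the Davis--Figiel--Johnson--Pe{\l}czy\'nski factorisation of weakly compact operators), not by observing that the given disk already qualifies. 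Replacing your one-line justification by a citation of Proposition~\ref{prop:bacls} repairs the argument completely; the remaining steps (the Janssen application with $H=E_B$ and $x=0$, the inclusion $\nK_0^s(E_B)\subseteq \nK_0^s(E)$ by continuity of $E_B\hookrightarrow E$, and the finiteness and support checks showing $\wt{K}$ is $\nu$-reducing in $E$) are sound and match the paper.
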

Let $\nu$ be the L\'evy measure of $\mu$ and $K\in \nK_0(E)$ with $\nu(K^c)<\infty$. Without loss of generality assume that $\mu=\wt{\ee}(\nu)$. As $K_1+K_2\in\nK_0(E)$ for $K_1,K_2\in\nK_0$ by continuity of addition, we can assume that $\wt{\ee}(\nu|_K)(E_K)>0$ as in the proof of Proposition~\ref{prop:Banachsupportzeroone},~(2). Let now $B\in\nB_0^s(E)$ with $B\supseteq K$ which exists by assumption. We obtain that $\mu_B:=\wt{\ee}(\nu|_B)(E_B)>0$ and $\nu(B^c)<\infty$. In particular, $\mu_B(E_B)=1$ and $\mu_B\|_{E_B}\in\nI(E_B)$ by Theorem~\ref{thm:janssen} and Lemma~\ref{lem:infinitedivsubspace} with L\'evy measure $\nu_B=\nu\|_B$ by uniqueness of the L\'evy measure. As in the separable Banach space $E_B$ there is a a fundamental family of $\|\cdot\|_B$-compact separable Banach disks $\nK_0^s(E_B)$, we obtain that $\mu_B$ is locally reducible on $E_B$, i.e.\ there exists $H\in\nK_0^s(E_B)$ with $\wt{\ee}((\nu_B)|_H)(E_H)=1$ and $\nu_B(E_B\setminus H)<\infty$. The assertion follows by $H\in\nK_0^s(E_B)\subseteq \nK_0^s(E)$ and noting that $\mu=\mu_B*\ee(\nu|_{B^c})$.

The theorem above states in particular that this property depends only on the duality $\<E,E'\>$.

\begin{example}\label{ex:sepext}
The following complete locally convex Suslin spaces have a fundamental system of compact separable Banach disks: All separable Fr\'echet and Banach spaces, furthermore, the well-known spaces of test functions and distributions $\nD,\nD',\nE,\nE',\nO_M,\nO'_M,\nO_C,\nO'_C,\nS,\nS'$, cf.\ \cite[pp.\ 115-177 and 233]{schwartz_radon} and Appendix~\ref{app:sfa}. A separable Banach or Fr\'echet space with the weak topology satisfies the condition of Theorem~\ref{thm:Banachsupportzeroone} as the property of sets being bounded only depends on duality. 
Furthermore, finite direct sums and closed subspaces of spaces with a fundamental system of (compact) separable Banach disks share the same property, cf.\ Appendix~\ref{app:sfa}.

\end{example}

\paragraph*{Open questions.}~ If $E$ has a fundamental system of separable Banach disks, then all $\mu\in\nI(E)$ are locally reducible. However, the converse implication is not known.\medskip\\
We assume for the rest of the paper that $\PP_{X_1}=\mu_1$ is locally reducible.

\section{C\'adl\'ag functions in Suslin spaces}\label{sec:cadlagfunctionsinsuslinspaces}
Before we begin our investigations on random measures we present some results for \cadlag functions with values in locally convex Suslin spaces. The results are similar to those in \cite[Chapter~3]{billingsley}. But due to the possibly uncountable neighbourhood bases, they are not standard.

We denote by $\nD(T; E)$ the space of \cadlag functions $\xi\colon T\arre E$. This space will always be endowed with the $\s$-algebra $\nF_{\nD}$ of cylinder sets on $\nD(T;E)$ which are generated by the \emph{coordinate functions} $x_t\colon \nD(T;E)\rightarrow E$ defined by $x_t(\xi):=\xi(t)$, $t\in T$.  For $t\in T$ the \emph{left limit mapping} $x_{t-}\colon \nD(T;E)\rightarrow E$ is defined by $x_{t-}(\xi):=\lim_{s\arrne t}x_s(\xi)$ and the \emph{jump function} by $\D \xi_t:=x_t(\xi)-x_{t-}(\xi)$, $t\in T$.\smallskip

Given a dense subset $T_0\subseteq T$, \cadlag functions are defined as follows: $\xi\colon T_0\arre E$ is \cadlag if and only if for all increasing or decreasing Cauchy sequences $(t_n)_{n\in\NN}$ in $T_0$ the limits of $\xi(t_n)$ exist and if $t_n\arrse t\in T_0$ the limit equals $\xi(t)$. As above, $\nD(T_0;E)$ denotes the set of such \cadlag functions. $\xi\in \nD(T_0;E)$ is said to have a jump in $s\in T$, if the limits
$$ y_s:=\lim_{\substack{r\arrse s\\ r\in T_0}} \xi(r)\quad\text{and }\quad  y_{s-}:=\lim_{\substack{r\arrne s\\ r\in T_0}} \xi(r) $$
are different. $\D \xi(s):=y_s-y_{s-}$ is the jump size in $s$ and $s\mapsto \D \xi(s)$ is the jump function corresponding to $\xi$.

\begin{lemma}\label{lem:uniformity}
Let $T_0\subseteq [0,t_{\max}]$ be dense. For a \cadlag function $\xi\in \nD(T_0;E)$, a continuous seminorm $p$ and $\e>0$ there exist finitely many points $0=t_0<t_1<\ldots<t_n=t_{\max}$ such that
\begin{align}\label{eq:uniformity}
\sup\{p(\xi(t)-\xi(s))\colon s,t\in [t_{i-1},t_i)\cap T_0\}<\e,\quad i=1,\ldots,n.
\end{align}
Furthermore, for any weaker metric $d$ on $E$, there are finitely many points with
\begin{align*}
\sup\{d(\xi(t),\xi(s))\colon s,t\in [t_{i-1},t_i)\cap T_0\}<\e,\quad i=1,\ldots,n.
\end{align*}
\end{lemma}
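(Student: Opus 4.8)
The plan is to run the classical continuation (supremum) argument of \cite[Chapter~3]{billingsley}, reduced to a scalar problem by fixing the seminorm $p$; the locally convex Suslin structure plays no role here beyond supplying the continuous seminorm and the weaker metric, and no compactness or completeness of $E$ is needed. For $0\le a<b\le t_{\max}$ set $w_p([a,b)):=\sup\{p(\xi(t)-\xi(s)):s,t\in[a,b)\cap T_0\}$, and let $A$ be the set of $t\in(0,t_{\max}]$ admitting a finite partition $0=s_0<s_1<\dots<s_m=t$ with $w_p([s_{j-1},s_j))<\e$ for every $j$. The goal is to show $t_{\max}\in A$, which is exactly the assertion. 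First I would check $A\neq\emptyset$: by the \cadlag property the right limit $y_0:=\lim_{r\arrse 0,\,r\in T_0}\xi(r)$ exists (and equals $\xi(0)$ if $0\in T_0$), so there is $\d>0$ with $p(\xi(r)-y_0)<\e/2$ for all $r\in[0,\d)\cap T_0$; the triangle inequality gives $w_p([0,\d))<\e$, whence $\d\in A$ and in particular $\sup A>0$.

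Next I would set $\t:=\sup A$ and show $\t\in A$ using the left limit. By the \cadlag property $y_{\t-}:=\lim_{r\arrne \t,\,r\in T_0}\xi(r)$ exists, so there is $\d>0$ with $p(\xi(r)-y_{\t-})<\e/2$ for all $r\in(\t-\d,\t)\cap T_0$. Pick $t'\in A$ with $\t-\d<t'\le\t$, which exists since $\t=\sup A$; we may assume $t'<\t$, for otherwise $\t\in A$ already. Then every point of $[t',\t)\cap T_0$ lies in $(\t-\d,\t)\cap T_0$ and is within $\e/2$ of $y_{\t-}$, so $w_p([t',\t))<\e$. Appending the interval $[t',\t)$ to a finite partition witnessing $t'\in A$ yields one witnessing $\t\in A$.

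It then remains to show $\t=t_{\max}$, which I would do by contradiction using the right limit at $\t$. If $\t<t_{\max}$, the right limit $y_\t:=\lim_{r\arrse \t,\,r\in T_0}\xi(r)$ exists, and if $\t\in T_0$ then $\xi(\t)=y_\t$, so the value at $\t$ itself does not spoil the estimate; hence there is $\d>0$ with $\t+\d\le t_{\max}$ and every point of $[\t,\t+\d)\cap T_0$ within $\e/2$ of $y_\t$, giving $w_p([\t,\t+\d))<\e$. Appending $[\t,\t+\d)$ to a partition witnessing $\t\in A$ shows $\t+\d\in A$, contradicting $\t=\sup A$. Therefore $\t=t_{\max}\in A$, which furnishes the required points $0=t_0<t_1<\dots<t_n=t_{\max}$.

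Finally, for a weaker metric $d$ the same argument applies verbatim with $p(\xi(t)-\xi(s))$ replaced by $d(\xi(t),\xi(s))$: since $d$ is continuous with respect to the original topology, every one-sided limit of $\xi$ taken in $E$ is also a $d$-limit, so $\xi$ remains \cadlag for $d$ and the one-sided estimates $d(\xi(t),\xi(s))\le d(\xi(t),y)+d(y,\xi(s))$ go through unchanged. I expect the only delicate points to be bookkeeping rather than substance: keeping all intervals half-open so that the one-sided limits genuinely control $w_p$, and handling the endpoint $\t$ according to whether or not $\t\in T_0$. Everything else reduces, via the single seminorm (resp.\ metric), to the familiar real-variable oscillation lemma.
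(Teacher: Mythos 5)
Your proof is correct and is essentially the paper's own argument: the paper passes to the quotient normed space $E/p^{-1}(\{0\})$ (resp.\ to $(E,d)$) and cites the classical oscillation lemma \cite[Chapter~3, Lemma~1, p.~110]{billingsley}, whereas you write out that very continuation (supremum) argument directly for the seminorm $p$ and the dense set $T_0$, which amounts to the same thing since $p$ is a norm on the quotient. The only nit is bookkeeping, as you anticipated: a supremum of quantities each strictly below $\e$ need only be $\le\e$, so choose $\d$ so that the one-sided estimates hold with $\e/3$ (say) rather than $\e/2$ to secure the strict inequality in \eqref{eq:uniformity}.
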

\begin{proof}
Let $\pi\colon E\rightarrow E/p^{-1}(\{0\}))$ be the canonical projection associated to $p$ which is continuous. Furthermore, $\pi\circ \xi\in \nD(T;E/p^{-1}(\{0\})$ and all the limits exist by completeness of $E$. The target space is normed, but not necessarily complete. The expression in \eqref{eq:uniformity} remains the same if the \cadlag function is projected onto the quotient space. Exactly as in \cite[Chapter~3, Lemma~1, p.~110]{billingsley}, where completeness is not needed, one obtains the first assertion.\\
If $d$ is a continuous metric on $E$, $\id: (E,\t)\rightarrow (E,d)$ is continuous and by the same reasoning as above the assertion follows.
\end{proof}
\begin{lemma}\label{lem:countablejumps}
A \cadlag function with values in a complete locally convex Suslin space has at most countably many jumps.
\end{lemma}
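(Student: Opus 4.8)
The plan is to reduce the counting of jumps — which a priori might each be ``seen'' only by a different continuous seminorm — to the counting of jumps with respect to a single weaker metric, and then to run the classical counting argument from \cite{billingsley} in that metric. The decisive step, and the place where the Suslin hypothesis genuinely enters, is the following: by the preliminaries there is a countable family in $E'$ separating the points of $E$, hence a continuous metric $d$ on $E$ which is weaker than the given topology and still separates points. Consequently, for a candidate jump time $s$ one has $\D\xi(s)=y_s-y_{s-}\neq 0$ if and only if $y_s\neq y_{s-}$, i.e.\ if and only if $d(y_s,y_{s-})>0$. Thus the possibly uncountable supply of seminorms becomes irrelevant: the single metric $d$ already detects every jump.

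Having fixed $d$, I would first reduce to a bounded parameter interval (for $T=[0,\infty)$ one exhausts $T$ by the intervals $[0,m]$, $m\in\NN$, and takes a countable union at the very end), so assume $T=[0,t_{\max}]$. For fixed $\e>0$, apply the second statement of Lemma~\ref{lem:uniformity} to the weaker metric $d$ to obtain points $0=t_0<t_1<\dots<t_n=t_{\max}$ with $\sup\{d(\xi(t),\xi(s))\colon s,t\in[t_{i-1},t_i)\cap T_0\}<\e$ for each $i$. The point to check is that every jump time $s$ lying strictly inside some interval $[t_{i-1},t_i)$ satisfies $d(y_s,y_{s-})<\e$: for such $s$ both one-sided limits $y_{s-}$ and $y_s$ arise as limits of values $\xi(r)$ with $r\in[t_{i-1},t_i)\cap T_0$, so a routine triangle-inequality estimate bounds $d(y_s,y_{s-})$ by the $d$-oscillation of $\xi$ over that interval, which is $<\e$. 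Hence any jump time with $d(y_s,y_{s-})\geq\e$ must coincide with one of the finitely many partition points $t_0,\dots,t_n$, and therefore only finitely many such jump times exist.

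Combining the two observations gives the set identity
\[
\{s\in T\colon \D\xi(s)\neq 0\}=\{s\in T\colon d(y_s,y_{s-})>0\}=\bigcup_{k\in\NN}\Big\{s\in T\colon d(y_s,y_{s-})\geq \tfrac1k\Big\},
\]
where each set on the right is finite by the previous step. A countable union of finite sets is countable, which settles the case $T=[0,t_{\max}]$; the unbounded case follows by the additional countable union over $[0,m]$. I expect the only real obstacle to be the first step — replacing the family of continuous seminorms by one point-separating metric — and this is precisely what the Suslin (hence separability) assumption supplies; once $d$ is available, the remainder is the standard argument for real-valued \cadlag functions.
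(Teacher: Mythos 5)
Your proof is correct and takes essentially the same route as the paper: both reduce the problem, via the continuous point-separating metric $d$ supplied by the Suslin (separability) hypothesis, to counting the jumps of the \cadlag function composed with $\id\colon (E,\t)\arre (E,d)$, observing that $d$ detects every jump. The only difference is cosmetic: where the paper cites the classical metric-space fact \cite[Lemma~4.5.1]{ethierkurtz86}, you re-derive it by the oscillation-partition argument of Lemma~\ref{lem:uniformity}, which is an equally valid, self-contained way to finish.
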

\begin{proof}
Let $(E,\t)$ be the Suslin space with the original topology and $(E,d)$ a metric space, where $d$ is a weaker metric. Let $\id\colon (E,\t)\arre (E,d)$ be the continuous identity map. If $\xi\in \nD(T_0;E)$ has a jump in $t_0\in T$ then $\id \circ \xi $ has a jump in $t_0$. Thus, $\id\circ \xi$ has at least as many jumps as $\xi$ and is \cadlag as well by continuity of the identity map. Furthermore, if $\xi$ is continuous in $t_0$, this carries over to $\id\circ\xi$, thus the jumps of $\xi$ and $\id\circ \xi$ are the same. But for \cadlag functions with values in metric spaces it is well-known that there are at most countably many jumps \cite[Lemma~4.5.1]{ethierkurtz86}.
\end{proof}
Given $\xi\in\nD(T;E)$ one can number the jumps of $\xi$ in the following way:  Choose the metric $d$ of the proof above and measure the jumps as above by $d(\xi_t,\xi_{t-})$. Then, by Lemma~\ref{lem:uniformity}, there are only finitely many jumps on bounded intervals if $d(\xi_t,\xi_{t-})$ is larger than 1. Therefore, one can denote these jump times by $t_{1,1}(\xi),t_{1,2}(\xi),\ldots$ (setting $t_{1,k+1}(\xi)=t_{1,k+2}(\xi)=\ldots :=\infty$ if there are $k$ jumps larger than 1) and obtain thus a numbering.  If $d(\xi_t,\xi_{t-})\in (\frac{1}{n+1},\frac{1}{n}]$ for some $n\geq 1$, we can do the same procedure using $t_{n,1}(\xi),t_{n,2}(\xi),\ldots$ and so on.
\begin{lemma}\label{lem:jumpsmeasurable}
Given $n$ and $j$, the map $t_{n,j}\colon \nD(T;E)\arre T\cup \{\infty\}$ is $\nF_{\nD}$-measurable.
\end{lemma}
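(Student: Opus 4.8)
The plan is to show measurability of $t_{n,j}$ by reducing to a countable dense parameter set and using the weak metric $d$ from Lemma~\ref{lem:countablejumps} to make the relevant quantities $\nF_{\nD}$-measurable. The key structural observation is that the cylinder $\s$-algebra $\nF_{\nD}$ is generated by the coordinate maps $x_t$, so any function on $\nD(T;E)$ that can be written as a limit/combination of countably many coordinate evaluations composed with Borel maps on $E$ will be measurable. The obstacle is that the jump function $\D\xi(s)=x_s(\xi)-x_{s-}(\xi)$ involves the left-limit map $x_{s-}$, which is an \emph{a priori} uncountable limit, and that $t_{n,j}$ selects jump times by an ordering, which is not obviously expressible through countably many coordinates.

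First I would fix the continuous weak metric $d$ on $E$ (which exists by the preliminaries, since a countable separating family in $E'$ yields a continuous metric), and work with the scalar \cadlag path $s\mapsto d(x_s(\xi),x_{s-}(\xi))$ measuring jump sizes, exactly as in the numbering construction preceding the lemma. By right-continuity in the metric $d$, the left limit $x_{s-}(\xi)=\lim_{r\arrne s,\,r\in T_0}x_r(\xi)$ can be computed along a fixed countable dense set $T_0\subseteq T$, so the map $\xi\mapsto d(x_s(\xi),x_{s-}(\xi))$ is $\nF_{\nD}$-measurable for each fixed $s$ (a countable $\limsup$/$\liminf$ of maps of the form $\xi\mapsto d(x_r(\xi),x_{r'}(\xi))$, each measurable because $d$ is continuous hence Borel and $x_r,x_{r'}$ are coordinate maps). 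This handles the passage from the uncountable left-limit to a countable description, which is the crucial first reduction.

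Next I would express the event $\{t_{n,j}\le u\}$ in terms of countably many such measurable quantities. For a threshold level $n$, a jump at $s$ of $d$-size in $(\tfrac1{n+1},\tfrac1n]$ is detected along rationals: the condition that there is a jump exceeding a given size in a subinterval is, by Lemma~\ref{lem:uniformity} (uniform $d$-smallness off finitely many jumps), equivalent to a statement quantified over pairs $s,t$ in $T_0$ in that subinterval, i.e.\ a countable condition. I would therefore write, for rational endpoints, the counting functional $\xi\mapsto \#\{s\le u:\ d(\Delta\xi(s))\text{ lies in the }n\text{-th band}\}$ as a measurable $\NN\cup\{\infty\}$-valued map (a monotone limit over finer and finer rational partitions of the number of subintervals carrying a jump in the band), and then recover $\{t_{n,j}\le u\}$ as the event that this count is at least $j$. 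Since all the ingredients are countable suprema, infima, and limits of $\nF_{\nD}$-measurable functions, the level sets $\{t_{n,j}\le u\}$ for rational $u$ are in $\nF_{\nD}$, and by right-continuity this suffices for measurability of $t_{n,j}$ into $T\cup\{\infty\}$.

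The main obstacle I expect is the bookkeeping that makes the ordering of jump times measurable: one must argue that selecting the $j$-th jump time in the $n$-th size band is a genuinely measurable operation, not merely that the set of jump times is countable. The clean way to do this is to avoid ordering the jumps directly and instead work with the counting functionals $N_n(u,\xi)=\#\{s\le u:\ d(\Delta\xi(s))\in(\tfrac1{n+1},\tfrac1n]\}$, establish their joint measurability in $(u,\xi)$ along rational $u$ via partition refinements (here Lemma~\ref{lem:uniformity} guarantees that on any bounded interval only finitely many jumps fall in a fixed band, so the count is finite and the refinement stabilises), and then use the identity $\{t_{n,j}\le u\}=\{N_n(u,\cdot)\ge j\}$ together with the convention $t_{n,k+1}=\infty$ when there are only $k$ such jumps. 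This reduces the whole statement to the measurability of the $N_n(u,\cdot)$, which follows from the first two steps.
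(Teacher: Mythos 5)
Your proposal is correct and is essentially the paper's own argument: the paper proves this lemma by citing the proof of Lemma~20.9 in Sato \cite{sato:99} verbatim, with the modulus replaced by the weaker continuous metric $d$, and your reconstruction (left limits along a countable dense set, jump detection over rational pairs, counting functionals per size band made finite via Lemma~\ref{lem:uniformity}, and $\{t_{n,j}\le u\}=\{N_n(u,\cdot)\ge j\}$) is exactly that Sato-style argument adapted to $d$. Nothing further is needed.
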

The proof is exactly the same as in \cite[Proof of Lemma~20.9]{sato:99} replacing the modulus by $d$ which is a continuous metric on $E$.

\begin{lemma}\label{lem:leftlim}
For $t\in T$, $t>0$, the left limit mapping $x_{t-}\colon \nD(T;E)\rightarrow E$ is $\nF_{\nD}-\nB(E)$-measurable.
\end{lemma}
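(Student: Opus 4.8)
The plan is to realise the left limit $x_{t-}$ as a pointwise limit of coordinate functions and then to transfer the classical measurability theorem for limits of metric-space-valued maps to $E$ by exploiting its Suslin structure.

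First I would fix, for the given $t>0$, a strictly increasing sequence $(s_n)_{n\in\NN}$ in $T$ with $s_n<t$ and $s_n\arrne t$ (e.g.\ rationals in $(0,t)\cap T$). For every $\xi\in\nD(T;E)$ the left limit exists by the \cadlag property, and since any sequence approaching $t$ from the left converges to it, one has
$$x_{t-}(\xi)=\lim_{n\to\infty} x_{s_n}(\xi)\quad\text{in }(E,\t).$$
Each coordinate map $x_{s_n}$ is $\nF_{\nD}$-$\nB(E)$-measurable by the very definition of $\nF_{\nD}$, so $x_{t-}$ is a pointwise limit of measurable maps. The difficulty, and the only genuine obstacle, is that a pointwise limit of measurable maps into a non-metrizable space need not be measurable, so the classical argument cannot be applied directly in $(E,\t)$.

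To circumvent this I would pass to a weaker metric. By the preliminaries there is a continuous metric $d$ on $E$; the identity map $(E,\t)\arre(E,d)$ is then a continuous bijection, exhibiting $(E,d)$ as a separable space (continuous image of the separable $(E,\t)$) which is Suslin (continuous image of a Suslin space and Hausdorff) and whose topology $\t_d$ is coarser than $\t$. The two topologies $\t$ and $\t_d$ are comparable Suslin topologies, so Proposition~\ref{prop:comparablesuslin} yields $\nB(E,\t)=\nB(E,\t_d)$; in particular $\nB(E)$-measurability and measurability into the separable metric space $(E,d)$ coincide, and each $x_{s_n}$ is measurable as a map into $(E,d)$. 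Moreover $\t$-convergence implies $d$-convergence, so the displayed identity also holds in $(E,d)$.

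Finally I would invoke the classical fact that a pointwise limit of measurable maps into a separable metric space is measurable: for fixed $y\in E$ the functions $\xi\mapsto d\big(x_{s_n}(\xi),y\big)$ are measurable and converge to $\xi\mapsto d\big(x_{t-}(\xi),y\big)$, whence the latter is measurable; writing each open set of $(E,d)$ as a countable union of balls $\{\,d(\cdot,y)<r\,\}$ then gives measurability of $x_{t-}$ into $(E,d)$, i.e.\ into $(E,\nB(E))$. Everything after the reduction to $(E,d)$ is routine; the whole point is that the Suslin property, via Schwartz's theorem, lets us replace the uncountable neighbourhood base of $E$ by a single separable metric whose Borel structure is unchanged.
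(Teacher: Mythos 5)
Your proof is correct and follows essentially the same route as the paper: both arguments pass to a weaker continuous metric $d$ on $E$, identify $\nB(E,\t)=\nB(E,d)$ via Proposition~\ref{prop:comparablesuslin}, note that the $\t$-left limit coincides with the $d$-left limit, and then reduce measurability to a countable approximation along times increasing to $t$. The only cosmetic difference is that the paper checks measurability by writing the preimage of a $d$-ball explicitly as a countable condition over rational times $t-q$, while you invoke the standard limit theorem for maps into separable metric spaces via the distance functions $\xi\mapsto d\big(x_{s_n}(\xi),y\big)$.
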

\begin{proof}
Let $d$ be a weaker metric on $E$. We note that $\nB(E,\t)=\nB(E,d)$ and the Borel sets are generated by open $d$-balls $B_x(\e):=\{y\in E: d(x,y)<\e\}$, $x\in E$ due to $(E,d)$ being a separable metric space and therefore second countable. Furthermore, the $\s$-algebra $\nF_{\nD}$ on $\nD(T; (E,\t))$ only depends on $\nB(E)$ and not the precise topology generating it. We define $x_{t-}^d(\xi):=d-\lim_{s\arrne t} x_s(\xi)$ for $\xi \in\nD(T;(E,\t))$ and show that $x_{t-}^ d(\xi)=x_{t-}(\xi)$. But this follows from $(E,d)$ being Hausdorff and the continuity of the injection $(E,\t)\hookrightarrow (E,d)$ and therefore
$$x_{t-}(\xi)=\lim_{s\arrne t} x_s(\xi) = d-\lim_{s\arrne t} x_s(\xi)=x_{t-}^d(\xi)\quad \text{for }\xi\in \nD(T; (E,\t)).$$
We test measurability on a generator of $\nB(E)$ and obtain
\begin{align*}
&(x_{t-})^{-1}(B_x(\e)) = (x^d_{t-})^{-1}(B_x(\e)) \\
&= \big\{\xi \in \nD(T; (E,\t)):\, \exists n\in\NN\;\forall q\in\QQ\cap (0,1/n)\colon \; x_{t-q}(\xi)\in B_x(\e)\big\}\in \nF_{\nD},
\end{align*}
which proves the lemma.
\end{proof}

\begin{lemma}\label{lem:jointlymeasurable}
The maps $x\colon \nD(T;E)\times T\rightarrow E$ defined by $(\xi,t)\mapsto x_t(\xi)$ and $\conj{x}\colon \nD(T;E)\times T\rightarrow E$ defined by $(\xi,t)\mapsto x_{t-}(\xi)$ are $\nF_{\nD}\ten \nB(T)- \nB(E)$-measurable.
\end{lemma}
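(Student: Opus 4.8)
The plan is to prove joint measurability of both maps $x$ and $\conj{x}$ by the standard technique of approximating them by \emph{simple functions} that are piecewise constant in the time variable, exploiting the c\`adl\`ag regularity of each path $\xi$. The key observation is that a c\`adl\`ag function is determined by its values on a countable dense set and is right-continuous, so dyadic-rational time-discretisation will converge pointwise while keeping each approximant measurable in $(\xi,t)$ jointly.

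First I would treat the map $x$. Fix a dense countable set of dyadic partition points and, for each $m\in\NN$, define the approximating map $x^{(m)}(\xi,t):=x_{t_k^m}(\xi)$ where $t_k^m$ is the right dyadic endpoint of the subinterval $[t_{k-1}^m,t_k^m)$ containing $t$ (with the convention handling the right endpoint $t_{\max}$). Each $x^{(m)}$ is measurable as a map $\nD(T;E)\times T\arre E$: on each product set $\nD(T;E)\times[t_{k-1}^m,t_k^m)$ it equals the single coordinate map $x_{t_k^m}$, which is $\nF_{\nD}$-measurable by definition of the cylinder $\sigma$-algebra, and the time-intervals form a finite measurable partition of $T$; hence $x^{(m)}$ is $\nF_{\nD}\ten\nB(T)$-measurable. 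By right-continuity of $\xi$, for every fixed $(\xi,t)$ the right endpoints $t_k^m\arrse t$ as $m\arre\infty$, so $x^{(m)}(\xi,t)=x_{t_k^m}(\xi)\arre x_t(\xi)$ in $(E,\t)$, and a fortiori in the weaker metric $d$ on $E$ furnished by the Suslin property. Since $(E,d)$ is a separable metric space with $\nB(E,d)=\nB(E,\t)=\nB(E)$, a pointwise $d$-limit of $\nF_{\nD}\ten\nB(T)$-measurable maps into $(E,\nB(E))$ is again measurable. This gives joint measurability of $x$.

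For the map $\conj{x}$ the argument is analogous, now using the \emph{left} endpoints together with Lemma~\ref{lem:leftlim}. I would set $\conj{x}^{(m)}(\xi,t):=x_{s_k^m}(\xi)$ where $s_k^m$ is a dyadic point strictly below $t$ chosen in the subinterval containing $t$; alternatively one approximates $x_{t-}(\xi)$ directly by $x_{t-1/n}(\xi)$ along rationals, exactly as in the proof of Lemma~\ref{lem:leftlim}. Each such approximant is a countable combination of coordinate maps on measurable time-slices, hence $\nF_{\nD}\ten\nB(T)$-measurable, and the left-limit existence guaranteed by the c\`adl\`ag property (Lemma~\ref{lem:leftlim} shows the $\t$-limit coincides with the $d$-limit) yields pointwise convergence $\conj{x}^{(m)}(\xi,t)\arre x_{t-}(\xi)$ in $d$. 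The same separable-metric limit argument concludes measurability of $\conj{x}$.

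The main obstacle I anticipate is the passage from pointwise convergence to measurability in the infinite-dimensional target $E$: in a general locally convex space, $E$ is not metrisable, so one cannot directly invoke the classical fact that a pointwise limit of measurable maps is measurable. The essential device is the \emph{weak metric} $d$ with $\nB(E,d)=\nB(E)$, whose existence is secured by the Suslin assumption and recalled in the Preliminaries; this reduces the target to a separable metric space on which the limit theorem is valid, while leaving the Borel structure (and hence the cylinder $\sigma$-algebra $\nF_{\nD}$, which depends only on $\nB(E)$) unchanged. A secondary technical point is bookkeeping at partition boundaries and at the terminal time $t_{\max}$, but this is routine once the convergence has been reduced to the right-continuity of $\xi$ (for $x$) and the existence of left limits (for $\conj{x}$).
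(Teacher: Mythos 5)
Your proposal is correct and follows essentially the same route as the paper: the paper's proof also approximates $x$ by the jointly measurable dyadic-time discretisations taking right endpoints (and $\conj{x}$ by taking left endpoints), uses right-continuity resp.\ existence of left limits for pointwise convergence, and passes to the limit in the weaker metric $d$ with $\nB(E,d)=\nB(E)$ to conclude measurability. The only cosmetic difference is your alternative suggestion of approximating $x_{t-}$ by $x_{t-1/n}$ along rationals, which the paper does not use here (it reserves that device for Lemma~\ref{lem:leftlim}).
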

\begin{proof}
For all $t\in T$ the map $x_t\colon \nD(T;E)\rightarrow E$ is $\nF_{\nD}-\nB(E)$-measurable. Using right-continuity of $\xi$ in the $d$-topology we obtain that the $\nF_{\nD}\ten\nB(T)$-measurable maps
$$x_s^{(n)}(\xi):=\sum_{k=0}^{2^{n}-1}x_{t_{\max} (k+1)2^{-n}}(\xi) \cf_{\big(\frac{t_{\max}k}{2^n},\frac{t_{\max}(k+1)}{2^n}\big]}(s),\quad  \text{if }s\in T=[0,t_{\max}]$$
or
$$x_s^{(n)}(\xi):=\sum_{k=0}^{4^{n}-1}x_{(k+1)2^{-n}}(\xi) \cf_{\big(\frac{k}{2^n},\frac{(k+1)}{2^n}\big]}(s),\quad \text{if }s\in T=[0,\infty),$$
respectively, converge to $x_s(\xi)$ in the $\t$- and the $d$-topology as $n$ tends to infinity. The latter convergence yields the desired measurability of the joint map.\\
For $\conj{x}$ the proof is exactly the same taking the left endpoints $x_{t_{\max}k2^{-n}}$ resp.\ $x_{k2^{-n}}$ in the approximating sums.
\end{proof}

\section{Wiener processes in locally convex spaces}\label{sec:brownianmotion}

We consider Gaussian random variables with values in a locally convex space $E$. Probably the most comprehensive monograph concerning Gaussian measures on locally convex topological vector spaces is \cite{bogachev}, the following concepts are taken mainly from chapters~2 and~3. See also e.g. \cite{billingsley,maasphd}.
A random variable $X\colon \Omega \rightarrow E$ is called Gaussian, if for all $a\in E'$ the real valued random variables $\<X,a\>=a(X)$ are Gaussian. Let $\r$ be a Gaussian measure. A mapping $q\colon E\rightarrow \RR_+$ is a \emph{$\r$-measurable seminorm} if there exists a $\nB(E)$-measurable linear subspace $E_0\subseteq E$ with $\r(E_0)=1$ and such that the restriction $q|_{E_0}$ is a seminorm on $E_0$. Obviously, $\|\cdot\|_K$ is a $\r$-measurable seminorm if $\r(E_K)=1$.

\begin{lemma}[Zero-one law for Gaussian measures, {\cite[Theorem 2.5.5]{bogachev}}]
Let $E_0\subseteq E$ be a $\nB(E)$-measurable affine subspace and $\r$ a Gaussian measure. Then, $\r(E_0)\in \{0,1\}$.
\end{lemma}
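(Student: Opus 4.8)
The plan is to reduce the statement to the case of a \emph{centred} Gaussian measure and a \emph{linear} subspace, and then to exploit the rotation invariance of $\r\ten\r$ that already underlies Lemma~\ref{lem:gaussianmeasurecomparable}. First I would write $\r$ as the translate by its mean $m$ of a centred Gaussian measure, which (after renaming) lets me assume $\r$ centred: indeed $\r(E_0)=\r((E_0-m))$ and $E_0-m$ is again an affine subspace. If $E_0=b+L$ with $b\in L$ then $E_0=L$ is linear and the core case below applies. If $b\notin L$, set $\wt L:=L+\RR b$, which is measurable (being the continuous, hence Suslin, image of $L\times\RR$ under $(x,t)\mapsto x+tb$, and $\r$ is Radon). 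The core linear case applied to $\wt L$ gives $\r(\wt L)\in\{0,1\}$; if it is $0$ we are done, and if it is $1$ the quotient $\wt L/L\cong\RR$ induces a $\r$-measurable linear functional $q$ with $q(x+tb)=t$. Since measurable linear functionals of a Gaussian measure are Gaussian (\cite{bogachev}), $\r\circ q^{-1}$ is a Gaussian measure on $\RR$, and $E_0=q^{-1}(\{1\})$; a one-dimensional Gaussian charges a single point only if it is a Dirac mass, so $\r(E_0)\in\{0,1\}$.

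It remains to treat a measurable linear subspace $L$ with $p:=\r(L)$. Here I would use that $\r\ten\r$ is invariant under every rotation $R_\th(x,y)=(x\cos\th-y\sin\th,\,x\sin\th+y\cos\th)$, the same characterisation (\cite[Prop.~2.2.10]{bogachev}) invoked in Lemma~\ref{lem:gaussianmeasurecomparable}. Put $A:=\{(x,y)\in E\times E:x\in L\}$, so that $\r\ten\r(A)=p$ and, by invariance, $\r\ten\r(R_\th^{-1}A)=p$, where $R_\th^{-1}A=\{(x,y):x\cos\th-y\sin\th\in L\}$. Because $L$ is linear, for $\th\in(0,\pi/2)$ and $x\in L$ one has $x\cos\th\in L$, whence $x\cos\th-y\sin\th\in L$ iff $y\in L$; therefore $A\cap R_\th^{-1}A=\{x\in L\}\cap\{y\in L\}$, which has measure $p^2$ by independence of the two factors of $\r\ten\r$. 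In other words, $A$ and its rotate $R_\th^{-1}A$ are \emph{independent} events of equal probability $p$ for every $\th\in(0,\pi/2)$.

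Letting $\th\to 0^+$, the set $R_\th^{-1}A$ tends to $A$, so that $A$ becomes independent of itself, forcing $p=p^2$ and hence $p\in\{0,1\}$. The main obstacle is making this last limit rigorous: $\cf_L$ is not continuous, and the laws of $(x,\,x\cos\th-y\sin\th)$ converge only weakly to the law of $(x,x)$ supported on the diagonal, which does not control the measure of the non-open, non-closed set $L\times L$. This right-continuity at $\th=0$ is precisely the substantive content of \cite[Theorem~2.5.5]{bogachev}. I expect it to be secured not by a continuity estimate but by approximating $L$ up to a $\r$-null set through a reverse martingale over an orthonormal basis of the Cameron--Martin space --- equivalently, by realising $\cf_L$ as a tail event of an i.i.d.\ standard Gaussian sequence and applying Kolmogorov's zero-one law --- which circumvents the continuity issue and delivers $\r(L)\in\{0,1\}$ directly.
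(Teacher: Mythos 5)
First, a point of comparison: the paper does not prove this lemma at all --- it is imported verbatim from \cite[Theorem~2.5.5]{bogachev} --- so your attempt can only be measured against that reference, whose proof runs through the Cameron--Martin space rather than rotations. Your affine-to-linear reduction is essentially sound: the mean exists because every Gaussian measure on a Suslin space is Radon, the measurability of $q$ on $\wt L$ follows from the Lusin--Suslin theorem (injective continuous images of Borel sets are Borel) exactly as you indicate, and $\r$-measurable linear functionals of a centred Gaussian measure are indeed Gaussian, so an atom at $1$ has mass zero. The genuine gap is the linear core case, which you leave unproven. You are right that the limit $\th\arre 0^+$ cannot be made rigorous, but your proposed rescue is incorrect as stated: $\{x\in L\}$ is \emph{not} in general a tail event of the i.i.d.\ coordinates $\xi_n$ in a series representation $x=\sum_n \xi_n e_n$, since changing $\xi_1$ moves $x$ by a multiple of $e_1$, which preserves membership in $L$ only if $e_1\in L$. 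Kolmogorov's zero-one law thus applies only when the Cameron--Martin space $H(\r)$ lies in $L$; in the complementary case one must argue differently, e.g.\ pick $h\in H(\r)\setminus L$ and note that if $\r(L)>0$ then by the Cameron--Martin theorem the uncountably many pairwise disjoint translates $L+th$, $t\in\RR$, would all have positive measure, which is impossible. This dichotomy is the actual substance of Bogachev's proof and is missing from your sketch.

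Ironically, the rotation set-up you already built closes the gap with no limiting procedure and no Cameron--Martin machinery. For distinct angles $\th_1,\ldots,\th_n\in[0,\pi/2)$ put $A_{\th_i}:=\{(x,y)\in E\times E:\, x\cos\th_i-y\sin\th_i\in L\}$; each has $\r\ten\r$-measure $p=\r(L)$ by rotation invariance of $\r\ten\r$ ($\r$ centred). For $i\neq j$, the coefficient matrix of the two linear combinations has determinant $\sin(\th_i-\th_j)\neq 0$, so membership of both combinations in the \emph{linear} subspace $L$ forces $x\in L$ and $y\in L$; hence $A_{\th_i}\cap A_{\th_j}=\{x\in L\}\cap\{y\in L\}=:B$, the \emph{same} set for every pair, of measure $p^2$. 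Consequently the sets $A_{\th_i}\setminus B$ are pairwise disjoint, each of measure $p-p^2$, whence $n(p-p^2)\leq 1$ for every $n$, forcing $p(1-p)=0$, i.e.\ $p\in\{0,1\}$. (This is the classical Kallianpur-style argument; your version failed only because you compared $A$ with a single rotate and tried to pass to the limit, instead of using many rotates simultaneously and observing that all pairwise intersections coincide.) With that paragraph inserted, your write-up becomes a complete and self-contained proof, and in fact a more elementary route than the one in the cited reference.
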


For later use, we formulate a proposition about Gaussian measures which can be reduced to a separable Banach subspace.
\begin{proposition}\label{prop:GaussianEK}
Let $\r$ be a centered Gaussian measure on a locally convex Suslin space $E$. If there exists a set $K\in \nK_0^s(E)$ of positive measure, one has
\begin{enumerate}
\item $\r(E_K)=1$,
\item $\r$ is a Gaussian measure on $E_K$ and
\item there exists $\a>0$ such that $\displaystyle \int_{E_K} \ee^{\a \|x\|_K}\dd\r(x)<\infty$.
\end{enumerate}
In particular, if $E$ has a fundamental system of compact separable Banach disks, every Gaussian measure on $E$ has a Banach support.
\end{proposition}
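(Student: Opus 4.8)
The three assertions build on one another, and the natural route is to establish them in order, with the zero-one law for Gaussian measures doing the heavy lifting for the first. First I would prove (1). Since $E_K=\bigcup_{n\in\NN} n\cdot K$ is a countable union of scalar multiples of the Borel disk $K$, it is a $\nB(E)$-measurable linear subspace of $E$. The Gaussian zero-one law (the quoted Theorem~2.5.5 of \cite{bogachev}) then forces $\r(E_K)\in\{0,1\}$. By hypothesis $\r(K)>0$, and since $K\subseteq E_K$ we get $\r(E_K)\geq \r(K)>0$, so the only possibility is $\r(E_K)=1$.

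For (2), the point is that $K$ is a \emph{separable} Banach disk, so $(E_K,\|\cdot\|_K)$ is a separable Banach space whose norm topology is comparable to (finer than) the induced topology from $E$. By Lemma~\ref{lem:EKprops}(2) the Borel structures match: $\nB(E_K)=\nB(E)\cap E_K$ and these coincide with the respective cylindrical $\s$-algebras. Having (1), the restriction $\r\|_{E_K}$ is a well-defined probability measure on $\nB(E_K)$. To check it is Gaussian on the Banach space $E_K$, I would verify that $\ell(\r\|_{E_K}^{-1})$ is a one-dimensional Gaussian for every $\ell\in E_K'$. Since $E'|_{E_K}$ separates points of $E_K$ and is already a family of one-dimensional Gaussian pushforwards by hypothesis, one extends to all of $E_K'$; Lemma~\ref{lem:gaussianmeasurecomparable} (Gaussianity is invariant under passing to a comparable Suslin topology) makes this transfer clean, so $\r$ is Gaussian with respect to $(E_K,\|\cdot\|_K)$.

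Assertion (3) is the Fernique-type exponential integrability statement, and this is where the main work lies. Once (2) places us on a genuine separable Banach space, $\|\cdot\|_K$ is a measurable (indeed continuous) norm there, and the classical Fernique theorem for Gaussian measures on separable Banach spaces yields some $\a>0$ with $\int_{E_K}\ee^{\a\|x\|_K^2}\dd\r(x)<\infty$, which in particular gives $\int_{E_K}\ee^{\a\|x\|_K}\dd\r(x)<\infty$ after possibly shrinking $\a$. The only subtlety is to confirm that $\|\cdot\|_K$ is honestly the Banach-space norm to which Fernique applies, rather than merely a $\r$-measurable seminorm in the sense recalled before the proposition; but this is exactly guaranteed by (2). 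The concluding remark follows immediately: if $E$ has a fundamental system of compact separable Banach disks, then by tightness of $\r$ there is a compact set of positive measure, and enlarging it to some $K\in\nK_0^s(E)$ within the fundamental system puts us in the situation of the proposition, whence $\r(E_K)=1$ and $\r$ lives on the separable Banach subspace $E_K$.

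\textbf{Main obstacle.} The principal point requiring care is making precise that the abstract $\nB(E)$-measurable object $\|\cdot\|_K$ becomes a bona fide continuous norm on the separable Banach space $E_K$ after restriction, so that the full strength of Fernique's theorem is available; the compatibility of $\s$-algebras from Lemma~\ref{lem:EKprops} and the invariance of Gaussianity from Lemma~\ref{lem:gaussianmeasurecomparable} are precisely the tools that remove this friction.
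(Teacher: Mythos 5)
Your proposal is correct and follows essentially the same route as the paper: the Gaussian zero-one law applied to the $\nB(E)$-measurable subspace $E_K$ for (1), the transfer of Gaussianity between comparable Suslin topologies via Lemma~\ref{lem:gaussianmeasurecomparable} for (2), Fernique's theorem for (3), and Radonness of finite Borel measures on Suslin spaces for the final assertion. The only cosmetic difference is that you invoke the classical Fernique theorem on the separable Banach space $(E_K,\|\cdot\|_K)$ after establishing (2), whereas the paper applies Fernique's theorem for $\r$-measurable seminorms (\cite[Theorem~2.8.5]{bogachev}) directly on $E$, using that $\|\cdot\|_K$ is a $\r$-measurable seminorm once $\r(E_K)=1$; both versions yield (3).
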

The assertions follow from the above cited zero-one law, Lemma~\ref{lem:gaussianmeasurecomparable} and by Fernique's theorem for measurable seminorms, \cite[Theorem~2.8.5]{bogachev}. The last assertion guarantees the existence of all moments of Gaussian random variables, whenever there exists a compact set of positive measure. But this follows from $E$ being a Suslin, thus a Radon space.

Let $w_t$ be a real-valued standard Brownian motion, $\s\geq 0$ and $\g\in \RR$. We consider every real-valued process of the form $y_t=\s w_t+\g t$ a one-dimensional Brownian motion. 
\begin{definition}
A continuous L\'evy process with values in $E$ is called \emph{Wiener process}.
\end{definition}
We provide the following theorem characterising Wiener processes in $E$.
\begin{theorem}\label{thm:BMinE}
For a stochastic process $(W_t)_t$ with values in $E$ the following assertions are equivalent:
\begin{enumerate}
\item $(W_t)_t$ is a Wiener process.
\item $(W_t)_t$ is a continuous process with independent increments such that $\<W_t,a\>_t$ is a (possibly degenerate) one-dimensional Brownian motion for all $a\in E'$.
\item $(W_t)_t$ is a \cadlag L\'evy process with Gaussian distributed increments.
\item $(W_t)_t$ is a \cadlag Gaussian process determined by the mean
$\g\in E$ (i.e.\ $\g$ satisfies $\EE \<W_t,a\>=\<\g,a\>$ for all $a\in E'$) and a symmetric, positive semidefinite operator $Q\in \nL(E',E)$, where $E'$ is equipped with the Mackey topology $\t_{\mu}(E',E)$, such that
\begin{align*}
\EE \<W_t-\g t,a\>\<W_s-\g s,b\> = \<Q a,b\> \min\{s,t\}
\end{align*}
for all $s,t\in T$ and $a,b\in E'$.
\end{enumerate}
\end{theorem}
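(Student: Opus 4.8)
The plan is to prove the four statements are equivalent by establishing the cycle $(1)\Rightarrow(2)\Rightarrow(3)\Rightarrow(4)\Rightarrow(1)$. The single structural idea running through every implication is that, since $E$ is a complete locally convex Suslin space, one has $\nE(E)=\nB(E)$, so the law of any $E$-valued random vector is determined by its one-dimensional projections $\langle\,\cdot\,,a\rangle$, $a\in E'$, together with uniqueness of Fourier transforms. Consequently Gaussianity, stationarity, independence and the convolution-semigroup property can all be checked after applying functionals $a\in E'$ and then reassembled. Throughout I would use the classical one-dimensional fact that a continuous real-valued L\'evy process is exactly a Brownian motion with drift $\sigma w_t+\gamma t$.

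For $(1)\Rightarrow(2)$, a Wiener process is by definition a continuous L\'evy process, hence has independent increments by (L1); for fixed $a\in E'$ the real process $\langle W_t,a\rangle$ is continuous and inherits stationary independent increments, so it is a continuous real L\'evy process, i.e.\ a one-dimensional Brownian motion. For $(2)\Rightarrow(3)$, continuity trivially gives the \cadlag property; each increment $\langle W_t-W_s,a\rangle$ is a Brownian increment and hence normally distributed, so $W_t-W_s$ is Gaussian by the definition of a Gaussian random variable; moreover $W_0=0$ and stationarity of increments follow at the level of characteristic functionals, which depend only on the length $t-s$, while the semigroup property and weak continuity as $t\searrow 0$ follow from independent stationary increments and path continuity. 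For $(3)\Rightarrow(4)$, the vector $(W_{t_1},\dots,W_{t_n})$ is a linear image of the independent Gaussian increments and hence jointly Gaussian, so $(W_t)$ is a Gaussian process; here I would invoke the theory of Radon Gaussian measures (every finite Borel measure on a Suslin space is Radon) to produce the barycenter $\gamma=\gamma_1\in E$ and the covariance operator $Q\in\nL(E',E)$, continuous for the Mackey topology, and then compute the stated covariance by splitting $W_t-\gamma t$ into the increment over $[0,s]$ plus an independent centered increment.

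The delicate implication is $(4)\Rightarrow(1)$, which splits into two parts. First, the Gaussian-process hypothesis makes every finite family of functionals jointly Gaussian, and the covariance $\langle Qa,b\rangle\min\{s,t\}$ makes increments over disjoint intervals uncorrelated, hence independent; reassembling via $\nE(E)=\nB(E)$ yields independence of the $E$-valued increments, and since $W_{t+s}-W_t$ has the Brownian characteristic functional depending only on $s$, also stationarity, so $(W_t)$ is a L\'evy process. The genuine obstacle is upgrading the assumed \cadlag regularity to continuity in the Suslin topology: I would fix a sequence $(a_n)\subseteq E'$ separating the points of $E$ (available by the preliminaries), note that each $\langle W_t,a_n\rangle$ is a one-dimensional Brownian motion and hence almost surely continuous, and intersect the countably many full-measure sets; on the resulting event no functional $\langle\,\cdot\,,a_n\rangle$ jumps, and since the $(a_n)$ separate points, a nonzero jump $\Delta W_{t_0}$ would be detected by some $a_n$, a contradiction. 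Thus $W$ is almost surely continuous, closing the cycle. I expect this continuity step, together with the correct bookkeeping of the Gaussian measure-theoretic inputs, namely that $\gamma$ lies in $E$ and $Q$ is Mackey-continuous, to be the only non-routine points.
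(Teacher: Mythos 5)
Your proposal is correct, and your implications $(1)\Rightarrow(2)\Rightarrow(3)\Rightarrow(4)$ run essentially as in the paper: the same reduction to one-dimensional projections via $\nE(E)=\nB(E)$ and Fourier uniqueness, and the same appeal to the Radon--Gaussian theory for the barycenter $\g\in E$ and the Mackey-continuous covariance $Q$ (do spell out the scaling $Q_t=tQ_1$, $\g_t=t\g_1$, which the paper obtains by telescoping over equidistant nets for rational $t$ and a \cadlag limit for general $t$; your ``splitting'' sketch glosses this). Where you genuinely diverge is the hard step, upgrading \cadlag regularity to continuity. The paper factors this as $(4)\Rightarrow(3)$ plus $(3)\Rightarrow(1)$ and proves the latter with substantial machinery: the Gaussian zero-one law confines the marginals to $E_H$ for a compact disk $H$ of positive measure, a submartingale/Doob/Fernique argument confines whole trajectories to $n(\om)\cdot H$, and continuity is then transferred from the continuous version of Feyel--de la Pradelle through uniform-continuity events over rational times, which are distribution-determined because the compacts $n\cdot H$ are metrizable. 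You instead fix a countable point-separating sequence $(a_n)\subseteq E'$ (available by the Suslin preliminaries), observe that each $\<W,a_n\>$ is a \cadlag real L\'evy process with Gaussian increments and hence a.s.\ continuous (its L\'evy measure vanishes by uniqueness of the triplet, so it a.s.\ has no jumps), intersect the countably many full-measure events, and detect any putative jump $\D W_{t_0}\neq 0$ by some $a_n$; since $a_n$ is $\t$-continuous it maps the $E$-valued left limit to the real left limit, giving a contradiction, and a \cadlag path without jumps is continuous. This argument is valid and markedly more elementary, requiring only the separating sequence and the classical one-dimensional L\'evy--It\^o fact. What the paper's longer route buys, and yours does not, is the additional structural information that almost every trajectory lies in a fixed multiple $n(\om)\cdot H$ of a compact disk, hence in a compactly embedded Banach subspace $E_H$ -- exactly the fact reused later (proof of Corollary~\ref{cor:Xt0}) to place the process $X_t^0=\g t+W_t+J_t$ in a separable Banach subspace; so your shortcut settles the equivalence stated in Theorem~\ref{thm:BMinE} but would have to be supplemented by the paper's confinement argument where that corollary invokes it.
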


\begin{proof}
(1) $\Longrightarrow$ (2): If $W$ is a continuous L\'evy process, the same holds for $\<W,a\>$ and it is well-known that a continuous L\'evy process in $\RR$ is a Brownian motion with drift of the form $\s_a w_t + \g_a t$ with $\s_a\geq 0$ and $\g_a \in \RR$ and $w=(w_t)_t$ is a real-valued standard Brownian motion. \sloppypar
(2) $\Longrightarrow$ (3): If $W$ is continuous, it is c\'adl\'ag. For the stationary increments property we note that the distribution of $W_t-W_s$ equals the distribution of $W_{t-s}$ on the $\pi$-system of cylindrical sets, thus on $\nE(E)=\nB(E)$. Furthermore, for all $a\in E'$ we have that $\PP_{\<W_t,a\>}$ is real-valued Gaussian which means that $W_t$ is Gaussian distributed for all $t$. Gaussian distributed increments follow from stationarity.\sloppypar
(3) $\Longrightarrow$ (4)  If $W_t$ obeys a Gaussian law, there exists an element $\g_t\in E$ and a continuous operator $Q_t\colon E'\arre E$ (where $E'$ is equipped with the Mackey topology) such that $\<\g_t,a\>=\EE\<W_t,a\>$  and $\<Q_t a,b\>=\EE \<W_t-\g t,a\>\<W_t-\g t,b\>$ for all $a,b\in E'$ by cf.\ \cite[Lemma~3.2.1]{bogachev}. \\
For simplicity, assume for the following that $\g=0$. The independent increments property yields
$$\EE\<W_t,a\>\<W_s,b\>= \EE\<W_t-W_s,a\>\<W_s,b\>+\EE\<W_s,a\>\<W_s,b\>=\<Q_s a,b\>$$
and one obtains the equality
$$\<Q_t a,b\>= n\<Q_{t/n} a,b\> $$
for every $n\in\NN$ and $t\in T$ by writing $W_t$ as a telescoping sum over an equidistant time net and using independent and stationary increments.
Finally, one obtains $Q_t=tQ_1=tQ$ by $W_t$ being \cadlag and using
$$\<Q_t a,a\>=\lim_{s\in\QQ,s\arrse t}\EE\<W_s,a\>\<W_s,a\> = \lim_{s\in\QQ,s\arrse t}s\<Qa, a\>=t\<Qa, a\>$$
and polarisation. Similarly one obtains $\g_t=t \g_1=t \g$.\sloppypar
(4) $\Longrightarrow$ (3): It suffices to check independent and stationary increments of the given process, but this follows immediately by the covariance structure of the occuring Gaussian vectors.\sloppypar
(3) $\Longrightarrow$ (1): In the sequel, let
$$S_\QQ:=\begin{cases}(S\cap \QQ) \cup \{ \max S \}& \max S \text{ exists},\\ S\cap \QQ & \text{else}\end{cases}$$ for some interval $S\subseteq [0,\infty)$. Let $\r_t:=\PP_{W_t}$ and $H\subseteq E$ be an absolutely convex compact set with $\r_1(H)>0$. Without loss of generality we assume that $\EE W_t=0$ for all $t$, i.e.\ all Gaussian distributions are centered. Then, the zero-one law for Gaussian measures, cf.\  \cite[Theorem~2.5.5.]{bogachev}, implies that $\r_1(E_H)=1$, where $E_H$ is the linear hull of $H$. Furthermore, for every $m\in\NN$, one has
$$\r_m(E_H)=\r_1^{* m}(E_H) = \r^{*m}(E_H+\ldots+E_H)\geq \r(E_H)^m\geq 1,$$
which implies together with Lemma~\ref{lem:unifH} below that $\r_t(E_H)=1$ for all $t\in T$.
The compact sets $n\cdot H\subseteq E$ are metrizable, \cite[Propositions~A.1.7 and~A.3.16]{bogachev} and we denote by $d_n$ a metric inducing the same topology on $n\cdot H$.\sloppypar
We take $T=[0,t_0]$, $t_0\geq 0$. If the claim holds on all bounded intervals, it is true on $\RR_+$. First, let us prove that $\PP(W_t\in E_H,t\in T)=1$. As $\PP_{W_t}(E_H)=\r_t(E_H)=1$ for every $t\in T$, one has $\PP(W_t\in E_H\colon t\in T_\QQ)=1$. For an $\om$ in this set of full measure we consider the trajectory $t\mapsto W_t(\om)$. We follow \cite[Proof of Proposition~7.2.3]{bogachev} and show that there exists an $n(\om)\in \NN$ such that $\{W_t(\om): t\in T_\QQ\}\subseteq n(\om)\cdot H$. To this end, we define the (possibly infinity-valued) process $\eta_t:=\|W_t\|_H$. Let $d$ be a metric on $E$ inducing a weaker topology. Defining the continuous function $\id\colon (E,\t)\rightarrow (E,d)$, the set $Q:=\id(H)$ is compact and absolutely convex in $(E,d)$.  The measurable seminorm $\|\cdot\|_Q$ on $(E,d)$ satisfies $\|\cdot\|_Q=\sup_{i\in\NN} a_i$ with $a_i\in (E,d)'\subseteq E'$, cf.\ \cite[Problem~A.3.27]{bogachev}. Furthermore, $\|x\|_H=\|\id(x)\|_Q$ for all $x\in E$. As in \cite[Proof of Proposition~7.2.3]{bogachev} one deduces from $a_i(W_t)$ being a real-valued martingale that $\eta$ is a submartingale.

Due to Doob's inequality and Fernique's theorem (cf.\ \cite[Theorem~2.8.5]{bogachev}) one has
\begin{align*}
\EE \sup_{t\in T_\QQ} \eta_t^2 \leq 2\EE \eta_{t_0}^2 =2 \int_E \|x\|^2_H\dd \r_{t_0}(x) <\infty
\end{align*}
which implies for $\omega\in\Omega_0$, a set of measure one, that $\sup_{t\in T_\QQ}\|W_t(\omega)\|_H<\infty$. But this yields the existence of an $n(\omega)\in\NN$ with $W_t(\om)\in n(\om)\cdot H$ for all $t\in T_\QQ$.
The limit
\begin{align*}
W_t(\om) = \lim\limits_{\substack{s\arrse t\\ s\in T_\QQ}}W_s(\om)
\end{align*}
is an element of $n(\om)\cdot H$ by completeness. It follows that $\{W_t(\om):t\in T\}\subseteq n(\om)\cdot H\subseteq E_H$ for $\om$ in a set of measure one. \sloppypar
The second part of the proof follows an idea of \cite[Proposition~A.1]{geiss_ylinen}. In \cite[Proposition~5]{feyel_delapradelle_95} a probability space $(\Om',\nF',\PP')$ and a process $(X_t)_{t\in T}$ are constructed such that the latter has the same finite-dimensional distributions as $W$ and continuous trajectories in $E$. As above, it is argued that they are almost surely in $E_H$, where $H\in\nK_0(E)$ with $\r_1(H)>0$. Let
\begin{align*}
\Om'_0&:=\left\{\om \in \Om'\colon \{X_t(\om): t\in [0,t_0]_\QQ\}\subseteq E_H\right\} \text{ and}\\
\Om'_n&:=\left\{\om \in \Om'\colon \{X_t(\om): t\in [0,t_0]_\QQ\}\subseteq n\cdot H\right\},\quad n=1,2,\ldots
\end{align*}
The latter sets form an ascending chain $\Om'_n\subseteq \Om'_{n+1}$ and their union equals $\Om'_0$. By hypothesis, $t\mapsto X_t(\om)$ is continuous for $\om\in\Om'_0$ and it is uniformly continuous as a mapping from $[0,t_0]$ to $n(\om)\cdot H$. In particular, its restriction to $[0,t_0]_\QQ$ is uniformly continuous as well. The set
\begin{align*}
\nO'_n := \bigcap_{k=1}^\infty \bigcup_{m=1}^\infty \bigcap_{\substack{s,t\in  [0,t_0]_\QQ\\|s-t|<\frac{1}{m}}} \left\{\om\in\Om'_n\colon d_n(X_t(\om),X_s(\om))\leq \frac{1}{k}\right\}\in\nF'
\end{align*}
equals $\Om'_n$ and thus $\PP'(\nO'_n)=\PP'(\Om'_n)$.
This property depends only on the distribution of the process. Setting
\begin{align*}
\Om_n&:=\big\{\om \in \Om\colon \{X_t(\om): t\in [0,t_0]_\QQ\}\subseteq n\cdot H\big\},\quad n=1,2,\ldots
\end{align*}
one obtains that
\begin{align*}
\nO_n := \bigcap_{k=1}^\infty \bigcup_{m=1}^\infty \bigcap_{\substack{s,t\in  [0,t_0]_\QQ\\|s-t|<\frac{1}{m}}} \left\{\om\in\Om_n\colon d_n(W_t(\om),W_s(\om))\leq \frac{1}{k}\right\}\in\nF
\end{align*}
has the same measure, $\PP(\nO_n)=\PP'(\nO'_n)$ and the sets are increasing. Let $\nO_0:=\bigcup_{n=1}^\infty \nO_n$  the set of uniformly continuous paths from $[0,t_0]_\QQ$ to $E_H$. Then,
$$\PP(\nO_0)=\lim_{n\arre\infty} \PP(\nO_n) = \lim_{n\arre\infty} \PP'(\nO'_n)=\PP'(\Om_0')=1,$$
therefore, $W(\om)$ is uniformly continuous on $[0,t_0]_\QQ$ for almost all $\om\in\Om$. One can define a unique continuous extension of the uniformly continuous function $W(\om)\colon [0,t_0]_\QQ\arre n(\om)\cdot H$ to the closure $[0,t_0]$ of its domain by setting
\begin{align*}
\wt{W}_t(\om) := \begin{cases} W_t(\om), & t\in\QQ\cap [0,t_0],\text{ and }\om\in\nO_0,\\
\lim\limits_{\substack{s\in \QQ\cap [0,t_0],\\s\arre t}}W_s(\om), & t\in \QQ^c\cap [0,t_0],\text{ and } \om\in\nO_0,\\
0, & \text{else.} \end{cases}
\end{align*}
But as
\begin{align*}
W_{t-}(\om) =\lim_{\substack{s\in [0,t_0]_\QQ\\ s\arrne t}}\!\!W_s(\om) = \lim_{\substack{s\in [0,t_0]_\QQ\\s\arre t}}\!\!W_s(\om) = \wt{W}_t(\om) =\lim_{\substack{s\in [0,t_0]_\QQ\\ s\arrse t}}\!\!W_s(\om) = W_t(\om)
\end{align*}
for $\om\in\nO_0$ and for every $t$ by the \cadlag property of $W$, one obtains that $W$ has already a.s.\ continuous sample paths.
\end{proof}

\begin{lemma}\label{lem:unifH}
Let $(\r_t)_{0\leq t \leq 1}$ be a semigroup of Gaussian measures on $E$. Then,
\begin{enumerate}
\item $\r_t(H)\geq \r_1(H)$ for every measurable absolutely convex set $H$ and $t\in [0,1]$.
\item If $H$ is bounded and $\r_1(H)>0$, $\r_t(E_H)=1$ for all $t$.
\end{enumerate}
\end{lemma}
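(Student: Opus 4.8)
The plan is to reduce statement \emph{(1)} to the scaling structure of a Gaussian convolution semigroup and then feed the resulting positivity into the zero--one law for Gaussian measures quoted above to obtain \emph{(2)}.

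For \emph{(1)} I first note that a nonempty absolutely convex set contains $0$ and is symmetric, and that the semigroup relation $\r_{s+t}=\r_s*\r_t$ forces the mean to depend linearly on time and the covariance to scale linearly. Hence $\r_t$ is the law of $t\,m+\sqrt{t}\,G$, where $m\in E$ is the mean of $\r_1$ (which is a genuine element of $E$ because every Gaussian measure on the Suslin, hence Radon, space $E$ is a Radon Gaussian measure) and $G$ is a centered Gaussian vector distributed as the centered part of $\r_1$. In the centered case $m=0$ the claim is then immediate: for $t\in(0,1]$ one has $t^{-1/2}\geq 1$, so $H\subseteq t^{-1/2}H$ by absolute convexity, and therefore
\[
\r_t(H)=\PP\big(\sqrt{t}\,G\in H\big)=\PP\big(G\in t^{-1/2}H\big)\geq \PP(G\in H)=\r_1(H).
\]
For general $m$ the same scaling yields $\r_t(H)=\PP\big(G\in t^{-1/2}H-\sqrt{t}\,m\big)\geq \PP\big(G\in H-\sqrt{t}\,m\big)$, and it remains to compare the two shifts $\sqrt t\,m$ and $m$. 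Here I would invoke Anderson's inequality (cf.\ \cite{bogachev}): for a centered Gaussian measure and a measurable absolutely convex set $H$, the map $\lambda\mapsto \PP(G\in H-\lambda m)$ is non-increasing on $[0,\infty)$. Since $\sqrt t\leq 1$ this gives $\PP(G\in H-\sqrt t\,m)\geq \PP(G\in H-m)=\r_1(H)$, completing \emph{(1)}.

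For \emph{(2)} I observe that $E_H=\bigcup_{n\in\NN} nH$ is a countable union of measurable sets, hence a $\nB(E)$-measurable linear subspace. By the zero--one law for Gaussian measures quoted above, $\r_t(E_H)\in\{0,1\}$ for every $t$, so it suffices to prove strict positivity. Since $H\subseteq E_H$ and $\r_1(H)>0$, part \emph{(1)} gives $\r_t(E_H)\geq \r_t(H)\geq \r_1(H)>0$ for every $t\in[0,1]$, whence $\r_t(E_H)=1$ for all $t$.

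The routine parts are the scaling identities and the measurability of $E_H$; these follow by testing characteristic functionals against arbitrary $a\in E'$ and reducing to the one-dimensional Gaussian case. The main obstacle is the drift: in the centered case \emph{(1)} is a pure monotonicity-under-dilation statement, but with a nonzero mean the dilation by $t^{-1/2}$ and the translation by $\sqrt t\,m$ interact, so closing the gap genuinely requires Anderson's inequality together with the fact that the mean of $\r_1$ is an element of $E$. I would therefore devote most of the care to justifying that step, the rest being bookkeeping.
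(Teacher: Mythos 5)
Your proof is correct in the case that actually matters, but it takes a genuinely different route from the paper, and the extra generality you aim for (nonzero drift) contains a gap. The paper's proof is a one-line convolution argument that never identifies the law of $\r_t$: from $\r_1=\r_t*\r_{1-t}$ it writes $\r_1(H)=\int_E \r_t(H-y)\dd\r_{1-t}(y)$ and then applies Anderson's inequality \emph{pointwise in the shift} $y$, namely $\r_t(H-y)\leq \r_t(H)$ for centered Gaussian $\r_t$ and absolutely convex $H$ (Bogachev, Theorem~2.8.10), after which integration gives $\r_1(H)\leq\r_t(H)$; part (2) is then the zero-one law, exactly as in your argument. Your route instead reduces to the scaling representation $\r_t=\mathrm{law}(tm+\sqrt{t}\,G)$ and uses dilation monotonicity $H\subseteq t^{-1/2}H$. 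In the centered case this is perfectly sound -- the covariance does scale linearly, since $t\mapsto\<Q_ta,a\>$ is additive and nonnegative, hence linear, and Fourier transforms determine measures on the Suslin space $E$ -- and it is in fact \emph{more} elementary than the paper's proof there: you need no Anderson inequality at all, only balancedness of $H$. What the paper's approach buys is complete independence from any time-regularity: it uses the semigroup identity at the single time $t$ and nothing else.

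The gap is in your treatment of the drift. The algebraic semigroup relation does \emph{not} force $m_t=tm$: the means only satisfy the Cauchy functional equation $m_{s+t}=m_s+m_t$, and without measurability or weak continuity in $t$ (which the lemma does not hypothesise -- the paper's definition of a convolution semigroup is purely algebraic, weak continuity being a separate property) there are pathological additive solutions. Concretely, $\r_t=N(f(t),t)$ on $\RR$ with $f$ additive, nonlinear and $f(1)=0$ is a semigroup of Gaussian measures for which assertion (1) fails, since $f$ is unbounded on every interval. So your sentence ``the semigroup relation forces the mean to depend linearly on time'' is false as stated, and the Anderson-monotonicity step that follows it, while itself a correct standard fact, rests on an unavailable representation. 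To be fair, the paper's own proof also tacitly assumes each $\r_t$ is centered (Anderson's inequality fails for non-centered Gaussians, as the same example shows), and the lemma is only ever invoked after the reduction ``without loss of generality $\EE W_t=0$'' in the proof of Theorem~\ref{thm:BMinE}; read in that intended centered scope, your argument is complete and your drift paragraph is unnecessary. If you do want the drifted statement, you must add weak continuity (or measurability in $t$) of the semigroup as a hypothesis to obtain $m_t=tm$ before invoking Anderson's monotone form.
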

\begin{proof}
By the semigroup property we have $\r_t*\r_{1-t}=\r_1$. Therefore,
\begin{align*}
\r_1(H) \, = \r_{t}*\r_{1-t}(H) &=  \int_E \int_E \cf_H(x+y) \dd \r_t(x)\dd \r_{1-t}(y) \\
& = \int_E \r_t(H-y) \dd \r_{1-t}(y)\\
& \leq \int_E \r_t(H)\dd \r_{1-t}(y) = \r_t(H),
\end{align*}
because for a Gaussian measure $\mu$, $a\in E$ and every absolutely convex set $A$ one has $\mu(A+a)\leq \mu(A)$ by \cite[Theorem~2.8.10]{bogachev}. The second assertion immediately follows by the zero-one law for linear subspaces, cf.\ \cite[Theorem~2.5.5.]{bogachev}.\nocite{bogachev_gauss_lin}
\end{proof}

\section{Jump processes and random measures}\label{sec:jumpprocesses}
Let $X$ be a L\'evy process in $E$ and $\nu\in\nM(E)$ the L\'evy measure of $\mu_1=\PP_{X_1}$ and locally reducible. Let $K\in\nK^s_0$ be a $\nu$-reducing set. In particular, the properties of Proposition~\ref{prop:levyEKequivalences} are valid. The main results of this section will be Theorem~\ref{thm:comppoisint} and Proposition~\ref{prop:orginialprocesses}.
\subsection{A constructed process}
We begin with some general facts about Poisson random measures, which can be found in \cite[Chapters 19 and 20]{sato:99}. Given a $\s$-finite measure space $(\Theta,\nB,\rho)$ one can define a Poisson random measure $\{N'(B),B\in\nB\}$ on some probability space $(\Om',\nF',\PP')$ with intensity measure $\rho$. For now, we choose this measure space to be the finite measure space $(T\times E,\nB(T\times E),\lambda\ten \nu|_{K^c})$. Then, for $\om\in\Om'_0\in \nF'$ with $\PP'(\Om'_0)=1$, the measure $N'(\cdot,\om)$ is supported on a finite number of points of mass 1 on all bounded intervals, and $N'(\{s\}\times E,\omega)$ has values in $\{0,1\}$ for all $s\in T$, cf.\ \cite[Lemma~20.1]{sato:99}.

\begin{definition}
Let $B\in \nB(E)$. The Poisson integral with respect to $N'$ is defined by
\begin{align}\label{eq:PoissonInt}
Y_t(B)(\om) := \int_{(0,t]\times B} x \dd N'(s,x)(\om),\quad \om\in \Om'_0
\end{align}
and $Y_t(\om):=0$ for $\om\in \Om'^c_0$. If $B=E$, we write $Y_t:=Y_t(E)$.
\end{definition}
\begin{proposition}
\begin{enumerate}
\item For $t\in T$, $B\in \nB(E)$, the random variable $Y_t(B)$ defined in \eqref{eq:PoissonInt} is finite.
\item  $(Y_t(B))_t$ is a \cadlag L\'evy process in $E$ and $\PP'_{Y_t(B)}$ has distribution $\ee(t\nu|_{K^c\cap B})$. In particular, its characteristics are  $(0,0,t\nu|_{K^c\cap B},K)$.
\item $N'(B)(\om) = \# \{s: (s,\D Y_s(\om))\in B\setminus \{0\}\}$
\end{enumerate}
\end{proposition}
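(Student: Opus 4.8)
The plan is to exploit that on the measure space $(T\times E,\lambda\ten\nu|_{K^c})$, which is finite on every set $(0,t]\times E$, the Poisson random measure $N'$ places only finitely many atoms of unit mass on $(0,t]\times E$: indeed $N'((0,t]\times E)$ is Poisson distributed with finite parameter $t\,\nu(K^c)$, and $\nu(K^c)<\infty$ since $K$ is $\nu$-reducing. Thus for $\om\in\Om'_0$ I can write $N'(\cdot,\om)=\sum_{j}\d_{(s_j,x_j)}$ with finitely many atoms in $(0,t]\times E$, and then $Y_t(B)(\om)=\sum_{j:\,s_j\le t,\,x_j\in B}x_j$ is a \emph{finite} sum in $E$; this already gives assertion (1). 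Its $\nF'$--$\nB(E)$-measurability follows from the measurable dependence of the atoms on $\om$ in any standard construction of $N'$ (number of atoms Poisson, atoms conditionally i.i.d.\ with law proportional to $\lambda\ten\nu|_{K^c}$) together with the fact, recalled in the preliminaries, that $(E,\nB(E))$ is a measurable vector space, so that finite sums are $\nB(E)$-measurable.

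For the distributional part of (2) I would reduce to the real line. For fixed $a\in E'$, linearity of the finite sum gives $\<Y_t(B),a\>=\int_{(0,t]\times B}\<x,a\>\dd N'(s,x)$, an ordinary real-valued Poisson integral whose characteristic function is the classical $\exp\!\big(\int_{(0,t]\times B}(\ee^{\ii\<x,a\>}-1)\dd(\lambda\ten\nu|_{K^c})\big)$, cf.\ \cite[Ch.~19]{sato:99}. Using $\lambda((0,t])=t$ this equals $\exp\!\big(t\int_B(\ee^{\ii\<x,a\>}-1)\dd\nu|_{K^c}(x)\big)$, which is precisely $\wh{\ee(t\nu|_{K^c\cap B})}(a)$. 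Since two finite measures on the Suslin space $E$ with equal Fourier transform coincide, one concludes $\PP'_{Y_t(B)}=\ee(t\nu|_{K^c\cap B})$. The Lévy measure $t\nu|_{K^c\cap B}$ is carried by $K^c$, so the compensating term $\ii\<x,a\>\cf_K(x)$ vanishes $\nu|_{K^c}$-almost everywhere, and reading off the Lévy--Khintchine form gives characteristics $(0,0,t\nu|_{K^c\cap B},K)$.

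The process properties in (2) then follow from the structure of $N'$. Increments over disjoint time intervals are integrals of $N'$ over disjoint subsets of $T\times E$, hence independent, and likewise independent of $\nF_s=\s(Y_r(B):r\le s)$; translation invariance of $\lambda$ yields stationarity of increments; $Y_0(B)=0$ is clear; and the family $(\ee(t\nu|_{K^c\cap B}))_t$ is a weakly continuous convolution semigroup, verifying \ref{ax:wcsg}. The \cadlag property is immediate in this finite-activity situation: for $\om\in\Om'_0$ the trajectory $t\mapsto Y_t(B)(\om)$ is a finite step function, constant between the finitely many atom times and jumping there, hence right-continuous with left limits. For assertion (3) I argue pathwise on $\Om'_0$: since $\lambda$ is non-atomic no two atoms of $N'$ share a time coordinate almost surely, and since $\nu(\{0\})=0$ every atom satisfies $x_j\neq 0$; therefore the jump of $Y=Y(E)$ at a time $s$ equals the spatial part of the atom at $s$ when one exists and is $0$ otherwise. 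Hence the atoms $(s_j,x_j)$ are in bijection with the times $s$ of nonzero jumps, with $x_j=\D Y_{s_j}(\om)$, and counting atoms in a set equals counting jump times $s$ with $(s,\D Y_s(\om))$ in that set and $\D Y_s(\om)\neq 0$, which is the stated identity.

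The one genuinely non-routine point, in contrast to the classical $\RR^d$ treatment of \cite[Ch.~20]{sato:99}, will be the measurability of $Y_t(B)$ as an honest $E$-valued random vector over the possibly uncountable dual $E'$; this is exactly what forces the reduction to a finite sum and the simultaneous use of the measurable--vector--space property of $(E,\nB(E))$ and of the uniqueness of Fourier transforms on Suslin spaces. Once these two structural facts are granted, all remaining steps are the standard Poisson bookkeeping.
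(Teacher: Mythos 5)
Your proof is correct and follows essentially the same route as the paper: finiteness of $Y_t(B)$ via the a.s.\ finitely many atoms of $N'$ on $(0,t]\times E$ (using $\nu(K^c)<\infty$), the identification $\PP'_{Y_t(B)}=\ee(t\nu|_{K^c\cap B})$ via the one-dimensional projections $\<Y_t(B),a\>$, $a\in E'$, combined with uniqueness of Fourier transforms on the Suslin space $E$, and the pathwise atom--jump correspondence for assertion (3). The only step you assert without verification, weak continuity of the convolution semigroup $t\mapsto \ee(t\nu|_{K^c\cap B})$ as $t\arrse 0$, is exactly the point the paper checks explicitly by the elementary total-variation estimate $\bigl|\bigl[\ee(t\nu|_{K^c})-\d_0\bigr](f)\bigr|\le \ee^{-t\nu|_{K^c}(E)}\bigl(\ee^{t\nu|_{K^c}(E)M}-1\bigr)\rightarrow 0$ for $f\in\nC_b(E)$ with $|f|\le M$, a one-line computation you should add to make the argument complete.
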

\begin{proof}
(1) follows from the property that $N'(\cdot,\om)$ is a.s.\ supported on a finite number of points. \\
(2) This essentially follows from \cite[Proposition~19.5]{sato:99}. For convenience, we adapt the situation to ours: The functional $a\in E'$ is measurable. We consider the real-valued random variable
\begin{align*}
Y_t^a(B)(\om):=\<Y_t(B)(\om),a\> = \int_{[0,t]\times B}\<x,a\>\dd N'(s,x)(\om),
\end{align*}
which has Fourier transform
\begin{align*}
\EE \ee^{\ii z Y^a_t(B)}=& \exp\left(t \int_{E} \big(\ee^{\ii z \<x,a\>}-1\big)\dd\nu|_{K^c\cap B}(x)\right) \\
=&\exp\left( \int_{[0,t]\times E} \big(\ee^{\ii z \<x,a\>\cf_B(x)}-1\big)\dd\lambda\ten \nu|_{K^c}(s,x)\right),\quad z\in \RR.
\end{align*}
Since this holds for arbitrary $a\in E'$, the $E$-valued random variable $Y_t(B)$ has distribution $\ee(t\nu|_{K^c\cap B})$ on $E$. Independent increments follow from $N'$ being independently scattered, stationary increments from the structure of the intensity measure. The map $t\mapsto \ee(t\nu|_{K^c\cap B})=\PP_{Y_t(B)}$ is a convolution semigroup. Weak continuity of the semigroup of distributions of $Y_t$ follows from
\begin{align*}
\left|\big [\ee(t\nu|_{K^c})-\d_0\big](f) \right| & \leq \left| \ee^{-t \nu|_{K^c}(E)} \sum_{n=1}^\infty \frac{t^n \nu|_{K^c}(E)^n M^n}{n!}\right|\\
&= \ee^{-t\nu|_{K^c}(E)}\left[ \ee^{t\nu|_{K^c}(E)M} -1 \right] \rightarrow 0\quad \text{for }t\arrse 0,
\end{align*}
where $f\in \nC^b(E)$ and $\mathrm{im}(f)\subseteq [-M,M]$. Weak continuity of $\PP_{Y_t(B)}$ is shown analogously substituting $K^c$ by $K^c\cap B$ in the previous calculation.
 \\
(3) If $s\in T$ such that $N'(\{s\}\times E,\om)=0$, then $\D Y_s(\om)=0$. If there is an $x\in B\setminus \{0\}$ such that $N'(\{(s,x)\},\om)=1$, then $\D Y_s(\om) = x$.
\end{proof}

For $A\in \nB(T\times E)$ satisfying $A\subseteq T\times (K\setminus \e K)$ for some $\e\in (0,1)$ define the Bochner integral
\begin{align}\label{eq:bochint}
\int_A x \dd (\lambda\ten \nu)(t,x).
\end{align}
Indeed, $(\lambda\ten\nu)|_A$ is finite and concentrated on (a subset of) $T\times K$. The map $\psi\colon T\times K\rightarrow E_{K}$, $(t,x)\mapsto x$, satisfies the conditions of Bochner integrability: It takes values in a Banach space $E_K\subseteq E$ and is Bochner measurable (which is equivalent to being measurable in separable Banach spaces); for Bochner integrals in locally convex spaces cf.\ \cite[Definition~5, p.~75]{thomasintegrationlcs}. Therefore, \eqref{eq:bochint} can be considered as a Bochner integral in $E_K$ or in $E$.

The measure $\lambda\ten \nu$ is $\sigma$-finite on $\nB(T\times E)$ using the partition $E=\bigcup_n C_n$ with $C_0=K^c$ and $C_n:=\frac{1}{n}K\setminus \frac{1}{n+1}K$ for $n=1,2,\ldots$. Indeed, $\nu(C_n)<\infty$ by Proposition~\ref{prop:levyEKequivalences}~(ii). By $\sigma$-finiteness of $\lambda\ten \nu$, one can construct a Poisson random measure $N'$ on a probability space $(\Omega',\nF',\PP')$ with intensity measure $\lambda\ten \nu$, cf.\ \cite[Proposition~19.4]{sato:99}. Without loss of generality, $N'$ from above is just the restriction of this new Poisson random measure to $T\times K^c$.

Let $B\in \nB(E)$, $B\subseteq C_n$ and $t\in T$. Analogously as in \eqref{eq:PoissonInt} the L\'evy processes
$$Y_t(B)(\om):=\int_{[0,t]\times B}x\dd N'(s,x)(\om)$$
are finite for almost all $\om\in \Om'$, contained in $E_K$ for $n\neq 0$ and Poisson distributed on $E$ and $E_K$ by Lemma~\ref{lem:poissonmeasurecomparable}.

 \begin{definition}\label{def:comppoisson}
Let $B\in \nB(E)$ with $B\subseteq C_n$ and $t\in T$. The \emph{compensated Poisson integral} is defined by
\begin{align*}
J'([0,t]\times B):= \int_{[0,t]\times B} \hspace{-3mm}x\dd\wt{N'}(t,x) := \int_{[0,t]\times B}\hspace{-3mm} x\dd N'(t,x) - \int_{[0,t]\times B}\hspace{-3mm} x \dd (\lambda\ten \nu)(t,x).
\end{align*}
\end{definition}
\begin{definition}
Let $Z^n\colon T\times \Omega\rightarrow E$, $n=1,2,\ldots$ be \cadlag stochastic processes. They are said to \emph{converge almost surely uniformly on bounded intervals of $T$}, if there exists a set $\Omega_0$ of measure one, such that for all $\om\in\Om_0$, for all seminorms $p$ generating the topology $\t$ on $E$ and all bounded intervals $T_0\subseteq T$ one has that
$$\sup_{t\in T_0}p\left(Z^n_t(\om)-Z^m_t(\om)\right)<\e$$
for every $\e>0$ and $n,m\in \NN$ large enough.
\end{definition}

\begin{theorem}[Compensated Poisson integral]\label{thm:comppoisint}
With the notation from above, for $t\in T$, the quantity
\begin{align}\label{eq:comppoisint}
J'_t:=\int_{(0,t]\times K} x\dd \wt{N'}(s,x):=\sum_{n=1}^\infty  J'([0,t]\times C_n)
\end{align}
is a series of independent random variables in $E_K$ and converges almost surely in $E_K$ and $E$. The convergence is uniform in $t$ on bounded intervals of $T$. Finally, $(J'_t)_{t\in T}$ is a \cadlag L\'evy process in $E$ with characteristics $(0,0,\nu|_{K},K)$.
\end{theorem}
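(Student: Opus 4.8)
The plan is to carry out the whole convergence argument inside the separable Banach space $E_K$ and to deduce the statements in $E$ afterwards from continuity of the embedding $\imath\colon E_K\hookrightarrow E$: every continuous seminorm $p$ on $E$ restricts to a continuous seminorm on $E_K$, hence is dominated by a multiple of $\|\cdot\|_K$, so convergence that is uniform in $t$ in $E_K$ transfers verbatim to $E$. Write $U^{(n)}_t:=J'([0,t]\times C_n)$ and $S^{(N)}_t:=\sum_{n=1}^N U^{(n)}_t$. The structural claims are immediate: for $n\ge 1$ one has $C_n\subseteq K\subseteq E_K$, so $Y_t(C_n)$ is a finite sum of jumps lying in $K$ and the compensating Bochner integral lies in $E_K$; hence each $U^{(n)}_t$ is an $E_K$-valued, centered, integrable random variable, with $\|U^{(n)}_t\|_K$ dominated by the Poisson variable $N'([0,t]\times C_n)$ plus a deterministic drift. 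Independence of $(U^{(n)})_n$ follows from the Poisson structure, since the sets $T\times C_n$ are pairwise disjoint and an independently scattered Poisson random measure takes independent values on disjoint Borel sets.

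Next I would prove, for fixed $t$, almost sure convergence of $S^{(N)}_t$ in $E_K$; this is where local reducibility enters. By Theorem~\ref{thm:restLevy} together with Proposition~\ref{prop:levyEKequivalences}, the restriction $(\nu|_K)\|_{E_K}$ is a L\'evy measure on the separable Banach space $E_K$, so the centered partial-sum laws are uniformly tight and converge weakly on $E_K$ to a representative of $\wt{\ee}(t\,\nu|_K)$. As the $U^{(n)}_t$ are independent, the It\^o--Nisio theorem in $E_K$ upgrades this weak convergence to almost sure convergence, and intersecting the exceptional null sets over a countable dense set $T_0\subseteq T$ leaves a single null set off which $S^{(N)}_t$ converges for every $t\in T_0$.

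The hard part is to promote this to uniform convergence in $t$ without moment assumptions --- in a general separable Banach space a L\'evy measure need not satisfy $\int_K\|x\|_K^2\dd\nu<\infty$, so the $L^2$ Doob inequality is not available. Fixing a bounded interval $[0,t_0]\subseteq T$, the quantity to control is $\sup_{t\le t_0}\|S^{(N)}_t-S^{(M)}_t\|_K$, which by right-continuity and Lemma~\ref{lem:uniformity} equals the supremum over the countable set $T_0$ and is therefore measurable. The idea is to invoke the moment-free L\'evy--Ottaviani maximal inequality twice. For fixed $M<N$ the tail $t\mapsto S^{(N)}_t-S^{(M)}_t$ has independent increments in $t$, so Ottaviani's inequality in the time variable bounds $\PP(\sup_{t\le t_0}\|S^{(N)}_t-S^{(M)}_t\|_K>\e)$ by a constant times $\PP(\|S^{(N)}_{t_0}-S^{(M)}_{t_0}\|_K>\e)$, which tends to $0$ as $M,N\to\infty$ by the previous step applied at $t=t_0$. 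A second application, now in the summation index and with respect to the seminorm $q(f):=\sup_{t\in T_0}\|f(t)\|_K$ on path space, removes the maximum over $N$ and shows that the partial sums are almost surely $q$-Cauchy; this is precisely the step that replaces the subsequence extraction of \cite[Proof of Theorem~2.1]{dettweiler}. Hence $S^{(N)}$ converges almost surely, uniformly on $[0,t_0]$, in $E_K$ and therefore in $E$, and its limit agrees with the pointwise limit $J'_t$.

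Finally I would identify the limit $J'$. Being an almost sure uniform limit of \cadlag processes, $J'$ has \cadlag paths. For $a\in E'$, independence of the $U^{(n)}$ gives $\EE\exp(\ii\<S^{(N)}_t,a\>)=\exp\big(t\int_{\bigcup_{n\le N}C_n}(\ee^{\ii\<x,a\>}-1-\ii\<x,a\>\cf_K(x))\dd\nu(x)\big)$, and letting $N\to\infty$, using $\bigcup_{n\ge1}C_n=K\sm\{0\}$, identifies the characteristic function of $J'_t$ with that of the infinitely divisible law with characteristics $(0,0,\nu|_K,K)$. Independent and stationary increments, $J'_0=0$ and weak continuity of the associated convolution semigroup pass from the approximating processes $S^{(N)}$ to $J'$, so $(J'_t)_{t\in T}$ is the claimed L\'evy process.
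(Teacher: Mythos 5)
Your proposal is correct and follows essentially the same route as the paper: reduction to the separable Banach space $E_K$ via local reducibility and Theorem~\ref{thm:restLevy}, weak convergence of the centered partial sums to $\wt{\ee}\big(t(\nu|_K)\big\|_{E_K}\big)$ from the Banach-space theory of L\'evy measures, an It\^o--Nisio upgrade to a.s.\ convergence for fixed $t$, a maximal-inequality argument for uniformity in $t$, and transfer to $E$ through the continuous embedding $\imath\colon E_K\hookrightarrow E$ with $p\circ\imath\leq c_p\|\cdot\|_K$. The only difference is presentational: where you apply the L\'evy--Ottaviani inequality twice explicitly (in time, then in the summation index on path space), the paper outsources exactly these two steps to Dettweiler's proof of convergence in probability of the supremum and to the Banach-space version of \cite[Lemma~20.4]{sato:99}, whose proofs consist of precisely those inequality arguments.
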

\begin{remark}
We understand the process $J'_t$ in the following sense: $J'_t(\om)$ takes the value of the right-hand side for all $t\in T$, if $\om\in \Om'_2$ such that the series converges uniformly on bounded intervals of $T$ in $\om$; and $J'_t:=0$ for $\om\in \Om'^c_2$.
\end{remark}
\begin{proof} Define $x_n:=-\int_{C_n}x\dd \nu(x)\in E_K\subseteq E$. The summands $J'([0,1]\times C_n)$ have distributions $\ee(\nu|_{C_n})*\d_{x_n}$ on $E$ or, by restriction $\ee(\nu|_{C_n})\|_{E_K}*\d_{x_n}\in\nM^1(E_K)$. Their sums converge weakly to $\wt{\ee}((\nu|_K)\|_{E_K})\in\nM^1(E_K)$, cf.\ proof of  \cite[Theorem~3.4.9]{heyer}. By the same arguments as in~\cite[Proof of Theorem~2.1]{dettweiler}, convergence in distribution of the partial sums of the independent random variables yields a.s.\ convergence in $E_K$ (cf.\ \cite[Theorem~3.1.6]{heyer}). Similarly, it converges for every $t\in T$. $(J'_t)_{t\in T}$ is a L\'evy process in $E_K$ with characteristics $(0,0,(\nu|_K)\|_{E_K},K)$. Indeed, the independent and stationary increments follow from the definition of $N'$. The semigroup of distributions equals $t\mapsto\wt{\ee}(t(\nu|_K)\|_{E_K})$ and is weakly continuous, cf.\ \cite[Theorem~2.3.9]{heyer}. 
\\
By Lemma~\ref{lem:EKprops} we have that all random elements on $E_K$ are also measurable with respect to $\nB(E_K)$. As $K$ is bounded and closed, the injection $\imath\colon E_{K}\hookrightarrow E$ is continuous and thus the series converges a.s.\ in $E$. In order to establish \emph{uniform} convergence on bounded intervals of $T$, we fix $t\in T$, $t>0$ and again use the injection $\imath$. Continuity implies $p(\imath(x))\leq c_p\|x\|_K$ for all $x\in E_K$ and all seminorms $p$ generating $\t$ and suitable constants $c_p>0$. Therefore,
\begin{align*}
\sup_{s\in [0,t]} p\left(\imath\left(J'_s - \sum_{n=1}^N J'([0,s]\times C_n)\right)\right) &\leq c_{p}\sup_{s\in [0,t]}\left\|J'_s- \sum_{n=1}^N J'([0,s]\times C_n)\right\|_K
\end{align*}
where the right hand side converges in probability by \cite[Proof of Thm.~2.1]{dettweiler} and due to the fact that $J'$ is a L\'evy process in $E_K$.

Applying \cite[Lemma~20.4]{sato:99} (which can be proven in exactly the same way for Banach spaces) yields a.s.\ uniform convergence on bounded intervals of the sequence of processes in $E_K$, a \cadlag limiting process $J'$ in $E_K$ as $\nD([0,t];E_K)$ is closed under uniform convergence. The inequality yields uniform convergence in $E$ and \cadlag functions in $E$ as well injecting the $\nD([0,t];E_K)$ functions into $\nD([0,t];E)$ by virtue of $\xi\mapsto \imath\circ \xi$. It remains to state that $J'$ is a L\'evy process in $E$. Continuity of $\imath$ implies weak continuity of the semigroup $t\mapsto \wt{\ee}(t\nu|_K)$, the null extension of the familiy of distributions on $E_K$ and the proof is complete.
\end{proof}

As mentioned above, the generalised Poisson exponential is unique up to a convolution. From
now on we will use a certain representative given by the construction above in order
to avoid ambiguities: We define $\wt{\ee}(\nu|_K)$ as the weak limit of $\wt{\ee}(\nu|_{C_n} ) * \delta_{x_n}$ with $x_n$ from the previous proof.

Given an infinitely divisible distribution $\r$ with characteristics $(\g,Q,0,K)$ on a locally convex Suslin space, where $Q\colon E'\arre E$ is a covariance operator associated to a Gaussian measure $\r$, one can define a Wiener process $W'_t$ on a probability space $(\Om',\nF',\PP')$ such that $W'_t\sim \r^{*t}$, cf.\ \cite[Proposition~5]{feyel_delapradelle_95}. The stochastic process $(W'_t)_{t}$ is continuous, has values in $E$ and
$$\EE \<W'_t-\g t,a\>\<W'_s-\g s,b\>=\min \{s,t\} \<Qa,b\>.$$
For details, we confer to Theorem~\ref{thm:BMinE}.
\begin{proposition}\label{prop:constructedprocess}
Let $X'_t=J'_t+L'_t+W'_t$, $t\in T$, be defined on a complete probability space $(\Om',\nF',\PP')$ where $L'_t:=Y_t(K^c)$ is defined as in \eqref{eq:PoissonInt}, and let the summands be constructed in a way such that they are independent. Then,
\begin{enumerate}
\item $(X'_t)_{t\in T}$ is a \cadlag L\'evy process and
\item $X'_1$ has characteristics $(\g,Q,\nu,K)$.
\item The random measure $N'$ satisfies $N'(B)=\#\{s: (s,\D X'_s)\in B\setminus \{0\}\}$ for $B\in\nB(T\times E)$.
\end{enumerate}
\end{proposition}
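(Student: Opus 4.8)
The plan is to verify the defining properties of a L\'evy process one summand at a time, using throughout the mutual independence of $J'$, $L'$, $W'$ together with the facts already at our disposal: by Theorem~\ref{thm:comppoisint} the process $J'$ is a \cadlag L\'evy process with characteristics $(0,0,\nu|_K,K)$, by the preceding proposition $L'=Y(K^c)$ is a \cadlag L\'evy process with $\PP'_{L'_t}=\ee(t\nu|_{K^c})$ and characteristics $(0,0,\nu|_{K^c},K)$, and $W'$ is a continuous Wiener process with mean $\g$ and covariance $Q$, i.e.\ characteristics $(\g,Q,0,K)$. For the \cadlag property I would simply note that a finite sum of \cadlag maps into a topological vector space is again \cadlag: right-continuity is preserved by continuity of addition, and the left limits exist and add (here $W'$ is even continuous). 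Hence $X'$ has \cadlag paths on the intersection of the three full-measure sets.

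For the L\'evy axioms \ref{ax:wcsg} aside, I would treat (L1)--(L3) via independence. Writing $\mathcal{G}^J_s:=\s(J'_r:r\le s)$ and likewise for $L',W'$, these three $\s$-algebras are independent, and within each process the increment is independent of its own past. Thus the six blocks $\s(J'_t-J'_s),\mathcal{G}^J_s,\dots$ are mutually independent, so the triple $(J'_t-J'_s,L'_t-L'_s,W'_t-W'_s)$ is independent of $\mathcal{G}^J_s\vee\mathcal{G}^L_s\vee\mathcal{G}^W_s\supseteq \nF_s^{X'}$; since $X'_t-X'_s$ is a function of that triple, (L1) follows. For (L2) the law of $X'_{t+s}-X'_t$ is, by independence, the convolution of the three increment laws, which by stationarity of each summand equals the convolution of the laws of $J'_s,L'_s,W'_s$, i.e.\ the law of $X'_s$. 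Property (L3) is immediate from $J'_0=L'_0=W'_0=0$.

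For (L4) and part (2) together I would compute $\wh{\mu_1}$, $\mu_1:=\PP'_{X'_1}$, which by independence factorises as the product of the three Fourier transforms:
\begin{align*}
\wh{\mu_1}(a)=\exp\!\Big(\ii\<\g,a\>-\tfrac12\<Qa,a\>+\!\int_K\!\big(\ee^{\ii\<x,a\>}\!-1-\ii\<x,a\>\big)\dd\nu(x)+\!\int_{K^c}\!\big(\ee^{\ii\<x,a\>}\!-1\big)\dd\nu(x)\Big).
\end{align*}
Recombining the two integrals into $\int_E(\ee^{\ii\<x,a\>}-1-\ii\<x,a\>\cf_K(x))\dd\nu(x)$ exhibits exactly the L\'evy--Khintchine form of Theorem~\ref{thm:levy_khintchine} with characteristics $(\g,Q,\nu,K)$, which is part (2). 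Since the characteristic exponent of $\mu_1$ is the sum of the three exponents, each generating the convolution semigroup of its summand, Proposition~\ref{prop:semigroup} yields $\mu_t=\mu_1^{*t}$ and the weak continuity of $(\mu_t)_{t\in T}$, establishing (L4) and hence (1).

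For part (3) the starting observation is $\D W'_s=0$ by continuity, so $\D X'_s=\D J'_s+\D L'_s$. I would show that the jump times of $L'$ are exactly the times $s$ carrying an atom $(s,x)$ of $N'$ with $x\in K^c$, with $\D L'_s=x$, and correspondingly for $J'$ the atoms lying in $T\times K$. \emph{The main obstacle is the small-jump part:} because $\nu|_K$ may have infinite mass, $J'$ has infinitely many, accumulating jumps, so I must argue through the partition $\{C_n\}$ that on each $T\times C_n$ the jumps of the finite-activity piece $J'([0,\cdot]\times C_n)$ are precisely the atoms of $N'$ there, while the subtracted compensator $\int_{[0,\cdot]\times C_n}x\dd(\lambda\ten\nu)$ is continuous in $t$ and contributes no jumps. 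Granting this, together with the a.s.\ fact that no time $s$ carries two atoms of $N'$ (the time marginal $\lambda$ being non-atomic), the jump times of $J'$ and $L'$ are disjoint and at each one $\D X'_s$ equals the unique atom location $x$. Since $\nu(\{0\})=0$ forces $N'(T\times\{0\})=0$ a.s., counting the $s$ with $(s,\D X'_s)\in B\setminus\{0\}$ returns exactly $N'(B)$, which is (3).
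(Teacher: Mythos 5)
Your proposal is correct and takes essentially the same route as the paper: part (2) by factorising the Fourier transform (which the paper phrases as $\wt{\ee}(\nu|_K)*\ee(\nu|_{K^c})=\wt{\ee}(\nu)$), part (3) by reducing to sets bounded away from $0$ via the partition $B\setminus\{0\}=\bigcup_{n}(T\times C_n)\cap B$, while your detailed verification of (1) merely expands what the paper dismisses as ``clear''. The single step you flag with ``granting this'' --- identifying $\D J'_s$ with the atom of $N'$ at time $s$ --- is closed exactly by the a.s.\ uniform-in-$t$ convergence of the series in Theorem~\ref{thm:comppoisint}, which transfers the jump structure from the finite partial sums (whose compensators are linear, hence continuous, in $t$) to the limit process.
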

\begin{proof}
(1) is clear. (2) One obtains the characteristics by convolution of the ingredients and noting that $\wt{\ee}(\nu_1)*\ee(\nu_2)= \wt{\ee}(\nu_1+\nu_2)$ (as can be seen using the Fourier transform) for a L\'evy measure $\nu_1$ and $\nu_2\in\nM^b(E)$. In our situation, $\nu_1=\nu|_{K}$, $\nu_2=\nu|_{K^c}$.\\
(3) This property holds for each $\e>0$ if $B\cap (T\times \e K)=\emptyset$. Observing
$$B\setminus \{0\}=\bigcup_{n=0}^\infty (T\times C_n)\cap B$$
with $C_n$, $n\in \NN^*$, as defined above (and $C_0:=E\setminus K$), one has
$$ N'(B)=\sum_{n=0}^\infty N'((T\times C_n)\cap B),\quad \as$$
and the assertion follows.
\end{proof}

\subsection{Definitions on the original space}\label{subsec:definitions}
This section is following the ideas of Sato in \cite[Section~20]{sato:99} for the proof of the L\'evy-It\^o decomposition of finite dimensional L\'evy processes. Extra considerations  for measurability issues have been carried out in lemmas~\ref{lem:leftlim} and~\ref{lem:jointlymeasurable} above.
\begin{definition}\label{defin:randommeasures}
Let $B\in\nB(T\times E)$ and $X=(X_t)_{t\in T}$ the (original) L\'evy process given on $(\Om,\nF,\PP)$. Let $\xi\in \nD(T;E)$ be a \cadlag function and $\D x_t(\xi):= x_t(\xi) -x_{t-}(\xi)$. For $\omega\in\Om$ resp.\ $\xi\in\nD(T;E)$ define
\begin{align*}
N(B,\omega)&:=\#\big\{s: (s,\Delta X_s(\omega))\in B\setminus \{0\}\big\}\quad\text{resp.}\\
n(B,\xi)&:=\#\big\{s: (s,\D x_t(\xi))\in B\setminus \{0\}\big\}
\end{align*}
and for all sets $B$ with $\lambda\ten \nu(B)<\infty$ set
\begin{align}\label{eq:cprm}
\wt{N}(B,\omega)&:=N(B,\omega)-(\lambda\ten\nu)(B)\\
\wt{n}(B,\xi)&:=n(B,\xi)-(\lambda\ten\nu)(B).\nonumber
\end{align}
$N$ is called the \emph{Poisson random measure associated to $X$} and $\wt{N}$ the \emph{compensated Poisson random measure corresponding to $X$}.
\end{definition}

Let $(\Om,\nF,\PP)$ be the original probability space and $(\Om',\nF',\PP')$ the probability space of the constructed process of the previous section. Both processes $X$ and $X'$ consist of \cadlag paths only by construction. We define the following maps to the space of \cadlag functions:
\begin{align*}
\psi\colon & \Om\arre \nD(T;E), & \psi (\om):= X_\cdot(\om),\\
\psi'\colon & \Om'\arre \nD(T;E), & \psi' (\om) := X'_\cdot(\om).
\end{align*}
One obtains $\PP\circ \psi^{-1}=\PP'\circ (\psi')^{-1}$, where this measure, call it $\PP^\nD$, is defined on $\nF_\nD$.  This is due to equality of distributions of $X'_t$ and $X_t$ on $E$ and therefore of $X'$ and $X$ on $\nD(T;E)$. The following lemma shows that $\lambda\ten \nu$ is the compensator of $N$ and $n$.
\begin{lemma}\label{lem:PoisRM}
Let $B\in \nB(T\times E)$ with $\lambda\ten \nu(B)<\infty$. Then,
$$\PP_{N(B)} = \PP'_{N'(B)}=\PP^{\nD}_{n(B)}.$$
In particular, $N$ and $n$ are a Poisson random measures with intensity measure $\lambda\ten \nu$.
\end{lemma}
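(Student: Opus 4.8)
The plan is to recognise that both $N(B)$ and $N'(B)$ are obtained by applying one and the same deterministic path functional --- the jump-counting map $n(B,\cdot)$ on $\nD(T;E)$ --- to the respective path maps $\psi$ and $\psi'$, and then to transport distributions through the common pushforward law $\PP^\nD$. First I would observe directly from Definition~\ref{defin:randommeasures} that $\D x_s(\psi(\om)) = \D X_s(\om)$, whence
\[
N(B,\om) = n\bigl(B, \psi(\om)\bigr), \qquad \om\in\Om,
\]
while Proposition~\ref{prop:constructedprocess}(3) yields in exactly the same way $N'(B,\om) = n\bigl(B,\psi'(\om)\bigr)$ for $\om\in\Om'$. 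Thus everything reduces to the single functional $n(B,\cdot)$.

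The one genuinely non-routine step --- and the reason the preparatory results of Section~\ref{sec:cadlagfunctionsinsuslinspaces} are needed --- is to show that $n(B,\cdot)\colon \nD(T;E)\arre \NN\cup\{\infty\}$ is $\nF_\nD$-measurable. Here I would use that a \cadlag path has at most countably many jumps (Lemma~\ref{lem:countablejumps}), enumerable by the measurable jump times $t_{n,j}$ (Lemma~\ref{lem:jumpsmeasurable}), to write
\[
n(B,\xi) = \sum_{n,j} \cf_{B\setminus\{0\}}\bigl(t_{n,j}(\xi),\, \D x_{t_{n,j}(\xi)}(\xi)\bigr).
\]
Each summand is measurable: $t_{n,j}$ is measurable, the jump size $\D x = x - \conj{x}$ evaluated at a measurable time is measurable by the joint measurability of $(\xi,t)\mapsto x_t(\xi)$ and $(\xi,t)\mapsto x_{t-}(\xi)$ (Lemma~\ref{lem:jointlymeasurable}, Lemma~\ref{lem:leftlim}), and composition with $\xi\mapsto(\xi,t_{n,j}(\xi))$ preserves measurability. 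Hence $n(B,\cdot)$ is a random variable on $(\nD(T;E),\nF_\nD)$.

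With measurability in hand the distributional identity is immediate from $\PP\circ\psi^{-1} = \PP'\circ(\psi')^{-1} = \PP^\nD$ on $\nF_\nD$:
\[
\PP_{N(B)} = \PP\circ\bigl(n(B)\circ\psi\bigr)^{-1} = \PP^\nD\circ n(B)^{-1} = \PP^\nD_{n(B)},
\]
and the identical computation on $\Om'$ gives $\PP'_{N'(B)} = \PP^\nD_{n(B)}$, yielding the three-fold equality. For the ``in particular'' claim I would note that $N'$ is a Poisson random measure with intensity $\lambda\ten\nu$ by construction, so $\PP'_{N'(B)}$ is Poisson with parameter $\lambda\ten\nu(B)$; by the displayed identity the same holds for $N(B)$ and $n(B)$.

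To upgrade this from marginal laws to the full Poisson-random-measure property I would run the same functional-transfer argument for finite tuples $\bigl(n(B_1),\dots,n(B_k)\bigr)$ with pairwise disjoint $B_i$, so that the independence of $N'(B_1),\dots,N'(B_k)$ carries over to $N$ and to $n$; $\s$-additivity in $B$ for fixed $\om$ holds by definition as a counting measure. The main obstacle is therefore entirely the measurability of $n(B,\cdot)$ in the Suslin setting, which is precisely what the lemmas of Section~\ref{sec:cadlagfunctionsinsuslinspaces} were designed to supply; once that is granted, the proof is just the observation that $N$, $N'$ and $n$ are the same functional read off against the same path law.
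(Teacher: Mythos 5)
Your proposal is correct and follows essentially the same route as the paper: the identities $N(B,\om)=n(B,\psi(\om))$, $N'(B,\om)=n(B,\psi'(\om))$, the reduction of everything to $\nF_\nD$-measurability of $n(B,\cdot)$ via the measurable jump times $t_{m,j}$ (Lemma~\ref{lem:jumpsmeasurable}) together with Lemmas~\ref{lem:leftlim} and~\ref{lem:jointlymeasurable}, and the representation of $n(B,\xi)$ as a series of indicators over the jumps, followed by transport through $\PP\circ\psi^{-1}=\PP'\circ(\psi')^{-1}=\PP^\nD$ --- this is precisely the paper's argument (following Sato), with your explicit finite-tuple step for the full Poisson-random-measure property being a slightly more detailed spelling-out of the same transfer.
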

\begin{proof}
We follow the proof of Sato, cf.\ \cite[p.~132]{sato:99}. We have that $n(B,\psi(\om))=N(B,\om)$ and $n(B,\psi'(\om))=N'(B,\om)$ and furthermore they are equal in law, provided that they are $\nF_\nD$-measurable. But this follows from lemmas~\ref{lem:jumpsmeasurable} and \ref{lem:leftlim} and the fact that  $x_{t_{n,j}(\xi)}(\xi)$ and $x_{t_{n,j}(\xi)-}(\xi)$ are $\nF_\nD$-measurable due to Lemma~\ref{lem:jointlymeasurable}, thus
$$G(m,j):=\big\{\xi\in \nD(T;E): t_{m,j}<\infty \text{ and } \D x_{t_{m,j}(\xi)}(\xi)\in B\big\}\in\nF_\nD.$$
Noting that $n(B,\xi)$ can be written as the $m,j$-series over indicator functions of $G(m,j)$ yields the assertion.
\end{proof}

For a Borel set $A\in \nB(E)$ let $\Om_0$ be the intersection of all subsets $\Om_0^m$ of $\Om$ of full measure such that $N([0,m]\times A,\om)<\infty$, $m\in \NN$, if $T=[0,\infty)$. In the case $T=[0,t_{\max}]$, the set $\Om_0$ consists of all $\om\in \Om$ such that $N([0,t_{\max}]\times A,\om)<\infty$. For $\om\in \Om_0$ define
\begin{align}\label{eq:XtA}
X_t(A)(\om):=\sum_{0\leq s\leq t} \D X_s(\om) \cf_{A\setminus \{0\}}(\D X_s(\om))
\end{align}
and if $\om\in \Om_0^c$ we set the trajectory $X_\cdot(A)(\om)$ to zero. Furthermore, carrying out the same construction with $n$ instead of $N$, one can define
\begin{align*}
\quad x_t(A)(\xi):=\sum_{0\leq s\leq t}\D x_s(\xi)\cf_{A\setminus \{0\}}(\D x_s(\xi)),
\end{align*}
for $\xi\in \nD_0$ of full measure $\PP^\nD$ and also $X'_t(A)(\om)$ on a set $\Om'_0\in \nF'$ with $\PP'(\Om'_0)=1$ setting the whole trajectories to zero on the complements $\nD_0^c$ and $\Om'^c_0$. \medskip\\
Then, for all $\om\in \Om$ and $\om'\in \Om'$, it holds that
\begin{align*}
X_t(A)(\om)=x_t(A)(\psi(\om))\quad &\text{ and }\quad X'_t(A)(\om')=x_t(A)(\psi'(\om')).
\end{align*}
Letting
\begin{align}\label{eq:Ltdef}
L_t:=X_t(K^c)\quad \text{and}\quad l_t:=x_t(K^c)
\end{align}
we have
\begin{align}\label{eq:Ltcorrespondence}
L_t(\om)=l_t(\psi(\om))\quad &\text{ and }\quad L'_t(\om)=l_t(\psi'(\om))\quad \text{for all }\om\in\Om,\,\om'\in\Om'.
\end{align}

\begin{lemma}\label{lem:Ltrep}
Let $L_t$ be defined as in \eqref{eq:Ltdef} and the random measures $N$ and $n$ be defined as in Definition~\ref{defin:randommeasures}. Then, for almost all $\omega\in \Omega$ and $\xi\in\nD(T;E)$ one has
$$ L_t(\om)=\int_{(0,t]\times K^c}x\dd N(s,x)(\om)\quad\text{and}\quad l_t(\xi)=\int_{(0,t]\times K^c}x\dd n(s,x)(\xi)$$
and $L_t$ and $l_t$ are \cadlag L\'evy processes with characteristics $(0,0,\nu|_{K^c},K)$.
\end{lemma}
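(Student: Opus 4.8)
The plan is to settle the integral representation almost tautologically, using that $N$ is by Definition~\ref{defin:randommeasures} nothing but the counting measure of the jumps of $X$, and then to transfer the L\'evy property together with the characteristics from the already understood constructed process $L'_t=Y_t(K^c)$ by means of the common factorisation of $L$, $L'$ and $l$ through the jump-sum map on the path space.

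First I would fix the exceptional set. Since $K$ is $\nu$-reducing, $\nu(K^c)<\infty$, hence $(\lambda\ten\nu)\big((0,t]\times K^c\big)=t\,\nu(K^c)<\infty$ for every $t$. By Lemma~\ref{lem:PoisRM} the random measure $N$ (and likewise $n$) is a Poisson random measure with intensity $\lambda\ten\nu$, so $N\big([0,m]\times K^c\big)$ is a.s.\ finite for every $m$, and the set $\Om_0$ introduced before \eqref{eq:XtA} (taken with $A=K^c$) has full measure. On $\Om_0$ the restriction of $N(\cdot,\om)$ to $(0,t]\times K^c$ is a finite sum of unit masses located precisely at the points $(s,\D X_s(\om))$ with $0<s\le t$ and $\D X_s(\om)\in K^c\sm\{0\}$, directly from Definition~\ref{defin:randommeasures}. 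Therefore the Poisson integral reduces to a finite sum,
$$\int_{(0,t]\times K^c} x\dd N(s,x)(\om)=\sum_{0<s\le t,\ \D X_s(\om)\in K^c}\D X_s(\om),$$
and the right-hand side is exactly $X_t(K^c)(\om)=L_t(\om)$ in the sense of \eqref{eq:XtA} and \eqref{eq:Ltdef} (the value $s=0$ contributes nothing, as $\D X_0=0$ and $0\notin K^c$). Running the identical argument on the canonical path space with $n$ in place of $N$ yields $l_t(\xi)=\int_{(0,t]\times K^c}x\dd n(s,x)(\xi)$ for $\PP^\nD$-almost every $\xi$.

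For the process-level statement I would exploit the factorisations $L_t(\om)=l_t(\psi(\om))$ and $L'_t(\om')=l_t(\psi'(\om'))$ from \eqref{eq:Ltcorrespondence} together with $\PP\circ\psi^{-1}=\PP'\circ(\psi')^{-1}=\PP^\nD$. The map $l_t$ is $\nF_\nD$-measurable by lemmas~\ref{lem:jumpsmeasurable}, \ref{lem:leftlim} and \ref{lem:jointlymeasurable}, so the law of $(L_t)_{t\in T}$ under $\PP$ equals the pushforward of $\PP^\nD$ under $\xi\mapsto l_\cdot(\xi)$, and the very same is true for $(L'_t)_{t\in T}$ under $\PP'$ and for $(l_t)_{t\in T}$ under $\PP^\nD$; hence all three share one law on $\nD(T;E)$. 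By the earlier computation that $\PP'_{Y_t(B)}=\ee(t\,\nu|_{K^c\cap B})$, applied with $B=K^c$, the process $L'_t=Y_t(K^c)$ is a \cadlag L\'evy process with $\PP'_{L'_t}=\ee(t\,\nu|_{K^c})$, i.e.\ with characteristics $(0,0,\nu|_{K^c},K)$. Since independent and stationary increments, weak continuity of the marginal distributions and the \cadlag property are all properties of the law on $\nD(T;E)$, they transfer verbatim to $(L_t)$ and to $(l_t)$, which are thus \cadlag L\'evy processes with the same characteristics.

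I do not expect a deep obstacle here; the work is bookkeeping. The point requiring care is the reduction of the Poisson integral over $K^c$ to a genuine finite jump sum, which rests on $\nu(K^c)<\infty$ and on Lemma~\ref{lem:PoisRM} identifying $N$ as a Poisson random measure, and the legitimate use of the measurability lemmas so that the identity $\PP\circ\psi^{-1}=\PP^\nD$ really transports the process-level claims. The step most worth spelling out is the transfer of the characteristics, since it draws on the prior determination of the law of $Y_t(K^c)$ rather than on any fresh argument.
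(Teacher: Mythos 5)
Your proposal is correct and follows essentially the same route as the paper: use $\nu(K^c)<\infty$ and Lemma~\ref{lem:PoisRM} to reduce the Poisson integral over $(0,t]\times K^c$ to the finite jump sum defining $L_t$ (resp.\ $l_t$), then transfer the L\'evy property and the characteristics $(0,0,\nu|_{K^c},K)$ from the constructed process $L'_t=Y_t(K^c)$ via the identities \eqref{eq:Ltcorrespondence} and $\PP\circ\psi^{-1}=\PP'\circ(\psi')^{-1}=\PP^\nD$. One cosmetic point: the \cadlag property is not really inherited from the law but holds pathwise by construction, since on bounded intervals $L_t(\om)$ is a finite sum of jumps and hence a step function --- which is exactly how the paper phrases it.
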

\begin{proof}
From $\lambda\ten \nu([0,t]\times K^c)<\infty $ we deduce $N([0,t]\times K^c,\om)<\infty$ for $\om\in\Om_1^t$ for some $\Om_1^t\in\nF$ with $\PP(\Om_1)=1$ by Lemma~\ref{lem:PoisRM}. For such $\omega\in \Om_1^t$, there are $s_1(\om),\ldots,s_{m(\om)}(\om)\in [0,t]$ with $N(\{s_k(\om)\}\times K^c,\om)=1$ and for all other $s\in [0,t]$ one has that $N(\{s\}\times K^c,\om)=0$. Therefore one obtains
$$\int_{[0,t]\times K^c} x\dd N (s, x)(\om) = \sum_{k=1}^{m(\om)} \D X_{s_k(\om)}(\om) =  \sum_{s\in [0,t]}\D X_{s}(\om) \cf_{K^c}(\D X_s(\om)).$$
If
$$\om\in\Om_1 := \bigcap_{m=1}^\infty \Om_1^m,$$
the above expression exists for every $t\in T=[0,\infty)$. If $T=[0,t_{\max}]$, $\Om_1:=\Om_1^{t_{\max}}$.
By \eqref{eq:Ltcorrespondence} the same follows for $l_t$ taking $\xi \in \nD_1$ with $\PP^{\nD}$-measure one.

The distributions of $L_t$, $l_t$ and $L_t'$ are the same by \eqref{eq:Ltcorrespondence}. So the weakly continuous semigroup of distributions of $L'$ carries over to $L$ and $l$. Therefore, the finite-dimensional distributions on the path space of $(L_t)_{t\in T}$, $(l_t)_{t\in T}$ and $(L'_t)_{t\in T}$ coincide which implies that the processes $L$ and $l$ have independent and stationary increments. Furthermore, $L_t$ is \cadlag by construction.
\end{proof}

Also the compensated integral with respect to $N$ resp.\ $n$ can be constructed analogously as above by setting
\begin{align*}
J([0,t]\times C_m)&:=\int_{(0,t]\times C_m} x \dd N(s,x) - \int_{(0,t]\times C_m} x\dd (\lambda\ten  \nu)(s,x)\quad \text{ resp.}\\
j([0,t]\times C_m)&:=\int_{(0,t]\times C_m} x \dd n(s,x) - \int_{(0,t]\times C_m} x\dd (\lambda\ten  \nu)(s,x)
\end{align*}
for $m\in\NN$. Again,
\begin{align*}
J([0,t]\times C_m)(\om) &= j([0,t]\times C_m)(\psi(\om))\quad \text{and}\\
J'([0,t]\times C_m)(\om')&=j([0,t]\times C_m)(\psi'(\om'))
\end{align*}
for all $\om\in\Om$ resp.\ $\om'\in\Om'$.
\begin{proposition}\label{prop:orginialprocesses}
Defining
\begin{align}\label{eq:Jtdef}
J_t&: = \int_{(0,t]\times K} x \dd \wt{N}(s,x):=\sum_{m=1}^\infty  J([0,t]\times C_m)\quad\text{ resp.}\\
j_t&:=  \int_{(0,t]\times K} x \dd \wt{n}(s,x):=\sum_{m=1}^\infty  j([0,t]\times C_m)\label{eq:jtdef}
\end{align}
one has the following:
\begin{enumerate}
\item The series \eqref{eq:Jtdef} resp.\ \eqref{eq:jtdef} converge almost surely in $E_K$ (thus in $E$) uniformly in $t$ on bounded intervals.
\item $J_t(\om)=j_t(\psi(\om))$ and $J'_t(\om')=j_t(\psi'(\om'))$ for almost all $\om\in\Om$ resp.\ $\om\in\Om'$.
\item $J_t$ and $j_t$ are \cadlag L\'evy process with characteristics $(0,0,\nu|_{K},K)$.
\end{enumerate}
\end{proposition}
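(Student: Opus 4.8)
The plan is to treat $J_t$, $j_t$ and $J'_t$ as one and the same measurable functional of the sample path, and to transport the a.s.\ uniform convergence already established for the constructed process in Theorem~\ref{thm:comppoisint} first to the canonical path space $\nD(T;E)$ and then back to the original space, exploiting that $\psi$ and $\psi'$ both push forward to the common law $\PP^\nD$ on $\nF_\nD$. First I would record that each summand $j([0,t]\times C_m)$ is an $\nF_\nD$-measurable, $E_K$-valued functional of $\xi\in\nD(T;E)$: by the jump-numbering of Section~\ref{sec:cadlagfunctionsinsuslinspaces} the jump times $t_{n,j}$ are measurable (Lemma~\ref{lem:jumpsmeasurable}), the associated jump values $\D x_{t_{n,j}(\xi)}(\xi)$ are measurable by Lemmas~\ref{lem:leftlim} and~\ref{lem:jointlymeasurable}, and $j([0,t]\times C_m)$ is the $\nu$-compensated finite sum of those jumps that land in $C_m$ before time $t$.

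Next I would introduce the good set
\[
\nD_2:=\Big\{\xi\in\nD(T;E):\ \big(\textstyle\sum_{m=1}^N j([0,s]\times C_m)(\xi)\big)_{N\in\NN}\ \text{is $\|\cdot\|_K$-uniformly Cauchy for }s\text{ on bounded intervals}\Big\}
\]
and argue that it is $\nF_\nD$-measurable. The crucial point is that, although $E$ carries an uncountable neighbourhood base, the relevant convergence is governed by the \emph{single} norm $\|\cdot\|_K$ of the separable Banach space $E_K$; since each partial sum $s\mapsto\sum_{m=1}^N j([0,s]\times C_m)(\xi)$ is \cadlag in $E_K$, the supremum over $s$ in a bounded interval reduces to one over rational times, so the uniform Cauchy condition becomes a countable combination of the measurable functionals above. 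I expect this measurability step to be the main obstacle, and it is precisely what Section~\ref{sec:cadlagfunctionsinsuslinspaces} was built to overcome.

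With $\nD_2$ in hand the transfer is routine. The path-space identities $J([0,t]\times C_m)(\om)=j([0,t]\times C_m)(\psi(\om))$ and $J'([0,t]\times C_m)(\om')=j([0,t]\times C_m)(\psi'(\om'))$ show that $(\psi')^{-1}(\nD_2)$ contains the full-$\PP'$-measure set on which the series for $J'$ converges uniformly (Theorem~\ref{thm:comppoisint}); hence $\PP^\nD(\nD_2)=\PP'\big((\psi')^{-1}(\nD_2)\big)=1$, and since $\PP\circ\psi^{-1}=\PP^\nD$ also $\PP\big(\psi^{-1}(\nD_2)\big)=1$. This yields assertion~(1) for $j$ (under $\PP^\nD$) and for $J$ (under $\PP$). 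Summing the term-wise identities on these full-measure sets gives $J_t(\om)=j_t(\psi(\om))$ and $J'_t(\om')=j_t(\psi'(\om'))$, which is assertion~(2).

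Finally, for~(3) I would observe that the finite-dimensional distributions of $(J_t)$, $(j_t)$ and $(J'_t)$ all coincide, being the image under the common functional $j$ of a path with law $\PP^\nD$. Theorem~\ref{thm:comppoisint} already identifies $(J'_t)$ as a \cadlag L\'evy process with characteristics $(0,0,\nu|_K,K)$, so equality of finite-dimensional distributions transfers the independent and stationary increments together with the weakly continuous convolution semigroup, hence the characteristics, to $J$ and $j$. The \cadlag property carries over because on $\nD_2$ each partial sum is \cadlag in $E_K$, the space $\nD([0,t];E_K)$ is closed under uniform convergence, and composition with the continuous injection $\imath\colon E_K\hookrightarrow E$ produces \cadlag paths in $E$.
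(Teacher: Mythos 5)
Your proposal is correct and follows essentially the same route as the paper: define the uniform-Cauchy set in the single norm $\|\cdot\|_K$ on each of $\Om'$, $\nD(T;E)$ and $\Om$, transfer full measure through $\psi$, $\psi'$ and the common law $\PP^\nD$, deduce (2) from the term-wise identities plus uniform convergence, and obtain (3) from equality of finite-dimensional distributions with $J'$ together with Theorem~\ref{thm:comppoisint}. Your only addition is to spell out the $\nF_\nD$-measurability of the Cauchy set via reduction to rational times, a point the paper compresses into the remark that the event ``only depends on the distribution of the processes''.
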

\begin{proof} Analogously to Lemma~\ref{lem:Ltrep}, one obtains that the processes
$$J_t^m:=\sum_{k=1}^m J([0,t]\times C_k),\; j_t^m:=\sum_{k=1}^m j([0,t]\times C_k),\; \text{and}\; (J'_t)^m:=\sum_{k=1}^m J'([0,t]\times C_k)$$
are equal in distribution, where it has been proved above that the distributions of $(J'_t)^m$, $m=1,2,\ldots$ converge to an infinitely divisible measure with characteristics $(0,0,t\nu|_{K},K)$ for $m\rightarrow \infty$ and $t\in T$. \medskip \\
(1) Let $\Om'_2$ be the set of full $\PP'$-measure such that the series \eqref{eq:comppoisint} for $J'_t$ converges in Theorem~\ref{thm:comppoisint}. It equals the intersection over all sets $\Om'_2(N)$ of $\PP'$-measure one ($N=1,2,\ldots$) where
$$\Om'_2(N):=\Big\{\om\in\Om': \lim_{m\rightarrow \infty} \sup_{l,k\geq m}\sup_{t\in [0,N]}\big\| (J'_t)^l(\om)-(J'_t)^k(\om)\big\|_K=0\Big\}.$$
Let $\nD_2$ resp.\ $\Om_2$ be the intersection of sets $\nD_2(N)$ resp.\ $\Om_2(N)$ from above with $\Om'$ replaced by $\nD(T;E)$ resp.\ $\Om$ and $J'$ by $j$ and $J$, respectively. Then,
$$1=\PP'(\Om'_2) = \PP^{\nD}(\nD_2) = \PP(\Om_2)$$
as it only depends on the distribution of the processes. This yields a.s.\ uniform convergence on bounded intervals of $j_t$ and $J_t$ in $E_K$ and consequently in $E$. \smallskip\\
(2) For all $\om\in\Om'_2$ and all $n\in\NN$ we have $(J'_t)^n(\om)=j_t^n(\psi(\om))$. As the left-hand side and therefore the right-hand side converges uniformly we obtain that $\psi(\om)\in\nD_2$ and $j^n_t(\psi(\om))\rightarrow j_t(\psi(\om))$ uniformly in $t$ on bounded intervals by definition of $\nD_2$ and therefore $J'(\om)=j(\psi'(\om))$ for $\om\in\Om'_2$. The same arguments hold for $J$ on $\Om$. \smallskip\\
(3) $J$ is \cadlag by uniform convergence and $J$ and $j$ are L\'evy processes by equality of their finite dimensional distributions with those of $J'$.
\end{proof}
Finally, we set $Y_t(\om):=X_t(\om)-L_t(\om)-J_t(\om)$ and $y_t(\xi):=x_t(\xi)-l_t(\xi)-j_t(\xi)$ and $Y'_t(\om):=X'_t(\om)-L'_t(\om)-J'_t(\om) = W'_t(\om)+\g t$.
From the equality of finite-dimensional distributions of $(J,L,Y)$, $(j,l,y)$ and $(J',L',Y')$ we obtain:
\begin{proposition}\label{prop:independence}
The three processes $(J_t,L_t,Y_t)_{t\in T}$ are independent.
\end{proposition}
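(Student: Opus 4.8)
The plan is to transfer the independence of the constructed triple $(J'_t,L'_t,Y'_t)_{t\in T}$ on $(\Om',\nF',\PP')$ to the original triple $(J_t,L_t,Y_t)_{t\in T}$ on $(\Om,\nF,\PP)$ by exploiting the common path-space image measure $\PP^{\nD}$. The organising principle is that independence of three $E$-valued processes is a property of their \emph{joint} law on the product of path spaces, and this joint law is one and the same measure for both triples.

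First I would record that all three original processes are measurable images of the single map $\psi$. Combining $X_t(\om)=x_t(\psi(\om))$ with \eqref{eq:Ltcorrespondence} and Proposition~\ref{prop:orginialprocesses}~(2), one has $L_t(\om)=l_t(\psi(\om))$, $J_t(\om)=j_t(\psi(\om))$ and hence $Y_t(\om)=y_t(\psi(\om))$ for almost all $\om\in\Om$ and every $t$; the analogous identities hold with $\psi'$ in place of $\psi$ on $\Om'$. The functionals $j,l,y\colon\nD(T;E)\arre\nD(T;E)$ are $\nF_{\nD}$-measurable by Lemmas~\ref{lem:jumpsmeasurable}, \ref{lem:leftlim} and \ref{lem:jointlymeasurable}, so the map $\Phi:=(j,l,y)$ is measurable into the product space $\big(\nD(T;E)^3,\nF_{\nD}^{\otimes 3}\big)$.

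Next I would identify both joint laws as pushforwards of the same measure. Writing the joint law of the original triple as $\Phi_*(\psi_*\PP)=\Phi_*\PP^{\nD}$ and that of the constructed triple as $\Phi_*((\psi')_*\PP')=\Phi_*\PP^{\nD}$, the established identity $\PP\circ\psi^{-1}=\PP'\circ(\psi')^{-1}=\PP^{\nD}$ shows that $(J,L,Y)$ under $\PP$ and $(J',L',Y')$ under $\PP'$ carry exactly the same law on $\nF_{\nD}^{\otimes 3}$. The almost-sure nature of the defining identities is harmless here, since altering a map on a null set does not change its pushforward.

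Finally I would invoke the construction. By Proposition~\ref{prop:constructedprocess} the summands $J',L',W'$ are independent, and since $Y'_t=W'_t+\g t$ is a deterministic translate of $W'_t$, the triple $(J',L',Y')$ is independent as well; equivalently, its joint law factorises into the product of its three marginals. As the joint law of $(J,L,Y)$ coincides with this factorising measure, it factorises too, which is precisely the asserted independence. The only points needing care are the measurability of $\Phi$ and the verification that independence is genuinely encoded in the joint law on the cylindrical product $\s$-algebra rather than in some finer structure; the former is settled by the cited lemmas, and the latter is the definition of independence for random elements of measurable spaces, so I expect no substantial obstacle beyond bookkeeping.
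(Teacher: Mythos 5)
Your proposal is correct and is essentially the paper's own argument: the paper proves the proposition in one line by noting the equality of the finite-dimensional distributions of $(J,L,Y)$, $(j,l,y)$ and $(J',L',Y')$ via the maps $\psi,\psi'$ and the common law $\PP^{\nD}$, and then using that $J'$, $L'$ and $Y'_t=W'_t+\g t$ are independent by the construction in Proposition~\ref{prop:constructedprocess}. Your version merely makes explicit the measurability of $(j,l,y)$ and the harmlessness of the null sets, which the paper leaves implicit.
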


\section{L\'evy-It\^o-decomposition}\label{sec:levyito}
\begin{theorem}[L\'evy-It\^o-decomposition]\label{thm:levyito}
Let $X$ be an $E$-valued L\'evy process with characteristics $(\g,Q,\nu,K)$ and $\nu$ locally reducible with reducing set $K$. Then there exist an $E$-valued Wiener process $(W_t)_{t\in T}$ with covariance operator $Q$, an independently scattered Poisson random measure $N$ on $T \times E$ with compensator $\lambda\ten \nu$ and a set $\Om_0\in \nF$ with $\PP(\Om_0)=1$ such that for all $\omega\in\Om_0$ one has
\begin{align}
X_t(\om)=\g t \;+ \;W_t(\om) \;+\!\! \int\limits_{[0,t]\times K}\!\!\!x\dd \wt{N}(s,x)(\om)\;+\!\!\int\limits_{[0,t]\times K^c}\!\!\! x\dd N(s,x)(\om)\label{eq:levyitolcs}
\end{align}
for all $t\in T$. Furthermore, all the summands in \eqref{eq:levyitolcs} are independent and the convergence of the first integral in the sense of \eqref{eq:Jtdef} is a.s.\ uniform in $t$ on bounded intervals in $E_K$ and $E$.
\end{theorem}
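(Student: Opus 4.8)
The plan is to read the decomposition directly off the processes already built in Section~\ref{sec:jumpprocesses} and to transport their distributional identifications from the constructed process $X'$ on $(\Om',\nF',\PP')$ to the original process $X$ through the common path-space law $\PP^\nD=\PP\circ\psi^{-1}=\PP'\circ(\psi')^{-1}$. The algebraic identity is tautological: by the very definition $Y_t:=X_t-L_t-J_t$ one has $X_t(\om)=\g t+(Y_t(\om)-\g t)+J_t(\om)+L_t(\om)$ for every $\om$, so it remains to recognise each summand. By Lemma~\ref{lem:Ltrep}, $L_t(\om)=\int_{[0,t]\times K^c}x\dd N(s,x)(\om)$ almost surely, which is the last integral. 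By Proposition~\ref{prop:orginialprocesses}, $J_t=\int_{[0,t]\times K}x\dd\wt{N}(s,x)$ in the sense of the series \eqref{eq:Jtdef}, and the same proposition furnishes the a.s.\ uniform convergence on bounded intervals in $E_K$ and in $E$ that the theorem asserts. Thus only the middle term needs to be identified as a Wiener process.

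The main step is to show that $W_t:=Y_t-\g t$ is an $E$-valued Wiener process with covariance operator $Q$. Since $X$, $L$ and $J$ are \cadlag, so are $Y$ and $W$. On the constructed space one has $Y'_t=W'_t+\g t$ with $W'$ a Wiener process of covariance $Q$, by Proposition~\ref{prop:constructedprocess} together with the construction of $W'$. The finite-dimensional distributions of $(J,L,Y)$ and $(J',L',Y')$ coincide, as recorded just before Proposition~\ref{prop:independence}; consequently $Y$ inherits from $Y'$ independent and stationary increments, weak continuity of its convolution semigroup, and Gaussian-distributed increments. Hence $Y$ is a \cadlag L\'evy process with Gaussian increments, and the implication (3)$\Rightarrow$(1) of Theorem~\ref{thm:BMinE} upgrades it to a continuous process. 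Therefore $W=Y-\g t$ is continuous and centred Gaussian, and matching the second moments $\EE\<W_t,a\>\<W_s,b\>=\EE\<W'_t,a\>\<W'_s,b\>=\min\{s,t\}\<Qa,b\>$ through the coinciding f.d.d.\ identifies its covariance operator as $Q$, again by Theorem~\ref{thm:BMinE}~(4).

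Independence of the four summands then follows from Proposition~\ref{prop:independence}, which yields independence of $(J_t,L_t,Y_t)$; the deterministic drift $\g t$ does not affect this, and $W=Y-\g t$ inherits independence from $Y$. That $N$ is an independently scattered Poisson random measure on $T\times E$ with compensator $\lambda\ten\nu$ is precisely Lemma~\ref{lem:PoisRM}. Finally, the exceptional set is handled by taking $\Om_0$ to be the intersection of the set of $\om$ with \cadlag paths, the set $\Om_1$ of Lemma~\ref{lem:Ltrep} on which the representation of $L_t$ holds for all $t$, and the set $\Om_2$ of Proposition~\ref{prop:orginialprocesses} on which the series for $J_t$ converges uniformly on bounded intervals; on $\Om_0$ the displayed identity \eqref{eq:levyitolcs} holds simultaneously for all $t\in T$.

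I expect the only genuine difficulty to be the middle step, namely promoting $Y$ from a merely \cadlag process to a continuous Wiener process. One cannot see continuity on the original space directly, since $Y$ is defined only as a difference of \cadlag processes; the device is to push the whole analysis through the path space $\nD(T;E)$, where the two constructions share the law $\PP^\nD$, read off the L\'evy and Gaussian-increment properties there, and then invoke the characterisation Theorem~\ref{thm:BMinE}. It is exactly the measurability lemmas of Section~\ref{sec:cadlagfunctionsinsuslinspaces} (in particular the $\nF_\nD$-measurability of the left-limit and joint-evaluation maps) that legitimise this path-space transfer in the Suslin setting.
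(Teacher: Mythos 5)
Your proposal is correct, and it reproduces the paper's architecture almost verbatim: the same tautological decomposition $X_t=\g t+(Y_t-\g t)+J_t+L_t$, the identification of $L$ via Lemma~\ref{lem:Ltrep}, of $J$ and its a.s.\ uniform convergence via Proposition~\ref{prop:orginialprocesses}, of $N$ via Lemma~\ref{lem:PoisRM}, and independence via Proposition~\ref{prop:independence}. The one place you genuinely diverge is the step you yourself flag as the main difficulty: continuity of $Y$. The paper argues \emph{pathwise}: setting $Z_t:=X_t-L_t$ (a L\'evy process with characteristics $(\g,Q,\nu|_K,K)$, by the same law transfer you use), every jump of $Z$ lies in $K$ and exceeds $n^{-1}\cdot K$ for some $n$, so the a.s.\ uniform convergence of the series \eqref{eq:Jtdef} means that subtracting $J$ erases \emph{all} jumps, and $Y=Z-J$ has continuous trajectories outright; Theorem~\ref{thm:BMinE} is then invoked only to recognise the already-continuous L\'evy process $Y$ as a Wiener process with drift and covariance $Q$. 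You instead stay at the level of laws: transfer the finite-dimensional distributions of $Y'=W'+\g t$ to $Y$, conclude that $Y$ is a \cadlag L\'evy process with Gaussian increments, and invoke the implication (3)$\Rightarrow$(1) of Theorem~\ref{thm:BMinE}. This is legitimate --- the paper does prove that implication, and its proof shows that the process itself (not merely a modification) has a.s.\ continuous sample paths, which is exactly what you need for \eqref{eq:levyitolcs} to hold for all $t$ on a single full-measure set $\Om_0$ --- but it is the heaviest implication in the paper, resting on the Feyel--de~La~Pradelle construction and the uniform-continuity transfer through path laws. So your route buys a purely distributional argument at the cost of re-routing through that machinery, whereas the paper's jump-cancellation argument is more economical at this juncture and, moreover, exhibits the concrete mechanism (the compensated partial sums reproduce exactly the small jumps of $Z$) that your argument never touches. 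Both are sound; minor bookkeeping points --- that independent scattering of $N$, not just the one-dimensional laws in Lemma~\ref{lem:PoisRM}, transfers through $\PP^\nD$, and that the WLOG centering inside the proof of (3)$\Rightarrow$(1) absorbs the drift $\g t$ --- go through exactly as in the paper.
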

\begin{proof} As the distribution $\mu_1$ of $X_1$ is locally reducible with reducing set $K\in\nK_0^s(E)$ it holds that $\nu(K^c)<\infty$ and $\nu|_K$ is a L\'evy measure on $E_K$.\sloppypar
Let $L_t$ and $J_t$ be defined as in \eqref{eq:Ltdef} and \eqref{eq:Jtdef}, respectively. By means of the mappings $\psi$ and $\psi'$ from Section~\ref{subsec:definitions}, the processes $(X_t,L_t,J_t)_{t\in T}$ and $(X'_t,L'_t,J'_t)_{t\in T}$ have the same finite-dimensional distributions. Therefore, the process $Z=(Z_t)_{t\in T}$ defined by $Z_t:=X_t-L_t$ has the same distribution as $X'_t-L'_t=J'_t+W'_t+\g t$ and is therefore a L\'evy process. The process $(Z_t)_{t\in T}$ has jumps of a size in $K$ and its characteristics are $(\g ,Q,\nu|_K,K)$, which is obtained by the characteristics of $X'_t-L'_t$.

A.s.\ uniform convergence of the series in \eqref{eq:Jtdef} and the fact that for every jump of $Z_t$ there exists an $n\in\NN$ such that the jump exceeds $n^{-1}\cdot K$ imply that for almost all $\om\in\Om$ the trajectory $Y_t(\om):=Z_t(\om)-J_t(\om)$ is continuous as all jumps are erased. The distribution of $J_t$ equals $\wt{\ee}(t\nu|_K)$ on $E$ by Theorem~\ref{thm:comppoisint}. Furthermore, $Y_t$ has characteristics $(\g,Q,0,K)$, namely the same as those of $X'_t-L'_t-J'_t$.

In addition, the continuous process $Y_t$ has stationary and independent increments, thus it is a continuous L\'evy process, a Wiener process with drift in $E$ by Theorem~\ref{thm:BMinE}.
The element $\g\in E$ satisfies $\EE\<Y_1,a\>=\<\g,a\>$ for all $a\in E'$. This is nothing else than $\g=\EE Y_1$ in the Pettis sense. Setting $W_t:=Y_t-\g t$ we obtain a centered Wiener process $W_t$. \sloppypar
Independence of the summands follows from Proposition~\ref{prop:independence}.
\end{proof}
We can even strengthen the result and let the process $X_t^0:=\g t + W_t+ J_t$ live in a Banach space $E_K$. In order to obtain this, we use Lemma~\ref{lem:sumsep}.

\begin{corollary}\label{cor:Xt0}
Let $E$ have a fundamental system of separable Banach disks. Then, there exists $K\in \nK_0^s$ such that $X_t=X_t^0+L_t$ and additionally,
\begin{enumerate}
\item $X_t^0$ has values in $E_K$ for all $t\in T$ a.s.\  and
\item $X_t^0$ is Bochner integrable and square integrable in $E_K$ for every $t\in T$.
\end{enumerate}
\end{corollary}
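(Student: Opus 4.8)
The plan is to manufacture a single disk $K\in\nK_0^s(E)$ that simultaneously reduces the jumps, carries the Gaussian part and absorbs the drift, and then to run the L\'evy--It\^o decomposition (Theorem~\ref{thm:levyito}) with this $K$ and read off $X_t^0=\g t+W_t+J_t$. For the jump part, Theorem~\ref{thm:Banachsupportzeroone} guarantees that $\mu_1$ is locally reducible, so a $\nu$-reducing disk $K_1\in\nK_0^s(E)$ exists; by Theorem~\ref{thm:comppoisint} and Proposition~\ref{prop:orginialprocesses} the compensated small-jump process $J$ then has almost all of its trajectories in $E_{K_1}$. For the Gaussian part I would argue in two steps, mirroring the proof of Theorem~\ref{thm:Banachsupportzeroone}: the law $\r:=\PP_{W_1}$ is a centered \emph{Radon} Gaussian measure since $E$ is Suslin, so there is a compact $C$ with $\r(C)>0$; its closed absolutely convex hull $\wh{C}\in\nK_0(E)$ is compact by \cite[Proposition~6.7.2]{jarchow}, and the fundamental system provides $B\in\nB_0^s(E)$ with $\wh{C}\subseteq B$. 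As $E_B$ is a Borel measurable linear subspace with $\r(E_B)\geq\r(\wh C)>0$, the Gaussian zero--one law forces $\r(E_B)=1$. Now $E_B$ is a \emph{separable Banach} space and therefore carries a fundamental family $\nK_0^s(E_B)\subseteq\nK_0^s(E)$ of $\|\cdot\|_B$-compact separable Banach disks, so the final assertion of Proposition~\ref{prop:GaussianEK}, applied to the Gaussian measure $\r\|_{E_B}$ on $E_B$, yields $K_2\in\nK_0^s(E_B)\subseteq\nK_0^s(E)$ with $\r(E_{K_2})=1$; Lemma~\ref{lem:unifH}(2) upgrades this to $\r_t(E_{K_2})=1$ for all $t$, and the path construction in Theorem~\ref{thm:BMinE} places the whole trajectory of $W$ in $E_{K_2}$ almost surely.

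Next I would assemble $K:=K_1+K_2+[-\g,\g]$, where $\g$ denotes the drift attached to the reducing set $K_1+K_2$; by Lemma~\ref{lem:sumsep} this is again in $\nK_0^s(E)$, and since $K\supseteq K_1$ it remains $\nu$-reducing by Lemma~\ref{lem:immediate}. Running Theorem~\ref{thm:levyito} with this $K$, the corresponding drift changes only by the Bochner integral $\int_{K\sm(K_1+K_2)}x\dd\nu$, which lies in $E_K$ because its domain sits inside the $\|\cdot\|_K$-unit ball $K$ and has finite $\nu$-mass; hence the drift stays in $E_K$. Together with $W_t\in E_{K_2}\subseteq E_K$ and $J_t\in E_K$ this gives $X_t^0=\g t+W_t+J_t\in E_K$ a.s.\ for all $t$, which is assertion~(1).

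For the integrability in~(2), the constant $\g t$ is trivially Bochner and square integrable in the separable Banach space $E_K$; for $W_t$ one invokes Proposition~\ref{prop:GaussianEK}(3) (Fernique), which yields an exponential moment of $\|\cdot\|_K$ and hence all polynomial moments. The process $J$ is the delicate point: its law is $\wt{\ee}(t\nu|_K)$, infinitely divisible on $E_K$ with L\'evy measure supported on the $\|\cdot\|_K$-unit ball $K$, i.e.\ $J$ has jumps bounded by $1$ in $E_K$. I would conclude via the standard moment theorem for infinitely divisible laws on separable Banach spaces, by which the strong $p$-th moment is finite as soon as the L\'evy measure has finite $p$-th moment off the unit ball -- here vacuously so; thus $J_t$ has moments of all orders and is in particular Bochner and square integrable. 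A finite sum of such vectors inherits both properties, giving~(2).

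The hard part, I expect, is exactly this square integrability of $J_t$: it cannot be deduced from the a.s.\ convergence established in Theorem~\ref{thm:comppoisint}, since in a non-Hilbertian $E_K$ one may well have $\int_K\|x\|_K^2\dd\nu=\infty$; the finiteness of the moments must instead come from the fact that a L\'evy process with bounded jumps in a separable Banach space has moments of every order. A smaller, purely technical hurdle is that the fundamental system only supplies (possibly non-compact) separable Banach disks, so producing a \emph{compact} separable Banach disk that supports the Gaussian law requires the two-step reduction through $E_B$ described above.
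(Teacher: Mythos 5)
Your proof is correct and follows essentially the same route as the paper: you assemble $K=K_1+K_2+K_3$ via Lemma~\ref{lem:sumsep}, take the $\nu$-reducing disk from Theorem~\ref{thm:Banachsupportzeroone}, support the Gaussian part through the zero--one law and the two-step reduction into a separable Banach disk $B$ (exactly the paper's construction of $K_2$), and obtain square integrability from Fernique (Proposition~\ref{prop:GaussianEK}) together with the moment theorem for infinitely divisible laws on separable Banach spaces with L\'evy measure supported in the unit ball, which is precisely what the paper's citation of P\'erez-Abreu--Rocha-Arteaga delivers. Your explicit verification that the drift recentering term $\int_{K\sm(K_1+K_2)}x\dd\nu$ lies in $E_K$ is a detail the paper leaves implicit, but it does not alter the argument.
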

\begin{proof}
Define $K:=K_1+K_2+K_3$, where $K_1\in \nK_0^s$ is $\nu$-reducing which exists due to Theorem~\ref{thm:Banachsupportzeroone}, the set $K_2\in \nK_0^s$ has positive (centered) Gaussian measure and the whole trajectory of $W$ stays in $E_{K_2}$ by the same arguments as in the proof of Theorem~\ref{thm:BMinE}. Indeed, $K_2$ exists, as there must be $K'\in\nK_0$ of positive measure, therefore $B\in\nB_0^s$ with $B\supseteq K'$ as $\nB_0^s$ is fundamental. The separable Banch space $E_B$ has full Gaussian measure. As $\nK_0^s(E_B)$ is fundamental in $E_B$, there exists $K_2\in\nK_0^s(E_B)\subseteq \nK_0^s(E)$ of positive Gaussian measure, therefore $E_{K_2}$ has full Gaussian measure. 

The set $K_3$ is the absolutely convex hull of $\g\in E$ and $E_{K_3}\cong \RR$ is clearly separable. Lemma~\ref{lem:sumsep} yields that $K\in \nK_0^s$. Note that $K$ is $\nu$-reducing by Lemma~\ref{lem:immediate}, $K$ has positive centered Gaussian measure (and therefore measure one by \cite[Theorem~2.5.5]{bogachev}) and $\g\in K$. Therefore, $X_t^0$ has values in $E_K$ a.s.\ and can be actually considered as a process in the Banach space $E_K$ by analogous arguments as above. (Square) integrability follows from Proposition~\ref{prop:GaussianEK} and  \cite[Corollary~3.4]{perezabreu_rochaarteaga_2003}.
\end{proof}

\begin{proposition}\label{prop:eqBanach}
Let $E$ be a space with a fundemental system of separable Banach disks and $(X_t)_{t\in T}$ a L\'evy process with values in $E$ and characteristics $(\g,Q,\nu,K)$. The following assertions are equivalent:
\begin{enumerate}
\item There exists a $t_0\in T\setminus \{0\}$ such that $X_{t_0}$ takes values a.s.\ in a separable Banach space $E_1$ with closed unit ball compact in $E$.
\item There exists a $t_0\in T\setminus \{0\}$ such that $\PP_{X_{t_0}}$ has a Banach support $E_1$ with closed unit ball compact in $E$.
\item For all $t\in T$ one has $X_t\in E_1$ a.s.\ for a separable Banach space $E_1$ with closed unit ball compact in $E$.
\item For all $t\in T$ the distribution $\PP_{X_t}$ has a Banach support $E_1$ with closed unit ball compact in $E$.
\item $\nu$ has a Banach support $E_2$ with closed unit ball compact in $E$.
\end{enumerate}
Given (5), one can choose $E_1=E_K+E_2+E_3+\RR\g$, where $E_3$ is a suitable Banach support of the Gaussian part of $X$ and $K$ is $\nu$-reducing.
\end{proposition}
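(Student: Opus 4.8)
The plan is to prove the cycle $(3)\Rightarrow(1)\Rightarrow(5)\Rightarrow(3)$ together with the two bookkeeping equivalences $(1)\Leftrightarrow(2)$ and $(3)\Leftrightarrow(4)$. The latter hold by definition: for a measurable linear subspace of the form $E_1=E_B$ with $B\in\nK_0^s(E)$, saying that $X_{t_0}$ takes values a.s.\ in $E_1$ is the same as $\PP_{X_{t_0}}(E_1)=1$, i.e.\ $E_1$ is a Banach support of $\PP_{X_{t_0}}$; and $(3)\Rightarrow(1)$ is the trivial specialisation to any fixed $t_0\neq 0$. As a setup I would first fix, using Theorem~\ref{thm:Banachsupportzeroone}, a $\nu$-reducing set $K\in\nK_0^s(E)$ and take it as the truncation set of the characteristics (adjusting $\g$ if necessary; this leaves $\nu$ and $Q$ unchanged and does not affect any of the five assertions), so that $\nu|_K$ is concentrated on $K\subseteq E_K$. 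Writing the L\'evy--It\^o decomposition $X_t=\g t+W_t+J_t+L_t$ (Theorem~\ref{thm:levyito}), Corollary~\ref{cor:Xt0} shows that the partial sum $X^0_t=\g t+W_t+J_t$ lives a.s.\ in a separable Banach space $E_{K^*}$ with $K^*=K_\g+B_3+K\in\nK_0^s(E)$, where $K_\g$ is the absolutely convex hull of $\g$ and $B_3$ a Gaussian support of $W$; note $K\subseteq K^*$. Thus only the large-jump part $L_t$, whose law is $\ee(t\nu|_{K^c})$ by Lemma~\ref{lem:Ltrep}, can obstruct a Banach support.

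For $(1)\Rightarrow(5)$ I would suppose $X_{t_0}\in E_{B_1}$ a.s.\ for some $B_1\in\nK_0^s(E)$ and some fixed $t_0>0$. Since $X^0_{t_0}\in E_{K^*}$ a.s., the difference $L_{t_0}=X_{t_0}-X^0_{t_0}$ lies a.s.\ in $E_{B_1}+E_{K^*}=E_{B'}$, where $B':=B_1+K^*\in\nK_0^s(E)$ by Lemma~\ref{lem:sumsep}. Hence $\ee(t_0\nu|_{K^c})(E\setminus E_{B'})=0$, and because the defining series \eqref{eq:PoissonSeries} of the Poisson exponential converges setwise with nonnegative terms, each summand must vanish on $E\setminus E_{B'}$. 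The $n=1$ term then gives $t_0\,\ee^{-t_0\nu(K^c)}\,\nu|_{K^c}(E\setminus E_{B'})=0$, so $\nu|_{K^c}$ is concentrated on $E_{B'}$. Since $\nu|_K$ is concentrated on $K\subseteq K^*\subseteq E_{B'}$, the whole L\'evy measure $\nu=\nu|_K+\nu|_{K^c}$ is concentrated on the separable Banach space $E_{B'}$, which is $(5)$ with $E_2=E_{B'}$.

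For $(5)\Rightarrow(3)$ I would start from a Banach support $E_2=E_{B_2}$, $B_2\in\nK_0^s(E)$, of $\nu$. Then $\nu|_{K^c}$ is concentrated on the linear subspace $E_{B_2}$, hence so is every convolution power $(\nu|_{K^c})^{*n}$, and the setwise series \eqref{eq:PoissonSeries} shows $L_t\sim\ee(t\nu|_{K^c})$ is concentrated on $E_{B_2}$ for every $t$. Combining with $X^0_t\in E_{K^*}$ a.s.\ yields $X_t\in E_{K^*}+E_{B_2}=E_{K^*+B_2}$ a.s.\ for all $t$, and $K^*+B_2\in\nK_0^s(E)$ by iterating Lemma~\ref{lem:sumsep}. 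Unwinding $K^*=K_\g+B_3+K$, this common support is precisely $E_1=E_K+E_2+E_3+\RR\g$ with $E_3=E_{B_3}$, which establishes $(3)$, hence $(4)$, and the final displayed choice of $E_1$.

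The only genuine difficulty is the direction $(1)\Rightarrow(5)$: extracting global information about $\nu$ from a single time marginal. Two ingredients make it work, and I would emphasise both. First, Corollary~\ref{cor:Xt0} guarantees that the drift, Gaussian and small-jump parts \emph{always} share a Banach support, which isolates $L$ as the sole carrier of the relevant mass; without this one could not separate the ``spread out'' small-jump contribution from the large jumps. Second, the explicit compound-Poisson law $\PP_{L_{t_0}}=\ee(t_0\nu|_{K^c})$ together with the setwise convergence and nonnegativity of \eqref{eq:PoissonSeries} lets the single-jump ($n=1$) term detect exactly the mass of $\nu|_{K^c}$ outside $E_{B'}$. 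Throughout, the recurring technical point is to keep every auxiliary set inside $\nK_0^s(E)$, which is ensured by Lemma~\ref{lem:sumsep} and the identity $E_{B}+E_{B'}=E_{B+B'}$.
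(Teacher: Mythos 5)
Your proof is correct, but it follows a genuinely different route from the paper's. The paper argues entirely at the level of distributions: (2)$\Rightarrow$(4) follows from the root lemma (Lemma~\ref{lem:infinitedivsubspace}); for (2)$\Rightarrow$(5) it restricts $\PP_{X_{t_0}}$ to the Banach space $E_1$, where it is infinitely divisible by that same lemma, and identifies its Banach-space L\'evy measure with $\nu$ via null extensions, Theorem~\ref{thm:restLevy} and uniqueness of the L\'evy measure (which even allows the sharper choice $E_2=E_1$); and for (5)$\Rightarrow$(2) it factorises $\wt{\ee}(\nu)=\wt{\ee}(\nu|_K)*\ee(\nu|_{K^c})$, performs the same support arithmetic with Lemma~\ref{lem:sumsep} that you do, and again invokes Lemma~\ref{lem:infinitedivsubspace} to pass from $\mu_1$ to $\mu_{t_0}$. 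You instead work pathwise: Theorem~\ref{thm:levyito} together with Corollary~\ref{cor:Xt0} isolates $L_t$ as the only obstruction to a Banach support; your (1)$\Rightarrow$(5) replaces the L\'evy-measure-uniqueness argument by the elementary observation that the $n=1$ term of the setwise series \eqref{eq:PoissonSeries} for $\ee(t_0\nu|_{K^c})$ detects $\nu|_{K^c}(E\setminus E_{B'})$; and your (5)$\Rightarrow$(3) treats all $t$ simultaneously (indeed on one null set), dispensing with the root argument entirely. The trade-off is clear: your route presupposes the heavier pathwise machinery of Theorem~\ref{thm:levyito} and Corollary~\ref{cor:Xt0} -- legitimately, since both precede the proposition and do not depend on it -- whereas the paper needs only the measure-level apparatus; in exchange, your key implication is more elementary and self-contained, and your version of (3) is marginally stronger, at the cost of producing the possibly larger support $E_2=E_{B_1+K^*}$ in (5), which is harmless since (5) only asserts existence. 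Your supporting identifications ($E_1=E_{B_1}$ with $B_1\in\nK_0^s(E)$, since the gauge of the compact, hence closed, unit ball recovers the norm; $E_B+E_{B'}=E_{B+B'}$; and the harmless re-choice of the truncation set as a $\nu$-reducing $K$ with adjusted drift) are all sound.
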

\begin{proof}
The equivalence of (1) and (2) resp.\ (3) and (4) and the implication (4) $\Rightarrow$ (1) are obvious and
(2) $\Rightarrow$ (4) follows from Lemma~\ref{lem:infinitedivsubspace}.\smallskip

(2) $\Rightarrow$ (5):  If (2) holds, $\PP_{X_{t_0}}$ is infinitely divisible on the Banach space $E_1$ and thus, there exists a L\'evy measure $\nu'$ on $E_1$. Injecting $E_1$ into $E$, finding the null extensions of $\PP_{X_{t_0}}$ and $\nu'$ and using Theorem~\ref{thm:restLevy} and uniqueness of the L\'evy measure of an infinitely divisible distribution, one obtains $\nu'^0=\nu$ and therefore, $\nu$ has Banach support $E_1$. One can choose $E_2=E_1$ and obtain assertion (5). \smallskip

(5) $\Rightarrow$ (2): Let $K\in\nK_0^s$ be $\nu$-reducing. Then, $\wt{\ee}(\nu)=\wt{\ee}(\nu|_K)*\ee(\nu|_{K^c})$ has Banach support $E_K+E_2$. Let $K_2\in \nK_0^s$ be the closed unit ball of $E_2$. The Gaussian part $\r$ has Banach support $E_3=E_H$ for some $H\in\nK_0^s$ and $\d_\g$ has Banach support $\RR \g$. Put $K':=K+K_2+H+[-1,1]\cdot \g$ which is in $\nK_0^s$ by Lemma~\ref{lem:sumsep}. Then, $\mu:=\wt{\ee}(\nu)*\r*\d_\g$ has Banach support $E_1:=E_{K'}=E_K+E_2+E_3+\RR \g$ and as $\mu\|_{E_1}$ is infinitely divisible on $E_1$, there exists a root $(\mu\|_{E_1})^{*t_0}=(\mu^{*t_0})\|_{E_1}$ by Lemma~\ref{lem:infinitedivsubspace}. Therefore, $\PP_{X_{t_0}}=\mu_{t_0}=\mu^{*t_0}$ also has Banach support $E_1$  which is the assertion.
\end{proof}

%
%
\appendix

\section{Some functional analysis}\label{app:sfa}
In this appendix we investigate conditions for $\nK_0^s(E)$ or $\nB_0^s(E)$ being fundamental in $\nK_0(E)$.
\begin{proposition}\label{prop:bacls}
In Fr\'echet spaces $\nK_0^s(E)$ is fundamental in $\nK_0(E)$.
\end{proposition}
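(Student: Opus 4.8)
The plan is to enlarge an arbitrary $K\in\nK_0(E)$ to a compact \emph{separable} Banach disk; the natural candidate is the closed absolutely convex hull of a null sequence, and the proof splits into a classical embedding step and a separability step.

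First I would invoke the classical fact that in a Fr\'echet space every compact set is contained in the closed absolutely convex hull $B:=\overline{\Gamma}(\{a_n:n\in\NN\})$ of a sequence $(a_n)_n$ with $a_n\to 0$ (see e.g.\ \cite{jarchow}). Since $E$ is complete, \cite[Proposition~6.7.2]{jarchow} shows that $B$ is compact; being closed, absolutely convex and bounded (compact sets are bounded), we have $B\in\nK_0(E)$ and $K\subseteq B$. It then remains to prove that $(E_B,\|\cdot\|_B)$ is separable, i.e.\ that $B\in\nK_0^s(E)$, since $\nK_0^s(E)=\nB_0^s(E)\cap\nK_0(E)$.

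For the separability I would realise $E_B$ as a quotient of $\ell^1$. Define $T\colon \ell^1\arre E_B$ by $T\xi:=\sum_n \xi_n a_n$. As $(a_n)$ is bounded in $E$ the series converges absolutely in $E$; its partial sums lie in $\|\xi\|_1\cdot B$, and since $E_B$ is a Banach space the series also converges in $\|\cdot\|_B$ with $\|T\xi\|_B\leq\|\xi\|_1$, so $T$ is linear and $\|\cdot\|_1$-to-$\|\cdot\|_B$ continuous with $Te_n=a_n$. The crucial step is to show $T(U)=B$ for the closed unit ball $U:=\{\xi\in\ell^1:\|\xi\|_1\leq 1\}$. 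For any $f\in E'$ one has $(f(a_n))_n\in c_0$ because $a_n\to 0$, whence $f(T\xi)=\sum_n \xi_n f(a_n)$ depends $\sigma(\ell^1,c_0)$-continuously on $\xi$; thus $T$ is continuous from $(U,\sigma(\ell^1,c_0))$ into $(E,\sigma(E,E'))$. By the Banach--Alaoglu theorem $U$ is $\sigma(\ell^1,c_0)$-compact, so $T(U)$ is weakly compact, hence weakly closed. As $T(U)$ contains all finite absolutely convex combinations of the $a_n$, whose weak closure equals $B$ by Mazur's theorem, and as $T(U)\subseteq B$ ($B$ being closed), I would conclude $T(U)=B$.

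Separability then follows at once: the countable set $D$ of finitely supported rational sequences in $U$ is $\|\cdot\|_1$-dense in $U$, and continuity of $T$ gives that $T(D)$ is a countable $\|\cdot\|_B$-dense subset of $T(U)=B$. Hence $(B,\|\cdot\|_B)$ is separable and so is $E_B=\bigcup_n nB$, giving $B\in\nK_0^s(E)$ as required. I expect the main obstacle to be the first step: producing the null sequence $(a_n)$ with $K\subseteq\overline{\Gamma}(\{a_n\})$ needs the nontrivial telescoping/covering construction available only because $E$ is metrizable, and I would either cite it or build it from a base $(U_k)$ of absolutely convex neighbourhoods with $U_{k+1}+U_{k+1}\subseteq U_k$ together with finite $1/k$-nets of $K$. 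The enlargement from $K$ to $B$ is genuinely necessary, since for a general compact disk the norm $\|\cdot\|_K$ is strictly finer than the induced topology and $E_K$ may fail to be separable.
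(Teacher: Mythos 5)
Your proof is correct, but it takes a genuinely different route from the paper's. The paper's proof interpolates compactness (via the lemma of Hogbe-Nlend--Moscatelli that a compact disk $K$ is compact in $E_A$ for some larger compact disk $A$, and once more inside $E_A$) and then applies the Davis--Figiel--Johnson--Pe{\l}czy{\'n}ski factorisation of weakly compact operators to produce a \emph{reflexive} Banach space $Y$ continuously embedded in $E_A$; the blown-up unit ball $K_0$ of the closed linear span of $K$ in $Y$ is separable and weakly compact, hence a compact separable Banach disk containing $K$. You instead invoke the Grothendieck-type fact that in a Fr\'echet space $K\subseteq B=\overline{\Gamma}(\{a_n\})$ for some null sequence $(a_n)$, and prove separability of $E_B$ by exhibiting it as the continuous image of $\ell^1$ under $T\xi=\sum_n \xi_n a_n$. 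Your key step $T(U)=B$ is sound: $T$ is $\sigma(\ell^1,c_0)$-to-$\sigma(E,E')$ continuous because $(f(a_n))_n\in c_0$ for $f\in E'$, the ball $U$ is $\sigma(\ell^1,c_0)$-compact by Banach--Alaoglu since $\ell^1=(c_0)'$, and Mazur's theorem identifies the weak closure of the finite absolutely convex combinations with $B$; the remaining steps (compactness of $B$ from completeness, $\|T\xi\|_B\leq \|\xi\|_1$ using that $E_B$ is a Banach space, $E_B=T(\ell^1)$) all check out, and the null-sequence fact you rely on is indeed classical for metrizable spaces, exactly as you sketch. Comparing the two: your argument is more elementary and self-contained (no factorisation theorem) and yields the extra information that $E_B$ is a quotient of $\ell^1$; the paper's construction yields structure that the appendix reuses later, namely that $E_{K_0}$ is \emph{reflexive} (this is behind the observation that the $\nK_0$-co-Schwartz property depends only on the duality, cf.\ Corollary~\ref{cor:compatibleqcSLS}) and that $K$ is $\|\cdot\|_{K_0}$-compact, i.e.\ the embedding $E_K\hookrightarrow E_{K_0}$ is a compact operator, which is what makes Fr\'echet spaces $\nK_0$-co-Schwartz (Corollary~\ref{cor:frechetqcS}). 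Your $B$ provides neither of these directly --- $K$ is merely bounded in $(E_B,\|\cdot\|_B)$ --- but for the proposition as stated they are not needed.
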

\begin{proof}
The proof is essentially given in \cite[Theorem~3.6.5]{bogachev} but presented here for convenience: If $K\in \nK_0$ there exists $A\in \nK_0$ such that $K$ is compact in $E_A$, cf.\ \cite[Lemma~p.18]{nuclearconuclear}. Again, in $E_A$ there exists $C\in\nK_0(E_A)\subseteq \nK_0(E)$ with $K$ compact in $E_C$ and $C$ is compact in $E_A$. A factorisation lemma of Davis-Fiegel-Johnson-Pelchi{\'n}sky \cite[Corollary~1]{davis:figiel:74} for weakly compact operators provides a Banach space $Y$ which is reflexive and continuously embedded into $E_A$ and $C$ is bounded in $Y$. This continuity of $E_C\rightarrow Y$ yields that $K$ is compact in $Y$. Taking the closure $L$ in $Y$ of the linear hull of $K$ yields a reflexive subspace of $Y$ which is separable as it is the closure of the image of the compact mapping $J\colon E_K\rightarrow Y$, where $J$ is the natural embedding). By blowing up by a suitable factor, the closed unit ball $K_0$ of $L$ can be chosen such that $K$ is contained in $K_0$ by continuity. Finally, $K_0$ is compact in $E$ as reflexivity implies that $K_0$ is weakly compact in $L$ and therefore weakly compact in $E$ by continuity of the natural embedding. But this implies that $K_0$ is weakly closed and by convexity and precompactness in $E$ we have $K_0\in\nK_0(E)$.
\end{proof}
The following example establishes the connection of our approach to the work of \"Ust\"unel, cf.\ \cite{addprocessesnuclear}, who considered L\'evy processes with values in strong duals of nuclear spaces which are nuclear and Suslin. In these spaces $\nK_0^s$ is fundamental in $\nK_0$.
\begin{example}\label{ex:NuSCLS}
Let $E'$ be a nuclear Suslin space and a strong dual of a separable barreled nuclear space $E$. Then, if $K$ is a compact set in $E'$, there exists an absolutely convex compact set $S\supset K$ such that $E'_S$ is a separable Hilbert space. Thus, $\nK_0^s(E')$ is fundamental in $\nK_0(E')$.
\end{example}
\begin{proof}
In a nuclear space there exists a neighbourhood base $\nU$ such that for all $U\in \nU$ the completion of the space $E/p_U^{-1}(\{0\})$ is a separable Hilbert space $E_{(U)}$. Its dual space can be identified with $E'_{U^\circ}$, where $U^\circ$ is the polar of $U$. Define $\nK':=\{U^\circ \colon U\in \nU\}$ which is a fundamental system of closed bounded sets in $E'$ because $E$ is barreled, cf.\ \cite[5.2,~p.141]{Schaefer_TopologicalVectorSpaces}. As $E'$ is nuclear, all bounded sets are precompact, cf.\ \cite[p. 101, Corollary~2]{Schaefer_TopologicalVectorSpaces} and $\nK'$ consists of compact sets only. Choosing a compact set $K$ in $E'$ one finds an $S\in \nK'$ with $S=U^\circ$, $U\in \nU$, such that $K\subseteq S$. Consequently, $K\subseteq E'_S\cong (E_{(U)})'$ which is a separable Hilbert space.
\end{proof}
\begin{remark} \"Ust\"unel claims in his proof of Theorem III.1 \cite{addprocessesnuclear} that in his setting for a given $K\in\nK_0(E')$ one can always choose $S\in\nK'$ such that $K\subseteq S$. This means $\nK'$ is a fundemental system of bounded sets, which is only the case if $E$ is barreled. But this assumption is missing in the mentioned paper.
\end{remark}
One easily verifies the following stability properties:
\begin{lemma}
\begin{enumerate}
\item If $E_1,\ldots,E_n$ have fundamental systems of Banach disks, so does the locally convex direct sum $E_1\oplus\ldots \oplus E_n$.
\item If $E$ has a fundamental system of Banach disks, so does every closed subspace $F$.
\end{enumerate}
\end{lemma}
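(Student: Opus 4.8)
The plan is to treat the two statements separately, reducing each to the corresponding elementary fact about separable Banach spaces (finite products and closed subspaces of separable Banach spaces are again separable Banach spaces) together with the continuity of the canonical injections $\imath\colon E_B\hookrightarrow E$.

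For part (1), I would first use that for finitely many factors the locally convex direct sum coincides with the product, so that $E=E_1\times\cdots\times E_n$ with continuous coordinate projections $\pi_i\colon E\arre E_i$. Given a compact disk $K\in\nK_0(E)$, I would set $K_i:=\pi_i(K)$. Each $K_i$ is absolutely convex as the linear image of an absolutely convex set, bounded as the continuous linear image of a bounded set, and compact (hence closed in the Hausdorff space $E_i$) as a continuous image of a compact set; thus $K_i\in\nK_0(E_i)$. By hypothesis there is a separable Banach disk $B_i\in\nB_0^s(E_i)$ with $K_i\subseteq B_i$. I would then take $B:=B_1\times\cdots\times B_n$. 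This is a closed disk in $E$ (a finite product of closed, bounded, absolutely convex sets); its gauge satisfies $\|x\|_B=\max_i\|x_i\|_{B_i}$, so that $E_B=E_{B_1}\times\cdots\times E_{B_n}$ is the finite product Banach space, which is separable because each factor is. Hence $B\in\nB_0^s(E)$, and since $x\in K$ implies $\pi_i(x)\in K_i\subseteq B_i$ for all $i$, one has $K\subseteq B$.

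For part (2), let $F\subseteq E$ be a closed subspace and $K\in\nK_0(F)$. Since $F$ carries the induced topology, $K$ is again an absolutely convex, bounded and compact subset of $E$, so $K\in\nK_0(E)$, and by hypothesis there is $B\in\nB_0^s(E)$ with $K\subseteq B$. I would set $B_F:=B\cap F$. This is absolutely convex and bounded in $F$, and it is closed in $F$ because $B$ and $F$ are both closed in $E$; thus $B_F\in\nB_0(F)$. The key identification is $F_{B_F}=E_B\cap F$ with norm the restriction of $\|\cdot\|_B$, which follows from $m(B\cap F)=mB\cap F$ (as $F$ is a subspace). Since the canonical injection $\imath\colon E_B\hookrightarrow E$ is continuous and $F$ is closed in $E$, the set $E_B\cap F=\imath^{-1}(F)$ is a closed subspace of the separable Banach space $E_B$, hence itself a separable Banach space. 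Therefore $B_F\in\nB_0^s(F)$, and $K\subseteq B\cap F=B_F$ finishes the argument.

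Both statements are elementary once the right candidates ($\prod_i B_i$ and $B\cap F$) are chosen; the only point requiring a little care, and the one I expect to be the main obstacle, is verifying completeness (i.e.\ the Banach property) of the constructed disks. In part (1) this reduces to identifying $E_B$ with the finite product Banach space via the formula $\|x\|_B=\max_i\|x_i\|_{B_i}$, and in part (2) it hinges on showing that $E_B\cap F$ is \emph{closed} in $(E_B,\|\cdot\|_B)$, for which continuity of $\imath\colon E_B\hookrightarrow E$ together with closedness of $F$ in $E$ is exactly what is needed. Separability in both cases is then immediate from the corresponding property of separable metric spaces.
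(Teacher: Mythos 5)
Your proof is correct, and since the paper states this lemma without proof (``One easily verifies the following stability properties''), your argument is exactly the intended verification: the candidates $B_1\times\cdots\times B_n$ and $B\cap F$ are the natural ones, and you correctly handle the only two points needing care, namely the identification $\|x\|_B=\max_i\|x_i\|_{B_i}$ making $E_B$ the product Banach space $E_{B_1}\times\cdots\times E_{B_n}$, and the closedness of $E_B\cap F=\imath^{-1}(F)$ in $(E_B,\|\cdot\|_B)$ via continuity of $\imath\colon E_B\hookrightarrow E$ and closedness of $F$ in $E$. As a side remark, the same constructions also give the compact variant mentioned in Example~\ref{ex:sepext}, since a finite product of compact disks is compact and $B\cap F$ is compact whenever $B$ is.
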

\subsection{A sufficient condition for $\nK_0^s(E)$ being fundamental}
The construction in Proposition~\ref{prop:bacls} is only known for Fr\'echet spaces. If for every compact disk $K$ in $E$ one can find a larger compact disk $B$ with compact embeddings $E_K\hookrightarrow E_{B} \hookrightarrow E$ one obtains a similar result. 
\begin{proposition}\label{prop:cpembed}
Let $K\in\nK_0(E)$. If there exists a compact disk $B\subseteq E$ containing $K$ and such that the canonical injection
$J\colon E_{K}\rightarrow E_B$ is compact, then there is also a compact disk $K_0\in\nK_0^s(E)$ containing $K$. 
\end{proposition}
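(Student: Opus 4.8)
The plan is to closely mirror the argument of Proposition~\ref{prop:bacls} from the point where a compact embedding between Banach disks is available: there this embedding is manufactured by hand via \cite{nuclearconuclear}, whereas here the hypothesis on $B$ supplies it directly. The engine of the construction is the reflexivity trick of Davis--Figiel--Johnson--Pe\l czy\'nski.

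First I would note that compactness of the canonical injection $J\colon E_K\to E_B$ says exactly that $K$, the closed unit ball of $E_K$, is relatively compact in the Banach space $E_B$; being a disk, $W:=\overline{K}^{\,E_B}$ is then an absolutely convex, norm-compact, hence weakly compact subset of $E_B$. I would apply the factorisation lemma \cite[Corollary~1]{davis:figiel:74} to $W$ to obtain a reflexive Banach space $Y$ with a continuous injection $\jmath\colon Y\hookrightarrow E_B$ in which $W$ is bounded, together with a factorisation $J=\jmath\circ\alpha$, $\alpha\colon E_K\to Y$.

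The crucial point---and the place where compactness of $J$ (rather than mere weak compactness) is used---is that $\alpha$ is itself a compact operator, i.e.\ $K$ is relatively compact in $Y$ and not just bounded. This is the analogue of the step ``this continuity of $E_C\to Y$ yields that $K$ is compact in $Y$'' in Proposition~\ref{prop:bacls}: the DFJP norm on $Y$ is an $\ell^2$-sum $\|\cdot\|_Y=(\sum_n\|\cdot\|_n^2)^{1/2}$ of the gauges $\|\cdot\|_n$ of the sets $2^nW+2^{-n}B$ (here $B$ is the closed unit ball of $E_B$), and for $y\in W$ one has $\|y\|_n\le 2^{-n}$, so the tails $\sum_{n>N}\|\cdot\|_n^2$ are uniformly small on $K\subseteq W$; since each $\|\cdot\|_n$ is dominated by $\|\cdot\|_B$, total boundedness of $K$ in $E_B$ controls the finitely many leading terms, and $K$ is totally bounded, hence relatively compact, in the Banach space $Y$. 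I expect this to be the only genuinely non-routine step.

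With $\alpha$ compact I would set $L:=\overline{\mathrm{span}(K)}^{\,Y}$. As the closure of the range of the compact operator $\alpha$ it is separable, and as a closed subspace of the reflexive space $Y$ it is reflexive; moreover $K\subseteq L$ and $K$ is bounded in $L$, so I may pick $r>0$ with $K\subseteq rB_L$ and put $K_0:=rB_L$ (the closed ball of radius $r$ in $L$). Then $K\subseteq K_0$, $K_0$ is absolutely convex, and $E_{K_0}=L$ is a separable Banach space, so only $K_0\in\nK_0(E)$ remains. For this, reflexivity of $L$ makes $K_0$ weakly compact in $L$, whence weakly compact---thus weakly closed, thus $\t$-closed by convexity---in $E$ via the continuous embedding $L\hookrightarrow Y\hookrightarrow E_B\hookrightarrow E$; on the other hand this embedding factors through the \emph{compact} injection $\imath\colon E_B\hookrightarrow E$ (compact because $B\in\nK_0(E)$), so the bounded set $K_0$ is precompact in $E$. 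A $\t$-closed precompact subset of the complete space $E$ is compact, so $K_0\in\nK_0(E)$ and therefore $K_0\in\nK_0^s(E)$ with $K\subseteq K_0$, as required.
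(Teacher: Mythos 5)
Your proof is correct, but it takes a more hands-on route than the paper, which disposes of the proposition in two lines: since $J$ is compact, $K$ is precompact in $E_B$, and it is closed there because $K$ is compact in $E$ and the injection $E_B\hookrightarrow E$ is continuous; hence $K$ is compact in the Banach space $E_B$, and Proposition~\ref{prop:bacls} (applied in the Fr\'echet space $E_B$, with $\nK_0^s(E_B)\subseteq \nK_0^s(E)$) immediately yields $K_0$. You instead re-derive the Banach-space case of Proposition~\ref{prop:bacls} inline: you apply Davis--Figiel--Johnson--Pe\l czy\'nski directly to $W=\overline{K}^{\,E_B}$ and then prove the compactness transfer to the interpolation space $Y$ by the explicit tail estimate $\|y\|_n\le 2^{-n}$ on $W$ together with $\|\cdot\|_n\le 2^n\|\cdot\|_B$, whereas the paper's Proposition~\ref{prop:bacls} sidesteps this by first interposing a compact disk $C$ (via \cite[Lemma~p.18]{nuclearconuclear}) so that compactness of $K$ in $Y$ follows from mere continuity of $E_C\to Y$. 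What each buys: the paper's reduction is shorter and reuses existing machinery; yours is self-contained, avoids the intermediate disk, and in fact makes explicit a quantitative fact (norm-compact $W$ stays compact in the DFJP space) that is stronger than what the cited factorisation lemma states. Two cosmetic remarks: weakly closed already implies $\t$-closed without invoking convexity (convexity is needed only for the converse), and your appeal to compactness of $\imath\colon E_B\hookrightarrow E$ for precompactness of $K_0$ is legitimate by the remark in Section~\ref{sec:separablebanachspaces} that this injection is a compact mapping for $B\in\nK_0(E)$.
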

\begin{proof}
First we note that $K$ is compact in $E_B$ as it is precompact by definition and closed by virtue of continuity of $E_B\hookrightarrow E$. The Banach space $E_B$ allows to find the desired compact set $K_0\in\nK_0^s(E_B)\subseteq \nK_0^s(E)$ by Proposition~\ref{prop:bacls}.
\end{proof}

In the literature, e.g.\ \cite{nuclearconuclear,jarchow}, the notion of a co-Schwartz space is well-established. Let $\mathfrak{S}$ be a system of bounded absolutely convex sets in $E$. A locally convex Hausdorff space $E$ is an \emph{$\mathfrak{S}$-co-Schwartz space} if for every $B\in \mathfrak S$ there exists $C\supseteq B$, $C\in\mathfrak S$ such that the natural embedding of the normed spaces $J_{BC}\colon E_B\rightarrow E_C$ admits a compact extension (to the completions). If $\mathfrak S$ is the space of closed disks, $E$ is called a co-Schwartz space. In the following, we choose $\mathfrak S=\nK_0(E)$. By Proposition~\ref{prop:cpembed} we obtain:

\begin{corollary}\label{cor:qcSCLS}
In $\nK_0$-co-Schwartz spaces the family $\nK_0^s(E)$ is fundamental.
\end{corollary}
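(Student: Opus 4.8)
The plan is to deduce the statement directly from Proposition~\ref{prop:cpembed}, using the defining property of a $\nK_0$-co-Schwartz space to manufacture, for each compact disk, a larger compact disk through which it embeds compactly. Since Proposition~\ref{prop:cpembed} already converts such a compact embedding into the existence of a separable Banach disk containing the given set, the corollary becomes essentially a matter of invoking the co-Schwartz hypothesis in the right form.

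Concretely, I would fix an arbitrary $K\in\nK_0(E)$ and show it is contained in some element of $\nK_0^s(E)$. Applying the $\nK_0$-co-Schwartz property (so $\mathfrak{S}=\nK_0(E)$) to the set $K$ itself yields a compact disk $C\in\nK_0(E)$ with $C\supseteq K$ such that the natural embedding $J_{KC}\colon E_K\to E_C$ admits a compact extension to the completions. I would then feed $C$ into Proposition~\ref{prop:cpembed}: that proposition produces, from a compact disk $B\supseteq K$ whose canonical injection $E_K\to E_B$ is compact, a set $K_0\in\nK_0^s(E)$ with $K\subseteq K_0$, so taking $B:=C$ delivers exactly the required separable Banach disk. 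As $K$ was arbitrary, $\nK_0^s(E)$ is fundamental in $\nK_0(E)$.

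The only point requiring a word of care --- and the nearest thing to an obstacle --- is matching the phrase ``compact extension to the completions'' in the co-Schwartz definition with the hypothesis ``the canonical injection $J\colon E_K\to E_B$ is compact'' of Proposition~\ref{prop:cpembed}. Here I would invoke completeness of $E$: because $K$ and $C$ are compact, hence closed, disks, the associated gauged spaces $E_K$ and $E_C$ are already Banach spaces, as recorded in the preliminaries on $\nB_0$. Consequently no completion is needed and the compact extension is simply $J_{KC}$ itself, so $J_{KC}\colon E_K\to E_C$ is a compact map between Banach subspaces of $E$, which is precisely the input Proposition~\ref{prop:cpembed} demands. Beyond this bookkeeping the argument is immediate, and I expect no genuine difficulty.
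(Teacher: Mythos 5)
Your proof is correct and is essentially the paper's own argument: the paper obtains the corollary precisely by choosing $\mathfrak S=\nK_0(E)$ in the co-Schwartz definition and invoking Proposition~\ref{prop:cpembed} with the resulting larger compact disk, exactly as you do. Your bookkeeping point---that compact disks in the complete space $E$ are Banach disks, so the ``compact extension to the completions'' is just the canonical injection $E_K\hookrightarrow E_C$ itself---is accurate and merely makes explicit what the paper leaves implicit.
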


\begin{corollary}
Let $K\in\nK_0(E)$. If there exists a bounded closed disk $B\subseteq E$ containing $K$ and such that the canonical injection $J\colon E_K\rightarrow E_B$ is compact, then there is also a compact disk $K_0\in\nK_0^s(E)$ containing $K$.
\end{corollary}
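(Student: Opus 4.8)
The plan is to reproduce the proof of Proposition~\ref{prop:cpembed} essentially verbatim, after the observation that compactness of the enclosing disk $B$ is never actually used there: the argument only needs that $(E_B,\|\cdot\|_B)$ is a Banach space and that the canonical injection $\imath\colon E_B\hookrightarrow E$ is continuous. Both facts persist when $B$ is merely a bounded closed disk: completeness of $E$ (assumption (S2)) makes $E_B$ a Banach space, and boundedness of $B$ in $E$ yields continuity of $\imath$.

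First I would push the problem into the Banach space $E_B$. Since $J\colon E_K\to E_B$ is compact and $K$ is the closed unit ball of $E_K$, the set $J(K)=K$ is precompact in $E_B$; moreover $K$ is compact in $E$, hence closed in $E$, and so closed in $E_B$ by continuity of $\imath$. Therefore $K$ is in fact compact in $E_B$, i.e.\ $K\in\nK_0(E_B)$ with the inclusion $K\subseteq B$. Next I would invoke Proposition~\ref{prop:bacls}, applicable because the Banach space $E_B$ is Fr\'echet: the family $\nK_0^s(E_B)$ is fundamental in $\nK_0(E_B)$, so there exists $K_0\in\nK_0^s(E_B)$ with $K\subseteq K_0$.

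Finally I would transport $K_0$ back to $E$. Separability of $E_{K_0}$ with respect to $\|\cdot\|_{K_0}$ is intrinsic, since both $E_{K_0}=\bigcup_{n\in\NN} nK_0$ and its gauge do not depend on the ambient space; and compactness of $K_0$ in $E_B$ transfers to $E$ through the continuous injection $\imath$. Hence $K_0\in\nK_0^s(E_B)\subseteq\nK_0^s(E)$ is a compact separable Banach disk containing $K$, as required. The only delicate point---and the place where one is tempted to believe compactness of $B$ is indispensable---is the passage between the norm topology of $E_B$ and the topology of $E$; this is settled purely by continuity of $\imath$, so that no compactness of $B$ enters.
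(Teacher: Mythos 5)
Your proposal is correct and takes essentially the same route as the paper: the paper's proof likewise observes that $K$ is compact in the Banach space $E_B$ and then applies Proposition~\ref{prop:bacls} inside $E_B$ to produce $K_0\in\nK_0^s(E_B)\subseteq\nK_0^s(E)$ with $K_0\supseteq K$, transported back to $E$ via the continuous injection. Your explicit verifications (that completeness of $E$ makes $E_B$ a Banach space for a merely closed bounded disk, and that $K$ is closed in $E_B$ hence compact there) only spell out what the paper's one-line proof leaves implicit.
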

\begin{proof}
In view of Proposition~\ref{prop:bacls}, $K$ is compact in some Banach space $E_B$, $B\in\nB_0(E)$ and there is a compact disk $K_0\in \nK_0^s(E_B)\subseteq \nK_0^s(E)$ with $K_0\supseteq K$. 
\end{proof}
The property of $B$ being a Banach disk only depends on duality. In fact, the factorisation theorem in Proposition~\ref{prop:bacls} tells that the property of $E$ being a $\nK_0$-co-Schwartz space only depends on duality as the compact operator $\imath\colon  E_K\hookrightarrow E_B$ can be factored by two consecutive compact operators $E_K\hookrightarrow E_{K_0}\hookrightarrow E_B$ and $K$ is compact in $E_{K_0}$. This yields
\begin{corollary}\label{cor:compatibleqcSLS}
Let $(E,\t)$ be a locally convex space and $E'=(E,\t)'$. If on $E$ there is a $\nK_0$-co-Schwartz locally convex topology $\t'$ which is compatible with duality $\<E,E'\>$, then $(E,\t)$ is $\nK_0$-co-Schwartz. In this case, the family $\nK_0^s(E)$ is fundamental. Furthermore, then one can always choose compact sets from $\nK_0^s(E,\mu(E,E'))$, i.e., separable compact Banach disks in the Mackey topology.
\end{corollary}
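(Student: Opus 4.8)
The plan is to deduce all three assertions from the single structural fact, announced just above, that being a $\nK_0$-co-Schwartz space is an invariant of the dual pairing $\langle E,E'\rangle$ rather than of the particular compatible topology. Granting this, the statement is essentially a dictionary translation: since both $\t$ and $\t'$ are compatible with $\langle E,E'\rangle$ and $(E,\t')$ is $\nK_0$-co-Schwartz, the space $(E,\t)$ is $\nK_0$-co-Schwartz as well, which is the first assertion; the fundamentality of $\nK_0^s(E)$ is then exactly Corollary~\ref{cor:qcSCLS}; and applying the same invariance to the finest compatible topology, the Mackey topology $\mu(E,E')$, will give the concluding refinement. So the real work is to justify the invariance and, for the Mackey part, to locate the separable Banach disks inside $\nK_0^s(E,\mu(E,E'))$.

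To prove the invariance I would first record that the bounded absolutely convex sets coincide for all topologies compatible with $\langle E,E'\rangle$ (Mackey's theorem on bounded sets). Hence the family $\nB_0$, and for each $B\in\nB_0$ the normed space $(E_B,\|\cdot\|_B)$, depend only on the duality, and so does the \emph{compactness} of a linking map $E_B\to E_C$: for $B\subseteq C$ this merely says that the disk $B$ is relatively $\|\cdot\|_C$-compact, a statement about the two fixed Banach spaces with no reference to $\t$. Next I would rewrite the co-Schwartz condition through the factorization of Proposition~\ref{prop:bacls}: a compact inclusion $E_K\hookrightarrow E_B$ splits as $E_K\hookrightarrow E_{K_0}\hookrightarrow E_B$ with $E_{K_0}$ a reflexive separable Banach space, $K$ compact in $E_{K_0}$, and unit ball $K_0$ that is $\sigma(E,E')$-compact. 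Thus $\nK_0$-co-Schwartz is equivalent to the requirement that every weakly compact disk be contained in such a reflexive separable disk $K_0$ with $E_K\to E_{K_0}$ compact -- a condition phrased entirely in terms of the weak topology $\sigma(E,E')$ and the duality-determined Banach spaces, and therefore independent of $\t$.

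The hard part will be the matching of the abstract notion \emph{weakly compact disk} with the concrete \emph{compact disk of $(E,\t)$} that enters the definition of $\nK_0(E)$: a $\t$-compact disk is automatically $\sigma(E,E')$-compact, but the converse fails in general, so I must verify that the reflexive balls $K_0$ produced above are genuine members of $\nK_0(E,\t)$ and, for the last assertion, of $\nK_0(E,\mu(E,E'))$. Here I would invoke completeness of $\t$ together with the compactness of the embedding $E_{K_0}\hookrightarrow E$: the image of $K_0$ is $\t$-precompact while $K_0$ is weakly (hence $\t$-) closed, so completeness forces $K_0\in\nK_0(E,\t)$, exactly as in the final step of the proof of Proposition~\ref{prop:bacls}. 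For the Mackey refinement I would use that the injection $E_{K_0}\hookrightarrow(E,\t)$, being continuous, is weakly continuous and therefore continuous into $(E,\mu(E,E'))$ (weakly continuous linear maps are Mackey continuous), and that a compact map out of the reflexive space $E_{K_0}$ is completely continuous; combined with the same completeness argument this places $K_0$ in $\nK_0^s(E,\mu(E,E'))$. Feeding this back into Proposition~\ref{prop:cpembed} and Corollary~\ref{cor:qcSCLS} then yields assertions two and three.
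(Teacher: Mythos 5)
There is a genuine gap at the pivot of your argument: the claimed \emph{equivalence} of the $\nK_0$-co-Schwartz property with the condition that \emph{every weakly compact disk} be contained in a separable reflexive disk $K_0$ with compact linking map. The co-Schwartz definition quantifies over the $\t$-compact disks, and this family genuinely depends on the compatible topology, so the direction ``$(E,\t')$ co-Schwartz $\Rightarrow$ factorisation for all $\sigma(E,E')$-compact disks'' is false. Concretely, $\ell^2$ with its norm topology (which is the Mackey topology) is $\nK_0$-co-Schwartz by Corollary~\ref{cor:frechetqcS}, yet its closed unit ball $B$ is a weakly compact disk admitting no such factorisation: any disk $C\supseteq B$ is bounded (compact disks for any compatible topology are bounded, and boundedness is a duality invariant), hence $B\subseteq C\subseteq\lambda B$, so $E_C=\ell^2$ with an equivalent norm and $E_B\hookrightarrow E_C$ is an isomorphism of infinite-dimensional spaces, not compact. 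Consequently, when you start from $K\in\nK_0(E,\t)$ --- which is weakly compact but in general \emph{not} $\t'$-compact --- the co-Schwartz hypothesis on $\t'$ gives you nothing to apply. You correctly flagged the disk-matching problem on the output side (membership of $K_0$ in $\nK_0(E,\t)$), but the essential difficulty sits on the input side, and your reformulation hides it behind an equivalence that is provably wrong in one direction; the paper's own short justification preceding the corollary implicitly applies the hypothesis to a disk in $\nK_0(E,\t')$, whereas your version converts this into an explicit false statement.

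The output side also deviates from the paper's mechanism and invokes ingredients you do not have. You appeal to ``compactness of the embedding $E_{K_0}\hookrightarrow E$'' together with completeness of $(E,\t)$, but the corollary assumes only a locally convex $(E,\t)$, and the DFJP step, executed inside $(E,\t')$, yields compactness only into $E_B$ for a $\t'$-compact $B$ --- hence only $\t'$-precompactness of $K_0$; similarly, your final step tacitly needs a completeness property of $(E,\mu(E,E'))$ that is nowhere granted. The paper argues differently: in the factorisation $E_K\hookrightarrow E_{K_0}\hookrightarrow E_B$ \emph{both} factors are compact, so $K_0$ is $\|\cdot\|_B$-compact in the Banach space $E_B$; since $B$ is bounded and boundedness depends only on the duality $\<E,E'\>$, the injection $E_B\hookrightarrow(E,\tau)$ is continuous for \emph{every} topology $\tau$ compatible with $\<E,E'\>$, so that $K_0$ is compact in all of them simultaneously, in particular Mackey-compact. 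This is precisely how the conclusion $K_0\in\nK_0^s(E,\mu(E,E'))$ is reached, with no completeness argument at all; your route via ``weakly continuous maps are Mackey continuous'' and ``compact maps out of reflexive spaces are completely continuous'' does not by itself produce Mackey compactness of $K_0$ and should be replaced by this bounded-disk argument.
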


\begin{corollary}\label{cor:frechetqcS}
Fr\'echet spaces are $\nK_0$-co-Schwartz.
\end{corollary}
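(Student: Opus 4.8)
The plan is to reduce the statement to the single nontrivial ingredient that already powers the proof of Proposition~\ref{prop:bacls}, namely the lemma of \cite[Lemma p.~18]{nuclearconuclear}: in a Fr\'echet space, every compact disk is a compact subset of the Banach space generated by some larger compact disk. Once this is granted, the $\nK_0$-co-Schwartz property follows formally, because a bounded operator between Banach spaces is compact precisely when it sends the closed unit ball to a relatively compact set, and for a compact disk $K$ the closed unit ball of $(E_K,\|\cdot\|_K)$ is exactly $K$ (it is absolutely convex and closed).

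First I would fix a Fr\'echet space $E$ and an arbitrary $K\in\nK_0(E)$; the goal is to exhibit some $C\in\nK_0(E)$ with $C\supseteq K$ for which the canonical injection $J_{KC}\colon E_K\to E_C$ is compact. Applying the cited lemma yields $A\in\nK_0(E)$ such that $K$ is compact in the Banach space $(E_A,\|\cdot\|_A)$. In particular $K$ is $\|\cdot\|_A$-bounded, so $K\subseteq \rho A$ for some $\rho\geq 1$; set $C:=\rho A\in\nK_0(E)$. This $C$ satisfies $C\supseteq K$ and has $E_C=E_A$ as sets with an equivalent norm, so $K$ remains compact in $E_C$.

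It then remains to identify $J_{KC}$ as a compact operator. Since $K\subseteq C$ one has $E_K\subseteq E_C$ and $\|x\|_C\leq \|x\|_K$, so the natural injection $J_{KC}\colon E_K\to E_C$ is a well-defined contraction. Its restriction to the closed unit ball $K$ of $E_K$ has image $K$, which is compact in $E_C$ by construction; hence $J_{KC}$ maps the unit ball to a relatively compact set and is compact. Because $K$ and $C$ are compact (hence closed and bounded) disks in the complete space $E$, both $E_K$ and $E_C$ are already Banach spaces, coinciding with their own completions, so $J_{KC}$ is itself the required compact extension. This verifies, for the system $\mathfrak{S}=\nK_0(E)$, the existence of a larger member $C$ with $J_{KC}$ compact, which is exactly the $\nK_0$-co-Schwartz condition.

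The entire argument is bookkeeping about gauges and the unit-ball characterisation of compact operators, so I do not expect any genuine obstacle there. The one substantive point -- and the place where the Fr\'echet hypothesis (metrizability together with completeness) is actually used -- is the existence of the dominating disk $A$ with $K$ compact in $E_A$, which is supplied entirely by \cite{nuclearconuclear}. Accordingly, the ``hard part'' has already been isolated and invoked, and the corollary is the formal shadow of that lemma.
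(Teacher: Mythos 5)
Your proof is correct and follows exactly the route the paper leaves implicit: Corollary~\ref{cor:frechetqcS} is stated without proof, and its intended justification is precisely the first step of the proof of Proposition~\ref{prop:bacls} --- the lemma from \cite[Lemma p.~18]{nuclearconuclear} giving a compact disk $A$ with $K$ compact in $E_A$ --- combined with the observation that the closed unit ball of $(E_K,\|\cdot\|_K)$ is $K$ itself, so the canonical injection into $E_{\rho A}$ is compact (the DFJP machinery in Proposition~\ref{prop:bacls} is only needed for separability, not for the co-Schwartz property). Your two bookkeeping points --- scaling $A$ to $C=\rho A\supseteq K$, and noting that compact disks in the complete space $E$ are Banach disks so that no passage to completions is required --- are both handled correctly.
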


\begin{proposition}\label{prop:qccS}
Co-Schwartz spaces are $\nK_0$-co-Schwartz.
\end{proposition}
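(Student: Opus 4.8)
The plan is to reduce everything to the Fréchet (indeed Banach) case already settled in Proposition~\ref{prop:bacls}. The only real gap is that the co-Schwartz hypothesis ($\mathfrak S=\nB_0(E)$) provides, for a given disk, a larger \emph{closed} disk with a compact linking map, whereas $\nK_0$-co-Schwartz ($\mathfrak S=\nK_0(E)$) demands a larger \emph{compact} disk. The structural input that closes this gap is completeness of $E$: for a closed bounded disk $B$, quasi-completeness (which completeness implies, since closed bounded sets of a complete space are complete) makes $B$ a Banach disk and $(E_B,\|\cdot\|_B)$ a genuine Banach space continuously injected into $E$. This lets me run the DFJP factorisation of Proposition~\ref{prop:bacls} \emph{inside} $E_B$.

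First I would fix $K\in\nK_0(E)$. A compact disk is in particular a closed bounded disk, so $K\in\nB_0(E)$, and the co-Schwartz property yields a closed disk $B\supseteq K$ for which the canonical injection $E_K\to E_B$ is compact. Since $E$ is complete, hence quasi-complete, $E_B$ is a Banach space. Compactness of $E_K\to E_B$ means that the closed unit ball $K$ of $E_K$ is relatively compact in $E_B$; let $K':=\overline{K}^{\,E_B}$ be its closure in $E_B$. Then $K'$ is an absolutely convex compact subset of $E_B$, and, $E_B\hookrightarrow E$ being continuous, $K'\in\nK_0(E)$ with $K\subseteq K'$.

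Next I would apply Proposition~\ref{prop:bacls} inside the Banach (hence Fréchet) space $E_B$ to the compact disk $K'\in\nK_0(E_B)$: the family $\nK_0^s(E_B)$ is then fundamental in $\nK_0(E_B)$, so there is $K_0\in\nK_0^s(E_B)$ with $K'\subseteq K_0$. Inspecting the construction in that proof, $K_0$ is the closed unit ball of a closed separable (reflexive) subspace $L=E_{K_0}$ of $E_B$ in which $K'$ is already compact, so that $K'$ is compact in $E_{K_0}$. Moreover $\nK_0^s(E_B)\subseteq\nK_0^s(E)$, since both compactness and $\|\cdot\|$-separability are preserved by the continuous injection $E_B\hookrightarrow E$; thus $K_0\in\nK_0^s(E)\subseteq\nK_0(E)$.

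Finally, since $K\subseteq K'$ and $K'$ is compact in $E_{K_0}$, the unit ball $K$ of $E_K$ is relatively compact in $E_{K_0}$, i.e.\ the canonical injection $E_K\to E_{K_0}$ is compact. Thus for the arbitrary compact disk $K$ I have produced a larger compact disk $K_0\in\nK_0(E)$ with a compact linking map, which is precisely the $\nK_0$-co-Schwartz property. The step I expect to be the main obstacle (and the one genuinely using the standing completeness assumption rather than mere sequential completeness of the disks) is the passage from the \emph{closed} disk $B$ furnished by co-Schwartz to the Banach space $E_B$ in which Proposition~\ref{prop:bacls} can be invoked verbatim; everything after that is the same reflexivity-and-weak-compactness argument already carried out there.
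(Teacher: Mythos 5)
Your proof is correct and follows essentially the same route as the paper: reduce to the Banach space $E_B$ spanned by the closed disk furnished by the co-Schwartz property (complete because $E$ is quasi-complete), then invoke the Fr\'echet/Banach case, which you extract by inspecting Proposition~\ref{prop:bacls} and which the paper packages as Corollary~\ref{cor:frechetqcS}. The only minor difference is that you obtain quasi-completeness from the standing completeness assumption on $E$, whereas the paper cites the fact that co-Schwartz spaces are automatically quasi-complete, so its argument does not use completeness of $E$ at all.
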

\begin{proof}

If $E$ is a co-Schwartz space, it is quasi-complete, cf.\ \cite[Chapter~1, Theorem~(4d)]{nuclearconuclear}. Let $K\in\nK_0(E)$. It suffices to show that there exists $B\in\nK_0(E)$ such that the canonical embedding $J_{KB}$ is compact. For $K$ one finds a larger (not necessarily compact) disk $C$ such that the extension of the canonical embedding  $J_{KC}$ is compact. Without loss of generality $C$ is closed (e.g. take the closure of a suitable disk), so we assume that $C$ be closed, thus complete by quasi- completeness of $E$. Its linear hull $E_C$ is a Banach space. In particular, it is $\nK_0$-co-Schwartz by Corollary~\ref{cor:frechetqcS}. Therefore, one finds a compact disk $B\supseteq K$ in $E_C$ (and therefore in $E$)  and the assertion follows.
\end{proof}

\begin{remark}
Interestingly, although we need our assumptions for different purposes, Dettweiler posed essentially the same two conditions in \cite[Section~3]{dettweilerstabile}: In $E$ there should exist a fundamental system $\nK_H^s$ of $\nK_H$ of compact \emph{Hilbert} disks ($E_K$ is a separable Hilbert space for all $K\in\nK_H^s$). A second condition requests that for every $K\in\nK_H^s$ there is an $L\in\nK_H^s$, $K\subseteq L$, such that $\imath\colon E_K\rightarrow E_L$ is compact, i.e., it is a $\nK_H^s$-co-Schwartz space.
\end{remark}

\paragraph{Acknowledgements}
I want to express my deepest gratitude to my PhD supervisor Stefan Geiss for all the fruitful discussions and for his careful reading and valuable contributions to this work.

\def\polhk#1{\setbox0=\hbox{#1}{\ooalign{\hidewidth
  \lower1.5ex\hbox{`}\hidewidth\crcr\unhbox0}}}
  \def\cftil#1{\ifmmode\setbox7\hbox{$\accent"5E#1$}\else
  \setbox7\hbox{\accent"5E#1}\penalty 10000\relax\fi\raise 1\ht7
  \hbox{\lower1.15ex\hbox to 1\wd7{\hss\accent"7E\hss}}\penalty 10000
  \hskip-1\wd7\penalty 10000\box7}

\end{document}